\documentclass[12pt,leqno]{article}

\usepackage{fullpage}

\usepackage{amsmath,mathrsfs,amsthm,rotating,amsxtra,multirow,graphicx}
\usepackage{txfonts,graphics,float,cancel,extarrows,mathtools}
\usepackage{amsmath, amssymb, amsfonts, amsthm,amscd}
\swapnumbers

\usepackage[all]{xy}

\def\XRow{\rm \,Row\,}
\def\XCol{\rm \,Col\,}

\newtheorem{theorem}{Theorem}[section]
\newtheorem{lemma}[theorem]{Lemma}
\newtheorem{proposition}[theorem]{Proposition}
\newtheorem{corollary}[theorem]{Corollary}

\newtheoremstyle{definition}{3pt}{2pt}{\fontshape{rm}}{}{\bfseries}{.}{ }{}
\theoremstyle{definition}
\newtheorem{definition}[theorem]{Definition}
\newtheorem{remark}[theorem]{Remark}
\newtheorem{conjecture}[theorem]{Conjecture}
\newtheorem{exam*}[theorem]{Example}

\newtheoremstyle{nonum}{}{}{}{}{\bfseries}{.}{ }{#1#3}
\theoremstyle{nonum}

\newtheorem{exams*}{Examples}
\newtheorem{rem*}{Remark}

\newtheoremstyle{newnonum}{}{}{\it}{}{\bfseries}{.}{ }{#1#3}
\theoremstyle{newnonum}
\newtheorem{lemma*}{Lemma}

\newcommand{\beqn}{\begin{equation}}
\newcommand{\eeqn}{\end{equation}}

\DeclareMathOperator{\SL}{SL}
\DeclareMathOperator{\GL}{GL}

\def\bigdot{\bullet}
\DeclareMathOperator{\Perm}{Perm}
\DeclareMathOperator{\End}{End}

\begin{document}

\title{A Study of the representations supported by the orbit closure
  of the determinant}

\author{Shrawan Kumar}

\maketitle

\section{Introduction}

Let  ${\mathfrak v}$ be a complex vector space of dimension $m$  and let $E:=
\End {\mathfrak v}$.  Consider $\mathscr{D}\in Q:=S^m(E)^*$, where $\mathscr{D}$ is the function taking
determinant of any $X\in \End {\mathfrak v}$.
Fix a basis
 $\{ v_1, \dots, v_m \}$ of ${\mathfrak v}$ and a positive integer $n<m$ and
 consider the
 function $\mathscr{P}\in Q$, defined by $\mathscr{P}(X) =
x^{m-n}_{1,1} \Perm (X^o),$ $X^o$ being the component of $X$ in the right
 down $n\times n$
corner, where any element of $\End {\mathfrak v}$ is represented by
a $m\times m$-matrix $X=(x_{i,j})_{1\leq i,j,\leq m}$ in the basis
$\{v_i\}$ and Perm denotes the permanent. The group $G=GL(E)$
canonically acts on $Q$. Let $\mathcal{X}$ (resp. $\mathcal{Y}$) be
the $G$-orbit closure of $\mathscr{D}$ (resp. $\mathscr{P}$) inside
$Q$. Then, $\mathcal{X}$ and $\mathcal{Y}$ are closed (affine)
subvarieties of $Q$ which are stable under the standard homothety
action of $\mathbb{C}^*$ on $Q$. Thus, their affine coordinate rings
$\mathbb{C}[\mathcal{X}]$ and $\mathbb{C}[\mathcal{Y}]$ are
nonnegatively graded $G$-algebras over the complex numbers
$\mathbb{C}$. Clearly, $\mathscr{D}\odot \End E\subset \mathcal{X}$,
where $\End E$ acts on $Q$ on the right via: $(q\odot A)(X)=q(A\cdot
X)$, for $A\in \End E, q\in Q$ and $X\in E$.

For any positive integer $n$, let $\bar{m}=\bar{m}(n)$ be the smallest
positive integer such that the permanent of any $n\times n$ matrix can be
realized as a linear projection of the determinant of a $\bar{m}\times \bar{m}$
matrix. This is equivalent to saying that $\mathscr{P} \in \mathscr{D}\odot \End E
$ for the pair
$(\bar{m},n)$. Then,  Valiant conjectured that the function $\bar{m}(n)$ grows faster than
any polynomial in $n$ (cf. \cite{V}).

Similarly, let $m=m(n)$ be the smallest integer such that $\mathscr{P}\in \mathcal{X}$
(for the pair $(m,n)$). Clearly, $m(n)\leq \bar{m}(n)$. Now,
Mulmuley-Sohoni strengthened Valiant's conjecture. They conjectured that,
in fact, the function $m(n)$ grows faster than
any polynomial in $n$ (cf.\cite {MS1}, \cite{MS2} and the references therein). They further
conjectured that
if $\mathscr{P}\notin \mathcal{X}$, then there exists an irreducible $G$-module
which occurs in
$\mathbb{C}[\mathcal{Y}]$ but does not occur in $\mathbb{C}[\mathcal{X}]$. (Of
course, if $\mathscr{P}\in \mathcal{X}$,
then $\mathbb{C}[\mathcal{Y}]$ is a $G$-module quotient of
$\mathbb{C}[\mathcal{X}]$.) This Geometric Complexity Theory
programme initiated by Mulmuley-Sohoni provides a significant mathematical approach to
solving the Valiant's conjecture (in fact, strengthened version of
Valiant's conjecture proposed by them).

By \cite[Theorem 5.2]{K}, if an irreducible $G$-module $V_{E}(\lambda)$
(with highest weight $\lambda$) appears in $\mathbb{C}[\mathcal{Y}]$,
then $V_{E}(\lambda)$ is a polynomial representation of $G$ given by a
partition
$$
\lambda:\left(\lambda_{1}\geq \lambda_{2}\geq \ldots\geq
\lambda_{n^{2}+1}\geq 0\geq \ldots \geq 0\right)
$$
with last $m^{2}-(n^{2}+1)$ zeroes.

From now on (in this Introduction), we assume that $m$ is even. Our principal
result in this paper (Corollary \ref{coro3.2}) asserts
that for  any partition $\mu : (\mu_1\geq \ldots \geq
\mu_{m}\geq 0\geq \ldots \geq 0)$ with last $m^{2}-m$ zeroes, the
irreducible $G$-module $V_{E}(m\mu)$ appears in
$\mathbb{C}[\mathcal{X}]$ with nonzero multiplicity, provided the column Latin
$(m,m)$-square conjecture is valid (cf. Conjecture \ref{conj16}). In particular,
if $m\geq n^{2}+1$, for any irreducible representation $V_{E}(\lambda)$
appearing in $\mathbb{C}[\mathcal{Y}]$, $V_{E}(m\lambda)$ appears in
$\mathbb{C}[\mathcal{X}]$ (again asuming the validity of the column Latin
$(m,m)$-square conjecture). Thus, finding an irreducible representation
in $\mathbb{C}[\mathcal{Y}]$ which does not occur in
$\mathbb{C}[\mathcal{X}]$ (on which the success of the  Mulmuley-Sohoni
programme relies)
 for
$m\geq n^{2}+1$ is not so easy. As a consequence of our Corollary \ref{coro3.2},
we deduce that
the symmetric Kronecker coefficient
$sk_{m\overline{\lambda},d{\delta}_{m},d{\delta}_{m}}>0
$ for any partition $\overline{\lambda}:\left(\overline{\lambda}_{1}\geq
\overline{\lambda}_{2}\geq \cdots\geq
\overline{\lambda}_{m}\geq 0\right)$ of $d$, where
${\delta}_{m}$ is the partition
${\delta}_{m}:(1\geq 1\geq \cdots \geq 1)$ ($m$ factors)
(cf. Corollary \ref{coro3.5}).

Here is the content of this paper:

Section 2: By a result of
Howe (cf. Corollary \ref{coro2.4}), for any fundamental weight $\delta_{i}$ $(1\leq i\leq
m^{2}=\dim E)$ of $GL(E)$, the irreducible $GL(E)$-module
$V_{E}(d\delta_{i})$, for $0<d<m$, does not occur in
$S^{\bigdot}(S^{m}(E))$, whereas $V_{E}(m\delta_{i})$ occurs with
multiplicity one in $S^{\bigdot}(S^{m}(E))$. In fact, it occurs in
$S^{i}(S^{m}(E))$.
We give an explicit construction of the highest weight vector
$P_{i}=\gamma_{m,i}$ in this unique copy of $V_{E}(m\delta_{i})$ in
$S^{i}(S^{m}(E))$ (cf. Definition \ref{def1}).

Section 3: For any $1 \leq i \leq m$, we calculate $\gamma_{m,i}$ on a certain subset $\theta(M(m,i))$ of
$\mathcal{X}$ given by a morphism $\theta: M(m,i) \to \mathcal{X}$, where
$M(m,i)$ denotes the set of $m\times i$-matrices. The induced map $\theta^*$
on the level of affine coordinate rings is identified with a certain very explicit map $\varphi$.
The main result of this section is Proposition \ref{prop8}, which asserts that
$\gamma_{m,i}$ restricted to the image $\theta (M(m,i))$ is nonzero if and only if
the $\GL(V_{m})$-submodule $U_i$ generated by $v_{o}^{\otimes i}\in
S^{i}(S^{m}(V^{*}_{m}))$ intersects
the  isotypic
component $\mathcal{I}_{m\delta_i}$ of
$S^i(S^{m}(V^{*}_{m}))$ corresponding to the irreducible $\GL(V_{m})$-module
$V_m(m\delta_i)^*$ nontrivially,
where the element $v_o$ is defined by the identity \eqref{e6}.

In Section 4, we recall the Latin squares (more generally Latin rectangles) and state
the celebrated {\it Latin square conjecture} due to Alon-Tarsi and an equivalent
formulation due to Huang-Rota called the {\it column Latin square conjecture}.
We recall that the Latin square conjecture is known to be true for $p-1$ as well
as $p+1$ for any odd prime $p$; in particular, it is true for any even integer
up to $24$ (cf. Remark \ref{rem18}).

Section 5 is devoted to proving that the validity of the column Latin square
conjecture implies that the isotypic component $\mathcal{I}_{m\delta_i}$ of
$S^i(S^{m}(V^{*}_{m}))$ corresponding to the irreducible $\GL(V_{m})$-module
$V_m(m\delta_i)^*$ intersects the $\GL(V_{m})$-submodule $U_i$ generated by
$v_{o}^{\otimes i}$ nontrivially (cf. Theorem \ref{thm23}). In fact, for $i=m$,
 we show that the latter assertion is equivalent to the column Latin square
conjecture.

This sets the stage for the proof of our main theorem  (cf. Theorem \ref{thm3.1}),
which asserts that the irreducible module $V_E(m\delta_i)$ occurs in
$\mathbb{C}[\mathcal{X}]$ with multiplicity one for any $1\leq i\leq m$ if the
column Latin square conjecture is true for $m\times m$ Latin squares. This is shown
by proving that $P_{i}$
does not vanish identically on  $\mathcal{X}$. As an immediate
 corollary (cf. Corollary \ref{coro3.2}), we deduce that for any partition $\lambda: \lambda_1 \geq \dots \geq
 \lambda_m\geq 0, V_E(m\lambda)$ occurs in $\mathbb{C}[\mathcal{X}]$ (if the
column Latin square conjecture is true for $m\times m$ Latin squares).

Finally, in Remark \ref{rem5.4} (b), we observe that $V_{E}(m\delta_{i})$
(for any $1\leq i\leq m$)
occurs in $\mathbb{C}[\overline{GL(E)\cdot \mathfrak{P}}]$ with multiplicity one,
where $\mathfrak{P}$ is the function $E\to \mathbb{C}$
  taking any matrix $A\in E=\End \mathfrak{v}$ to its permanent. (Of course, as
  mentioned above, $V_E(d\delta_i)$, for any $0<d<m$ and $1\leq i\leq m^2$,
  does not occur in
$S^{\bigdot}(S^{m}(E))$, and hence  it does not occur in
$\mathbb{C}[\overline{GL(E)\cdot \mathfrak{P}}]$ or in $\mathbb{C}[\mathcal{X}]$.

{\it For any vector space $W$ over the complex numbers, in this paper, we view
$S^k(W)$ as the subspace of $\otimes^k W$ consisting of symmetric tensors.}

  \vspace{6mm}
  \noindent
  {\bf Acknowledgements:}  I am grateful to P. B\"urgisser and C. Ikenmeyer for pointing
  out a gap in an earlier version of the paper which resulted in this vastly
  modified version. The gap did not alter the main theorem \ref{thm3.1}, however
  unfortunately now the validity of the theorem is proved only under the hypothesis
  that the Latin Square Conjecture is valid.

  I thank J. Landsberg for many helpful
  conversations/correspondences; in particular, Section 5 is greatly influenced
  by discussions with him and Proposition \ref{lem13} (b) is due to him.

  Partial support from the NSF grants is gratefully acknowledged.

\section{An explicit realization of multiples of fundamental
  {\boldmath$GL(E)$}-representations in
  {\boldmath$S^{\bigdot}(S^{\bigdot}(E))$}}\label{sec2}

Let $E$ be a finite dimensional complex vector space with basis
$\{e_{1},\ldots,e_{\ell}\}$. Let $\delta_{i}$, $1\leq i\leq \ell$, be
the $i$-th fundamental weight of $GL(E)=GL(\ell)$. This corresponds to the partition
$ 1 \geq 1\geq \dots \geq 1$ ($i$-factors).

\begin{lemma}\label{lem2.1}
For any positive integers $d$, $j$ and $m$, the multiplicity of the
irreducible $GL(E)$-module $V_{E}(d\delta_{i})$ (with highest weight
$d\delta_{i}$) in $S^{j}(S^{m}(E))$ is the same as the multiplicity of
the irreducible $GL(E_{i})$-module $V_{E_{i}}(d\delta_{i})$ in
$S^{j}(S^{m}(E_{i}))$, where $E_{i}$ is the subspace of $E$ spanned by
$\{e_{1},\ldots,e_{i}\}$.

In fact, the highest weight vectors in
$S^{j}(S^{m}(E))$ for the irreducible
$GL(E)$-module $V_{E}(d\delta_{i})$ coincide with the highest weight
vectors in $S^{j}(S^{m}(E_{i}))$ for the irreducible $GL(E_{i})$-module
$V_{E_{i}}(d\delta_{i})$.
\end{lemma}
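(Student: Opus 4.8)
The plan is to exploit the fact that $d\delta_i$ is a partition whose diagram fits inside an $i\times\infty$ strip, so that the highest weight vectors ``live'' on the first $i$ basis vectors. Concretely, let $B\subset GL(E)$ be the Borel subgroup of upper-triangular matrices (with respect to the ordered basis $\{e_1,\dots,e_\ell\}$) and let $T\subset B$ be the diagonal torus. A highest weight vector of weight $d\delta_i$ in the $GL(E)$-module $S^j(S^m(E))$ is, by definition, a nonzero $B$-eigenvector of $T$-weight $d\delta_i=(d,\dots,d,0,\dots,0)$ ($i$ copies of $d$). The first step is to observe that any vector of $T$-weight $d\delta_i$ in the $GL(\ell)$-module $S^j(S^m(E))$ lies in the subspace $S^j(S^m(E_i))$: a monomial basis element of $S^j(S^m(E))$ is a product of $j$ degree-$m$ monomials in the $e_k$'s, and its $T$-weight has $k$-th coordinate equal to the total number of times $e_k$ occurs; for this weight to equal $d\delta_i$ we need $e_k$ to not appear at all for $k>i$, i.e.\ the element lies in $S^j(S^m(E_i))$. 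Hence the weight space $\bigl(S^j(S^m(E))\bigr)_{d\delta_i}$ equals $\bigl(S^j(S^m(E_i))\bigr)_{d\delta_i}$ as a vector space.

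The second step is to identify the highest weight vectors inside this common weight space. Decompose $GL(\ell)=GL(E)$ so that $GL(E_i)=GL(i)$ is the standard upper-left block, and let $B_i=B\cap GL(E_i)$, $U$ the unipotent radical of $B$. A vector $v\in \bigl(S^j(S^m(E))\bigr)_{d\delta_i}=\bigl(S^j(S^m(E_i))\bigr)_{d\delta_i}$ is a $GL(E)$-highest weight vector iff it is killed by every simple root vector $e_{\alpha_k}$, $1\le k\le \ell-1$ (root vectors $E_{k,k+1}$), and it is a $GL(E_i)$-highest weight vector iff it is killed by $e_{\alpha_1},\dots,e_{\alpha_{i-1}}$. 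So I must show that on a vector of weight $d\delta_i$ lying in $S^j(S^m(E_i))$, the operators $E_{k,k+1}$ for $k\ge i$ automatically act by zero. For $k>i$ this is clear, since $E_{k,k+1}$ raises by the weight $\epsilon_k-\epsilon_{k+1}$ which involves only indices $>i$, and it sends $S^j(S^m(E_i))$ to a subspace of $T$-weights that are nonnegative in coordinate $k$ — but $E_{k,k+1}$ replaces an $e_{k+1}$ by an $e_k$, and since our vector uses no $e_{k+1}$ (for $k+1>i$, indeed $k+1>i$ when $k\ge i$), the result is $0$. The one genuinely substantive case is $k=i$: the operator $E_{i,i+1}$ replaces an occurrence of $e_{i+1}$ by $e_i$, and since our vector involves no $e_{i+1}$, again $E_{i,i+1}v=0$ automatically. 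Thus the highest weight conditions for $GL(E)$ and for $GL(E_i)$ coincide on this weight space, giving an equality of the spaces of highest weight vectors.

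The third step is bookkeeping: the multiplicity of $V_E(d\delta_i)$ in $S^j(S^m(E))$ equals the dimension of the space of $GL(E)$-highest weight vectors of weight $d\delta_i$, and likewise the multiplicity of $V_{E_i}(d\delta_i)$ in $S^j(S^m(E_i))$ equals the dimension of the space of $GL(E_i)$-highest weight vectors of weight $d\delta_i$ in $S^j(S^m(E_i))$; here one should note $S^m(E_i)$ is genuinely a $GL(E_i)$-submodule of $S^m(E)$ and likewise for the $j$-th symmetric power, and that $d\delta_i$ is a dominant weight for both groups. By Steps 1 and 2 these two spaces of highest weight vectors are literally the same set, so the multiplicities agree, and the ``in fact'' clause — that the highest weight vectors themselves coincide — is exactly what Step 2 established. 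I expect Step 2, specifically making the $k=i$ case airtight, to be the only place requiring care; everything else is weight-counting. An alternative, cleaner packaging would be to invoke the general principle that for a Levi subgroup $L\subset GL(\ell)$ and a weight $\nu$ supported on the ``$L$-part,'' the $\nu$-weight space of any polynomial $GL(\ell)$-module is fixed by the unipotent radical of the opposite parabolic, but the hands-on argument above is self-contained and suffices.
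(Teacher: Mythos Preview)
Your proposal is correct and follows essentially the same approach as the paper's proof: both arguments show that a vector of $T$-weight $d\delta_i$ in $S^j(S^m(E))$ must lie in $S^j(S^m(E_i))$, and then that the $B_E$-eigenvector condition and the $B_{E_i}$-eigenvector condition coincide on such vectors. The paper's version is slightly more compressed --- instead of checking the simple root vectors $E_{k,k+1}$ individually, it just observes that the line $\mathbb{C}v'$ is ``clearly'' $B_E$-stable (since $B_E$ preserves $E_i$ and acts on it through $B_{E_i}$) --- but the content is the same.
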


\begin{proof}
Let $B_{E}$ be the standard Borel subgroup of $GL(E)$ consisting
of all the invertible upper triangular matrices (with respect to
the basis $\{e_{1},\ldots,e_{\ell}\}$). Let $v\in S^{j}(S^{m}(E))$ be a
$B_{E}$-eigenvector of weight $d\delta_{i}$. Then, clearly $v\in
S^{j}(S^{m}(E_{i}))$ and $v$ is a $B_{E_{i}}$-eigenvector of weight
$d\delta_{i}$. Conversely, let $v'\in S^{j}(S^{m}(E_{i}))$ be a
$B_{E_{i}}$-eigenvector of weight $d\delta_{i}$. Then, the line
$\mathbb{C}v'$ is clearly stable under $B_{E}$. Moreover, the vector
$v'$ is a weight vector of weight $d\delta_{i}$ with respect to the
standard maximal torus $T_{E}$ (consisting of invertible diagonal
matrices) of $GL(E)$. This proves the lemma.
\end{proof}

\begin{corollary}\label{coro2.2}
With the notation as above, the multiplicity $\mu_{E}(d\delta_{i})$ of
$V_{E}(d\delta_{i})$ in $S^{j}(S^{m}(E))$ is equal to the dimension of
the invariant space $[S^{j}(S^{m}(E_{i}))]^{SL(E_{i})}$ if $di =
jm$. If $di\neq jm$, $\mu_{E}(d\delta_{i})=0$.
\end{corollary}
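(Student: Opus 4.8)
The plan is to reduce the computation of the multiplicity $\mu_E(d\delta_i)$ to an $\SL$-invariant count by combining Lemma~\ref{lem2.1} with a standard highest-weight/invariant-vector correspondence, after first pinning down the necessary numerical constraint $di = jm$. First I would observe that $S^j(S^m(E))$ is a polynomial $GL(E)$-representation, all of whose weights have total degree $jm$ (each factor $S^m(E)$ is spanned by monomials of degree $m$ in the $e_k$, and we take a $j$-fold symmetric product). Since the highest weight $d\delta_i = (d,\dots,d,0,\dots,0)$ ($d$ repeated $i$ times) has total degree $di$, any occurrence of $V_E(d\delta_i)$ forces $di = jm$; this immediately gives the second assertion that $\mu_E(d\delta_i) = 0$ when $di \neq jm$.

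Next, assuming $di = jm$, I would invoke Lemma~\ref{lem2.1} to replace $E$ by $E_i = \langle e_1,\dots,e_i\rangle$, so that $\mu_E(d\delta_i)$ equals the multiplicity of $V_{E_i}(d\delta_i)$ in $S^j(S^m(E_i))$, and moreover the highest-weight vectors coincide. Now on $GL(E_i) = GL(i)$ the weight $d\delta_i$ is exactly $d$ times the determinant character $\delta_i$, i.e. it is the one-dimensional character by which the center acts and which is trivial on $SL(E_i)$. The key step is then the classical fact that, for a polynomial $GL(i)$-module $W$, the multiplicity of the irreducible module with highest weight $d\delta_i$ (a ``power of the determinant'') equals $\dim W^{SL(E_i)}$ restricted to the appropriate degree: a highest-weight vector for $d\delta_i$ is precisely a $B_{E_i}$-eigenvector on which the unipotent radical acts trivially and the torus acts by $d\delta_i$, but since $d\delta_i$ is trivial on $SL(E_i)$ and $SL(E_i)$ is generated by $B_{E_i}\cap SL(E_i)$ together with the opposite unipotent radical, such a vector is in fact $SL(E_i)$-invariant; conversely every $SL(E_i)$-invariant in $S^j(S^m(E_i))$ is a $T_{E_i}$-weight vector of weight $jm/i \cdot \delta_i = d\delta_i$ (using $di = jm$) and is killed by the unipotent radical (which lies in $SL(E_i)$), hence is a highest-weight vector of the required type. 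This establishes a bijection between a basis of highest-weight vectors for $V_{E_i}(d\delta_i)$ and a basis of $[S^j(S^m(E_i))]^{SL(E_i)}$, giving $\mu_E(d\delta_i) = \dim [S^j(S^m(E_i))]^{SL(E_i)}$.

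I expect the only subtlety—hardly an obstacle—to be the careful verification that ``$B_{E_i}$-eigenvector of weight $d\delta_i$'' is genuinely equivalent to ``$SL(E_i)$-invariant of the correct degree,'' which rests on $SL(E_i)$ being generated by the two opposite Borel subgroups and on $d\delta_i|_{SL(E_i)}$ being trivial; this is where the hypothesis $di = jm$ does its real work, ensuring the degree bookkeeping is consistent. Everything else is a direct consequence of Lemma~\ref{lem2.1} and the degree count, so the proof is short.
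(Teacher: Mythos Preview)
Your proposal is correct and matches the paper's (implicit) reasoning: the paper states Corollary~\ref{coro2.2} without proof, treating it as immediate from Lemma~\ref{lem2.1} together with the observation that a $B_{E_i}$-eigenvector of weight $d\delta_i$ is the same thing as an $\SL(E_i)$-invariant of the right degree.

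One small imprecision worth tightening: in the forward direction you argue that a highest-weight vector $v$ of weight $d\delta_i$ is $\SL(E_i)$-invariant because ``$\SL(E_i)$ is generated by $B_{E_i}\cap \SL(E_i)$ together with the opposite unipotent radical.'' Being fixed by $B_{E_i}\cap \SL(E_i)$ alone does not force invariance under $U^-$; what actually does the work is that the irreducible $GL(E_i)$-module $V_{E_i}(d\delta_i)$ is one-dimensional (it is the character $\det^d$), so the submodule generated by $v$ is the line $\mathbb{C}v$ itself, on which $\SL(E_i)$ necessarily acts trivially. With that one-line fix your argument is complete.
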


We recall the following result from \cite[Proposition 4.3]{H}.

\begin{proposition}\label{prop2.3} Let $E$ be a vector space of dimension $\ell$ as above.
For positive integers $j$, $m$, we have
\begin{itemize}
\item[\rm(a)] $[S^{j}(S^{m}(E))]^{SL(E)}=(0)$, if $0<j<\ell$

\item[\rm(b)] $[S^{\ell}(S^{m}(E))]^{SL(E)}\simeq
\begin{cases}
(0), & \text{if $m$ is odd}\\
\mathbb{C}, & \text{if $m$ is even.}
\end{cases}$
\end{itemize}
\end{proposition}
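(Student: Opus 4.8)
The plan is to reduce everything to computing, in each relevant degree, the multiplicity of a power $\det^{\otimes k}$ of the determinant character inside $S^j(S^m(E))$: an $SL(E)$-fixed vector $v$ in a polynomial $GL(E)$-module of degree $d$ generates (under $GL(E)=SL(E)\cdot\mathbb{C}^{*}$) a one-dimensional submodule isomorphic to $\det^{\otimes d/\ell}$, so $[S^j(S^m(E))]^{SL(E)}$ vanishes unless $\ell\mid jm$, and when $\ell\mid jm$ its dimension is the multiplicity of the rectangular module $V_E((k^\ell))$, $k=jm/\ell$, in $S^j(S^m(E))$.

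For part (a): every irreducible constituent $V_E(\lambda)$ of $S^j(S^m(E))$ already occurs in $S^m(E)^{\otimes j}=V_E((m))^{\otimes j}$ (as $S^j(S^m(E))$ is a direct summand of the tensor power in characteristic $0$), and iterating Pieri's rule — each tensoring by $S^m(E)=V_E((m))$ adds a horizontal strip of size $m$, hence increases the number of rows by at most one — forces $\lambda$ to have at most $j$ rows. If $0<j<\ell$ and $\ell\mid jm$, then $k=jm/\ell\ge 1$, so the rectangle $(k^\ell)$ has $\ell>j$ rows and cannot occur; hence $[S^j(S^m(E))]^{SL(E)}=0$. This settles (a).

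For part (b) we take $j=\ell$, so $k=m$ and we must find the multiplicity of $\det^{\otimes m}=V_E((m^\ell))$ in $S^\ell(S^m(E))$. First, the same Pieri analysis pins this down inside $S^m(E)^{\otimes\ell}$: after $t$ successive additions of size-$m$ horizontal strips the partition has $tm$ boxes, at most $t$ rows, and is contained in the final shape $(m^\ell)$, which forces it to be $(m^t)$; thus the only Pieri chain reaching $(m^\ell)$ is $\emptyset\subset(m)\subset(m^2)\subset\cdots\subset(m^\ell)$, and $V_E((m^\ell))$ occurs in $S^m(E)^{\otimes\ell}$ with multiplicity exactly one. Let $v$ span this copy, so $v$ is, up to scalar, the unique $SL(E)$-fixed vector of weight $m\delta_\ell$ in $S^m(E)^{\otimes\ell}$. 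Since $S^\ell(S^m(E))=(S^m(E)^{\otimes\ell})^{S_\ell}$ (the $S_\ell$ permuting the $\ell$ tensor factors), and $S_\ell$ preserves the line $\mathbb{C}v$ (it commutes with $GL(E)$), the multiplicity we want is $1$ if $S_\ell$ fixes $v$ and $0$ otherwise; everything thus reduces to the character of $S_\ell$ on $\mathbb{C}v$. To determine it I would write $v$ down explicitly: viewing the $\ell m$ tensor slots as an $\ell\times m$ array (rows $=$ the $\ell$ copies of $S^m(E)$, columns $=$ the $m$ slots inside one copy), antisymmetrize each of the $m$ columns over the $\ell$ rows — landing in $(\Lambda^\ell E)^{\otimes m}$, manifestly $SL(E)$-fixed — and then symmetrize each of the $\ell$ rows to land in $S^m(E)^{\otimes\ell}$. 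The result is $SL(E)$-fixed of weight $m\delta_\ell$; it is nonzero because the coefficient of $\bigotimes_{a=1}^{\ell}e_a^{\otimes m}$ in it is $1$ (only the all-identity term of the column-antisymmetrizations survives into that coordinate, before and after the row-symmetrization). A one-line reindexing of the $m$ column-antisymmetrizations then gives $\rho\cdot v=\sgn(\rho)^m\,v$ for all $\rho\in S_\ell$. Hence $v$ is $S_\ell$-fixed exactly when $m$ is even (using $\ell\ge 2$), so the multiplicity of $\det^{\otimes m}$ in $S^\ell(S^m(E))$ is $1$ if $m$ is even and $0$ if $m$ is odd; since $\det^{\otimes m}$ is one-dimensional this is precisely $[S^\ell(S^m(E))]^{SL(E)}$, proving (b).

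The only genuinely nontrivial step is in (b): extracting the multiplicity-one statement in $S^m(E)^{\otimes\ell}$ from Pieri is routine, but identifying the $S_\ell$-action on the unique copy of $\det^{\otimes m}$ is the crux. The explicit antisymmetrize-then-symmetrize construction of $v$, together with the easy nonvanishing test on $\bigotimes_a e_a^{\otimes m}$, is where the real content lies; once $v$ is available the sign bookkeeping $\rho\cdot v=\sgn(\rho)^m v$ is immediate. (An alternative route is to note that $m\mapsto\big(\text{character of }S_\ell\text{ on }\operatorname{Hom}_{GL(E)}(\det^{\otimes m},S^m(E)^{\otimes\ell})\big)$ is multiplicative in $m$ via the multiplication maps $S^m(E)\otimes S^{m'}(E)\to S^{m+m'}(E)$, reducing to $m=1$ where the copy of $\det$ in $E^{\otimes\ell}$ is the full antisymmetrizer and the character is visibly $\sgn$; but verifying that the relevant composite is nonzero costs about as much as the direct computation.)
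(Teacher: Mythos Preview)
Your argument is correct. The reduction of the $SL(E)$-invariants to the multiplicity of the rectangular module $V_E((k^{\ell}))$ is standard, the Pieri bound on the number of rows in the constituents of $(S^m E)^{\otimes j}$ disposes of part~(a), and in part~(b) the uniqueness of the Pieri chain $\emptyset\subset (m)\subset (m^2)\subset\cdots\subset (m^{\ell})$ correctly gives multiplicity one in $(S^m E)^{\otimes \ell}$. Your explicit vector $v$ (column-antisymmetrize, then row-symmetrize) is nonzero by the coefficient check you indicate, and the substitution $\sigma_b\mapsto \sigma_b\circ\rho^{-1}$ in each of the $m$ column antisymmetrizers indeed yields $\rho\cdot v=\sgn(\rho)^{m}\,v$, so $v$ is $S_{\ell}$-fixed precisely when $m$ is even (for $\ell\ge 2$, which is the relevant range here).

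As for the comparison with the paper: the paper does not prove this proposition at all; it simply records it as a citation of Howe \cite[Proposition~4.3]{H}, whose approach goes through $(GL_n,GL_m)$-duality and the symmetric plethysm decomposition. Your route is genuinely different and more elementary: it stays entirely within Pieri combinatorics and a direct hands-on construction of the invariant, avoiding the duality machinery. The trade-off is that Howe's framework gives more (a full description of the plethysm invariants in all degrees), whereas your argument is tailored to exactly the two statements needed and is self-contained. Either way, the later explicit realization of $\gamma_{m,i}$ in the paper (Definition~\ref{def1} and Lemma~\ref{lem2.6}) is in the same spirit as your construction of $v$, so your approach meshes well with what follows.
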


Combining Corollary \ref{coro2.2} with Proposition \ref{prop2.3},
together with the action of the center of $GL(E)$, we get the
following result.

\begin{corollary}\label{coro2.4} Let $E$ be a vector space of dimension $\ell$ as above.
Let $m$ be a positive even integer and let $l\leq i\leq \ell$. Let
$d$ be the smallest positive integer such that $V_{E}(d\delta_{i})$
occurs in $S^{\bigdot}(S^{m}(E))$ as a $GL(E)$-submodule. Then,
$d=m$. Moreover, $V_{E}(m\delta_{i})$ occurs in
$S^{\bigdot}(S^{m}(E))$ with multiplicity 1 and it occurs
in $S^{i}(S^{m}(E))$.
\end{corollary}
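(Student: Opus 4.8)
The plan is to deduce Corollary \ref{coro2.4} by assembling the three preceding results, tracking carefully how the center of $GL(E)$ acts so as to convert the $SL(E)$-invariant statements into statements about $GL(E)$-isotypic components. First I would recall that any irreducible $GL(E)$-module appearing in $S^j(S^m(E))$ is a polynomial representation of degree $jm$; in particular, for the scalar matrix $c\cdot \mathrm{Id}\in GL(E)$, the center acts on $S^j(S^m(E))$ by $c^{jm}$, whereas it acts on $V_E(d\delta_i)$ by $c^{di}$. Hence $V_E(d\delta_i)$ can occur in $S^j(S^m(E))$ only when $di=jm$, which is precisely the content of Corollary \ref{coro2.2}; this is the step that forces a matching between $j$ and $d$.

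Next I would invoke Lemma \ref{lem2.1} (or its Corollary \ref{coro2.2}) to reduce the multiplicity of $V_E(d\delta_i)$ in $S^j(S^m(E))$ to $\dim [S^j(S^m(E_i))]^{SL(E_i)}$, where $E_i$ is $i$-dimensional and $di=jm$. Now I apply Proposition \ref{prop2.3} with $\ell$ replaced by $i=\dim E_i$: part (a) tells me this invariant space is zero whenever $0<j<i$, and part (b) tells me that for $j=i$ it is zero if $m$ is odd and one-dimensional if $m$ is even. So under the hypothesis that $m$ is even, the smallest $j$ for which $[S^j(S^m(E_i))]^{SL(E_i)}\neq 0$ is $j=i$, and then the invariant space is exactly $\mathbb{C}$; for $j>i$ one would need to see that no further invariants appear, but in fact we do not need that — what we need is the \emph{smallest} $d$, so I only need the minimal $j$ with nonzero invariants.

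From $j=i$ and the constraint $di=jm=im$ I conclude $d=m$, so the smallest positive $d$ with $V_E(d\delta_i)\subset S^{\bigdot}(S^m(E))$ is $d=m$, and this occurrence is realized inside $S^i(S^m(E))$ with multiplicity $\dim[S^i(S^m(E_i))]^{SL(E_i)}=1$. It remains to check that $V_E(m\delta_i)$ occurs in $S^{\bigdot}(S^m(E))$ with total multiplicity one, not merely within the single graded piece $S^i(S^m(E))$: but the center-of-$GL(E)$ argument pins the degree, forcing any copy of $V_E(m\delta_i)$ to lie in $S^j(S^m(E))$ with $mi=jm$, i.e. $j=i$, so all copies live in $S^i(S^m(E))$ and the multiplicity-one count there is the total count. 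I expect the only genuinely delicate point is being careful that "smallest $d$" really does correspond to "smallest $j$ with $[S^j(S^m(E_i))]^{SL(E_i)}\neq0$" — i.e. that the bijection $di=jm$ is monotone in the right sense — and that the lemma's reduction to $E_i$ is applied with the dimension parameter in Proposition \ref{prop2.3} set to $i$ rather than $\ell$; once those bookkeeping matches are made, the corollary is immediate.
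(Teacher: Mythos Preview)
Your proposal is correct and follows exactly the approach the paper indicates: the paper simply says the corollary follows by ``combining Corollary \ref{coro2.2} with Proposition \ref{prop2.3}, together with the action of the center of $GL(E)$,'' and you have spelled out precisely those three ingredients, including the key bookkeeping point that Proposition \ref{prop2.3} must be applied with the dimension parameter equal to $i=\dim E_i$ rather than $\ell$. Your observation that the center argument pins any copy of $V_E(m\delta_i)$ to the single graded piece $S^i(S^m(E))$, so that the multiplicity-one count there is the total count, is exactly what is needed and is what the paper has in mind.
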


{\it From now on,  $m$ is an even positive
integer.}

\vskip1ex

We first give an explicit construction of the invariant
$[S^{i}(S^{m}(E_{i}))]^{SL(E_{i})}$ for any $1\leq i\leq \ell$. Recall
from Proposition \ref{prop2.3} that it is one dimensional.

\begin{definition} \label{def1}
{\bf (An explicit construction of}
  $[S^{i}(S^{m}(E_{i}))]^{SL(E_{i})}${\bf )}

Recall that $E_{i}$ has a basis $\{e_{1},\ldots,e_{i}\}$. Let $M(i,i)$
be the space of $i\times i$ matrices over $\mathbb{C}$. Define a
linear isomorphism
\[
\theta : \left(\otimes^{2}E_{i}\right)^{*}\xrightarrow{\sim} M(i,i),\,\,\,\,
\theta (f) = \left(\theta(f)_{p,q}\right)_{1\leq p, q\leq i},\]
where
$\theta(f)_{p,q}=f\left(e_{p}\otimes e_{q}\right),$  for any $f\in
  \left(\otimes^{2}E_{i}\right)^{*}.$

Let $GL(E_{i})$ act on $M(i,i)$ via
$$
g\cdot A=\left(g^{-1}\right)^{t}Ag^{-1},\,\,\,
\text{for}\,\, g\in GL(E_{i})\,\, \text{and} \,\,A\in M(i,i).$$

Then, $\theta$ is $GL(E_{i})$-equivariant.
Now, define the map (setting $m'=m/2$)
$$
\theta^{\otimes m'}:
\left(\otimes^{m}E_{i}\right)^{*}\xrightarrow{\sim}\otimes^{m'}(M(i,i))
$$
by identifying
$$
\left(\otimes^{m}E_{i}\right)^{*}\simeq
\left(\left(\otimes^{2}E_{i}\right)^{*}\right)
\displaystyle{\otimes\cdots\otimes}\left(\left(\otimes^{2}E_{i}\right)^{*}\right) \quad {(m'
  \ \text{factors})}
$$
and setting
$$
\theta^{\otimes m'}\left(f_{1}\otimes \cdots\otimes
f_{m'}\right)=\theta(f_{1})\otimes\cdots\otimes \theta(f_{m'}), \,\,\,
\text{for}\,\,  f_{k}\in (\otimes^{2}E_{i})^{*}.$$

For any finite dimensional vector space $W$ and nonnegative integer $k$,
let $\mathcal{P}^{k}(W)$ be the space of homogeneous polynomials of
degree $k$ on $W$, i.e.,
$$
\mathcal{P}^{k}(W)=\{f:W\to \mathbb{C}\text{~ such that~ }f(\lambda
w)=\lambda^{k}f(w) \ \forall \ w\in W\text{~ and~ }\lambda\in
\mathbb{C}\}.
$$

Then, there is a linear isomorphism (cf. [GW, Proposition B.2.4]).
$$
\xi : S^{k}(W)^{*}\xrightarrow{\sim}\mathcal{P}^{k}(W)
$$
defined by $\xi(\theta)(w)=\theta(w^{\otimes k})$, for $\theta\in
S^{k}(W)^*$ and $w\in W$.
If an algebraic group $G$ acts linearly on $W$, then $\xi$ is
$G$-equivariant.

Define the linear map $\bar{\xi}: \mathcal{P}^{k}(W)\to (\otimes^{k}W)^{*}$
by
$$
(\bar{\xi}(f))(w_{1}\otimes\dots \otimes w_{k})=\frac{1}{k!}~ (\text{the coefficient
  of } t_{1}\dots t_{k}\text{~ in~ } f(t_{1}w_{1}+\cdots+t_{k}w_{k})),
$$
for $f\in \mathcal{P}^{k}(W)$ and $w_{1},\dots,w_{k}\in W$. Then,
the inverse map
$$
\xi^{-1}:\mathcal{P}^{k}(W)\to S^{k}(W)^{*}
$$
is given by the composition of $\bar{\xi}$ with the restriction map
$(\otimes^{k}W)^{*} \to S^{k}(W)^{*}.$

Consider the linear projection obtained via the symmetrization
$$\pi: \otimes^m E_i\to
S^m(E_i),\,\,\, w_{1}\otimes\dots \otimes w_{m} \, \mapsto \frac{1}{m!}\sum_{\sigma
\in \mathfrak{S}_m}\, w_{\sigma (1)}\otimes\dots \otimes w_{\sigma (m)},$$
where $\mathfrak{S}_m$ is the permutation group on the symbols $[m]:=
\{1, \dots, m\}.$ Thus, we have
$\GL(E_{i})$-equivariant linear maps
$$
S^{m}(E_{i})^{*}\xhookrightarrow{\pi^*}
(\otimes^{m}E_{i})^*\mathop{\xlongrightarrow{\sim}}^{\theta^{\otimes
    m'}} \otimes^{m'}(M(i,i)).
$$

This gives rise to a linear $\GL(E_{i})$-equivariant map
\begin{equation}\label{e1}
S^{i}(S^{m}(E_{i})^*)\to S^{i}(\otimes^{m'}(M(i,i))).
\end{equation}

Now, consider the map
\[
\det\nolimits^{\otimes m'}:  M(i,i)\to \mathbb{C},\,\,\,
  A\mapsto (\det A)^{m'}.
\]
It is clearly a homogeneous polynomial of degree $im'$,
which is $\SL(E_{i})$-invariant. Thus, via the above isomorphism
$\xi^{-1}$, we get a $\SL(E_{i})$-invariant linear map
\begin{equation} \label{e2}
\widehat{\det\nolimits^{\otimes m'}}:S^{i\, m'}(M(i,i))\to
\mathbb{C}.
\end{equation}

Of course, we have a canonical $\GL(E_{i})$-equivariant projection
\begin{equation}
S^{i}(\otimes^{m'}(M(i,i)))\to S^{i\, m'}(M(i,i)),
\end{equation}
obtained via the inclusion
$$S^{i}(\otimes^{m'}(M(i,i))) \subset \otimes^{i}(\otimes^{m'}(M(i,i)))
\simeq \otimes^{im'}(M(i,i))$$
followed by the symmetrization: $\otimes^{im'}(M(i,i)) \to S^{im'}(M(i,i)).$

Composing the maps $(2)\circ (3)\circ (1)$, we get a $\SL(E_{i})$-invariant
linear map
$$
\gamma_{m,i}:S^{i}(S^{m}(E_{i})^{*})\to \mathbb{C}.
$$
For any vector space $W$, we have a canonical $\GL(W)$-equivariant identification
\begin{equation}\label{e4} S^i(W^*)\simeq S^i(W)^*
\end{equation}
via $S^i(W^*) \subset \otimes^i(W^*)\simeq (\otimes^i W)^*\to  S^i(W)^*$, where
the last map is the restriction map.
Thus, $\gamma_{m,i}$ can be thought of as an element of
$[S^{i}(S^{m}(E_{i}))]^{\SL(E_{i})}$.

\end{definition}

\begin{lemma}
$$
\gamma_{m,i}\left(\left(\sum^{i}_{j=1}(e^{*}_{j})^{\otimes m}\right)^{\otimes
  i}\right)\neq 0,
$$
where $\{e^{*}_{1},\dots, e^{*}_{i}\}$ is the basis of $E^{*}_{i}$
dual to the basis $\{e_{1},\dots,e_{i}\}$ of $E_{i}$.
\end{lemma}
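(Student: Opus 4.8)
The plan is to trace the element $w := \left(\sum_{j=1}^i (e_j^*)^{\otimes m}\right)^{\otimes i} \in S^i(S^m(E_i)^*)$ through the composition $(2)\circ(3)\circ(1)$ defining $\gamma_{m,i}$ and show the output is a nonzero scalar. First I would compute the image of $w$ under the map \eqref{e1}: since $\pi^*$ is the inclusion of symmetric tensors and $\theta^{\otimes m'}$ acts factorwise, the element $\sum_{j=1}^i (e_j^*)^{\otimes m}$, viewed in $(\otimes^2 E_i)^* \otimes \cdots \otimes (\otimes^2 E_i)^*$ ($m'$ factors) as $\sum_j (e_j^* \otimes e_j^*)^{\otimes m'}$, maps under $\theta^{\otimes m'}$ to $\sum_{j=1}^i (\Delta_{jj})^{\otimes m'}$, where $\Delta_{jj} \in M(i,i)$ is the elementary matrix with a single $1$ in position $(j,j)$ (because $\theta(e_j^* \otimes e_j^*)_{p,q} = \delta_{pj}\delta_{qj}$). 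Hence the image of $w$ in $S^i(\otimes^{m'}(M(i,i)))$ is $\left(\sum_{j=1}^i (\Delta_{jj})^{\otimes m'}\right)^{\otimes i}$, symmetrized over the $i$ outer tensor slots.

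Next I would apply the projection \eqref{e3} to land in $S^{im'}(M(i,i))$ and then pair with $\widehat{\det^{\otimes m'}}$ via \eqref{e2}. By the definition of $\xi^{-1}$ (through $\bar\xi$), evaluating $\widehat{\det^{\otimes m'}}$ on a symmetrized product of rank-one-type tensors $A_1 \otimes \cdots \otimes A_{im'}$ amounts to extracting the coefficient of $t_1 \cdots t_{im'}$ in $\det(t_1 A_1 + \cdots + t_{im'} A_{im'})^{m'}$, i.e. a sum of mixed "permanent-of-determinant" terms. Unravelling, $\gamma_{m,i}(w)$ is a positive-coefficient combination of values $\det\!\left(\sum_{k=1}^{im'} t_k \Delta_{j_k j_k}\right)^{m'}$-coefficients, over assignments $k \mapsto j_k \in \{1,\dots,i\}$ grouped into $i$ blocks of size $m'$ (one block per outer slot), with the block structure respected. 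The key observation is that $\sum_k s_k \Delta_{j_k j_k}$ is the diagonal matrix with $(j,j)$-entry equal to $\sum_{k : j_k = j} s_k$, so its determinant is $\prod_{j=1}^i \big(\sum_{k: j_k=j} s_k\big)$, and raising to the $m'$-th power and extracting the multilinear coefficient in all $im'$ variables gives a nonnegative number that is strictly positive precisely when each value $j \in \{1,\dots,i\}$ is hit by exactly $m'$ of the indices $k$ — which does occur, for instance when the first block is $\{1,\dots\}$... more precisely when across the $i$ blocks the multiset $\{j_k\}$ equals $\{1^{m'}, 2^{m'}, \dots, i^{m'}\}$. Since all combinatorial coefficients arising from symmetrization are nonnegative and at least one such configuration contributes a strictly positive term, no cancellation is possible and $\gamma_{m,i}(w) > 0$.

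The main obstacle is purely bookkeeping: keeping careful track of the several symmetrizations (the inner $\pi$, the outer $S^i$, and the collapse $S^i(\otimes^{m'}) \to S^{im'}$) and the constant $\tfrac{1}{(im')!}$ from $\bar\xi$, so as to verify that every contributing term enters with a strictly positive rational coefficient and that at least one term is genuinely nonzero. A clean way to finish is to fix the single monomial $\prod_{j=1}^i \prod_{k \in B_j} t_{(j,k)}$ coming from the configuration where outer slot $j$ contributes $(\Delta_{jj})^{\otimes m'}$, check that its coefficient in $\det(\cdots)^{m'}$ equals $(m'!)^i$ up to the overall normalization — in particular nonzero — and note that no other configuration can cancel it because all coefficients share the same sign. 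This yields $\gamma_{m,i}(w)\neq 0$, as claimed.
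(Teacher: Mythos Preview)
Your proposal is correct and follows essentially the same route as the paper's proof: both trace $w$ through $\theta^{\otimes m'}$ to land on sums of tensor powers of the elementary diagonal matrices $E_{j,j}$, then use that $\det\bigl(\sum_k t_k E_{j_k,j_k}\bigr)$ is a product of linear forms in the $t_k$'s, and identify the surviving multilinear terms. The only difference is cosmetic: the paper carries the computation through to the explicit value $\dfrac{i!}{(im')!}\,(m'!)^i$, whereas you stop at the positivity argument (all contributions nonnegative, at least one strictly positive), which is enough for the lemma.
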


\begin{proof} Let $E_{j,j}\in M(i,i)$ be the matrix with  all the
entries $0$, except
$(j,j)$ which is $1$.
By the definition,
\begin{align*}
\gamma_{m,i}\left(\left(\sum^{i}_{j=1}(e^{*}_{j})^{\otimes m}\right)^{\otimes
  i}\right)
&=\gamma_{m,i}\left(\sum_{1\leq j_{1},\dots,j_{i}\leq
  i}\left((e^{*}_{j_{1}})^{\otimes m}\otimes\cdots\otimes
(e^{*}_{j_{i}})^{\otimes m}\right)\right)\\
&= \sum_{1\leq j_{1},\dots,j_{i}\leq
  i}\widehat{\det\nolimits^{\otimes
    m'}}\left(E_{j_{1},j_{1}}^{\otimes m'}\otimes\cdots\otimes
E_{j_{i},j_{i}}^{\otimes m'}\right)\\
& =\sum\limits_{1\leq j_{1},\dots,j_{i}\leq i}\frac{1}{(i\,
  m')!}\quad\text{the coefficient of}\quad (t_{1}\dots t_{i\, m'})\,\,\text{in}\\
& [\det
    ((t_{1}+\cdots+t_{m'})E_{j_{1},j_{1}}+(t_{m'+1}+\cdots+t_{2\,
      m'})E_{j_{2},j_{2}}+\cdots+ \\
    &   (t_{(i-1)m'+1}+\cdots+t_{i\,
      m'})E_{j_{i},j_{i}})]^{m'}\\
&= \sum_{\sigma\in \mathfrak{S}_{i}}\frac{1}{(i\, m')!}\text{~~ the
    coefficient of~~ } (t_{1}t_{2}\dots t_{i\, m'})\,\,\text{in}\\
& \left[(t_{1}+\cdots+t_{m'})^{m'}\dots
    (t_{(i-1)m'+1}+\cdots+t_{i\, m'})^{m'}\right]\\
&= \frac{i!}{(i\, m')!}\left(m'!\right)^{i}\neq 0.
\end{align*}
This proves the lemma.
\end{proof}

We record this in the following.

\begin{lemma}\label{lem2.6}
The element $\gamma_{m,i}$ is the unique (up to a scalar multiple) nonzero
 element of  $\,\,[S^{i}(S^{m}(E_{i}))]^{SL(E_{i})}$.
\end{lemma}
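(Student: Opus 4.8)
The plan is to combine the one-dimensionality result already available to us with the explicit nonvanishing just established. By Proposition \ref{prop2.3}(b), since $m$ is even, $[S^{i}(S^{m}(E_{i}))]^{SL(E_{i})}$ is isomorphic to $\mathbb{C}$ as a vector space (more precisely, by Corollary \ref{coro2.2} with $\ell$ replaced by $i$, $j=i$, and $d=m$ so that $di=jm$ holds, the multiplicity of $V_{E_i}(m\delta_i)$ in $S^i(S^m(E_i))$ equals $\dim [S^i(S^m(E_i))]^{SL(E_i)}$, and this latter space is one-dimensional by Proposition \ref{prop2.3}(b)). Hence it suffices to exhibit \emph{one} nonzero element of this space, and to verify that $\gamma_{m,i}$ is such an element.

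First I would recall from Definition \ref{def1} that $\gamma_{m,i}$ was constructed as the composition $(2)\circ(3)\circ(1)$ of $\GL(E_i)$-equivariant (resp. $\SL(E_i)$-invariant) maps, and that via the identification \eqref{e4} it is legitimately viewed as an element of $[S^{i}(S^{m}(E_{i}))]^{SL(E_{i})}$; this membership was already recorded at the end of Definition \ref{def1}. Then the preceding lemma shows $\gamma_{m,i}\big(\big(\sum_{j=1}^{i}(e_j^*)^{\otimes m}\big)^{\otimes i}\big)=\frac{i!}{(im')!}(m'!)^i\neq 0$, so in particular $\gamma_{m,i}$ is not the zero functional. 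Combining with the one-dimensionality of the ambient invariant space, every nonzero element of $[S^{i}(S^{m}(E_{i}))]^{SL(E_{i})}$ is a scalar multiple of $\gamma_{m,i}$, which is exactly the assertion of Lemma \ref{lem2.6}.

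The only point requiring a little care is the bookkeeping that the invariant space in question is genuinely one-dimensional for the relevant parameter values: one must apply Proposition \ref{prop2.3} with the vector space taken to be $E_i$ (of dimension $i$, so ``$\ell$'' there is $i$) and with exponent $j=i$, noting that $j=\ell$ puts us in case (b), and that $m$ even gives $\mathbb{C}$ rather than $(0)$. Since no genuine obstacle arises --- the nontrivial input (Howe's Proposition \ref{prop2.3} and the explicit computation of the preceding lemma) is already in hand --- this is essentially a one-line deduction, and I expect the writeup to consist of little more than citing those two facts and concluding uniqueness up to scalar.
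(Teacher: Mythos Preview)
Your proposal is correct and matches the paper's approach exactly: the paper provides no separate proof but simply writes ``We record this in the following'' before stating the lemma, since it follows immediately from the preceding nonvanishing computation together with the one-dimensionality of the invariant space (Proposition~\ref{prop2.3}(b)). Your careful bookkeeping about applying Proposition~\ref{prop2.3} with $\ell=i$ and $j=i$ is precisely the right justification.
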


\section{Calculation of $\gamma_{m,i}$ on the determinant orbit closure}
We continue to assume that $m$ is even and $m'=m/2$.

Let $\mathfrak{v}$ be a complex vector space of dimension $m$ and let
$E:=\End \mathfrak{v}=\mathfrak{v}\otimes \mathfrak{v}^{*}$.
Fix a basis $\{v_{1}, \ldots, v_{m}\}$ of $\mathfrak{v}$ and let
$\{v^{*}_{1}, \ldots, v^{*}_{m}\}$ be the dual basis of $\mathfrak{v}^{*}$. Take
the basis $\{v_{i}\otimes v^{*}_{j}\}_{1\leq i,j\leq m}$ of $E$ and
order them as $\{e_{1},e_{2},\ldots,e_{m^{2}}\}$ satisfying
$$
e_{1}=v_{1}\otimes v^{*}_{1},\quad e_{2}=v_{2}\otimes
v^{*}_{2},\ldots,e_{m}=v_{m}\otimes v^{*}_{m}.
$$
Fix $1\leq i\leq m$ and consider the subspace $E_i$ of $E$ spanned by
$\{e_{1},\dots,e_{i}\}$.  Take $A\in \End\,E$ of
the form
$$
Ae_{j}=\sum^{m}_{p=1}a^{j}_{p}e_{p}, \ 1\leq j\leq i.
$$

In the sequel, we will only consider $A\in \End\, E$ of
the above form and the values of $Ae_{j}$ for $j>i$ will be irrelevant for us.
Thus, we can (and will) think of $A=(a^j_p)_{1\leq p\leq m, 1\leq j\leq i}$ as an
$m\times i$-matrix.

Define a right action of the semigroup $\End\, E$ on
$Q:=\mathcal{P}^m(E)\simeq S^{m}(E)^{*}$  (cf. Definition \ref{def1} for the
last identification under $\xi$) via
\beqn\label{e-1}
(f\odot A) (e) = f(Ae),\quad\text{for}\,\,\, f\in Q, \ A\in \End
  \,E\,\,\,\text{and}\,\,\, e\in E.
\eeqn

Take $f=\mathscr{D}\odot A\in Q$, where $\mathscr{D}\in \mathcal{P}^m(E)$ is the
function taking the determinant of any $X\in E$. For $1\leq l_1, \dots, l_m \leq i$,
let $A^{l_{1}, \dots , l_m}$
denote the $m\times m$
matrix with the first column $\left[\begin{smallmatrix}
    a^{l_{1}}_{1}\\ \vdots\\ a^{l_{1}}_{m}\end{smallmatrix}\right]$ etc.
Then, the  image of $f_{|E_i}$ in $\otimes^{m'}M(i,i)$ under $\theta^{\otimes\,
  m'}\circ \pi^{*}$ (cf. Definition \ref{def1}) is given by
\begin{align*}
& \sum_{1\leq j_{p},k_{p}\leq i}f\left((e_{j_{1}}\otimes
  e_{k_{1}})\otimes\cdots\otimes (e_{j_{m'}}\otimes
  e_{k_{m'}})\right)E_{j_{1},k_{1}}\otimes\cdots\otimes
  E_{j_{m'},k_{m'}}\\
&= \frac{1}{m!}\sum_{1\leq j_{p},k_{p}\leq
    i}\Perm\left(A^{j_{1},k_{1}, \dots , j_{m'},k_{m'}}\right)E_{j_{1},k_{1}}\otimes\cdots\otimes
  E_{j_{m'},k_{m'}}\\
&=\frac{1}{m!}\sum_{\substack{d_{1}+\cdots +d_{i}=m\\ d_{j}\geq 0}}\Perm
A^{(d_{1},\dots,d_{i})}\sum_{1\leq j_{p},k_{p}\leq
  i}E_{j_{1},k_{1}}\otimes\cdots\otimes E_{j_{m'},k_{m'}},
\end{align*}
where the last summation runs over those ordered $m$-tuples
$(j_{1},k_{1}, \dots , j_{m'},k_{m'})$ such that the collection
(without regard to the order)
$$
\left\{j_{1},k_{1},\dots,j_{m'},k_{m'}\right\}=\left\{1^{d_{1}},2^{d_{2}},\dots,
i^{d_{i}}\right\},
$$
 $\{1^{d_{1}},2^{d_{2}},\dots,i^{d_{i}}\}$ means the collection
$$
\left\{\underbracket{1,\dots,1}_{d_{1}\text{-times}};
\underbracket{2,\dots, 2}_{d_{2}\text{-times}};\dots;
\underbracket{i,\dots,i}_{d_{i}\text{-times}}\right\},
$$
$A^{(d_{1},\dots,d_{i})}$ means the $m\times m$ matrix with
columns
$$
\underbracket{A^{1},\dots,A^{1}}_{d_{1}\text{-times}}; \ \underbracket{A^{2},
\dots,A^{2}}_{d_{2}\text{-times}},\dots,\underbracket{A^{i},\dots,A^{i}}_{d_{i}
\text{-times}}\,,\quad\,\,
A^{j}\,\,\,\text{is the column} \,\,\left[\begin{smallmatrix}
    a^{j}_{1}\\ \vdots\\ a^{j}_{m}\end{smallmatrix}\right],$$
    and Perm denotes the permanent  of the matrix.

On the vector space $M(m,i)$ of $m\times i$-matrices, $\GL(m)\times \GL(i)$ acts via:
$$(g,h)\cdot X= gXh^{-1}, \,\,\,\text{for}\,\,g\in \GL(m), h\in  \GL(i), X\in
M(m,i).$$
In particular, the permutation group $\mathfrak{S}_m$ thought of as the subgroup of
permutation matrices in $\GL(m)$ acts on $M(m,i)$ and hence on any
$\mathcal{P}^k(M(m,i)).
$
For any $\mathfrak{d}=(d_{1},\dots,d_{i})$, $d_{1}+\cdots+d_{i}=m$ and
$d_j\geq 0$, set
$$a_{\mathfrak{d}}(A)= \Perm A^{(d_{1},\dots,d_{i})}.$$
Then, clearly
$$
a_{\mathfrak{d}}\in
\mathcal{P}^{m}(M(m,i))^{-(\epsilon_{1}+\cdots+\epsilon_{m}), \mathfrak{S}_{m}},
$$
where the superscript `$-(\epsilon_{1}+\cdots+\epsilon_{m}),\mathfrak{S}_{m}$'
denotes the $\mathfrak{S}_{m}$-invariants of weight
$-(\epsilon_{1}+\cdots+\epsilon_{m})$ with respect to the action of
$\GL(m)$, i.e., the invertible diagonal matrices $(t_1, \dots, t_m)$ act via
$(t_1 \dots t_m)^{-1}$. Recall from [GW, Theorem 5.6.7] that, as
$\GL(m)\times \GL(i)$-modules, for any $j\geq 0$,
\begin{equation}\label{e5}
\mathcal{P}^{j}(M(m,i))\simeq \sum_{\substack{\mu:\mu_{1}\geq
    \mu_{2}\geq \dots\geq \mu_{i}\geq
    0\\ |\mu|=j}}V_{m}(\mu)^*\otimes V_{i}(\mu),
\end{equation}
where $|\mu|:=\sum \mu_i$ and $V_{m}(\mu)$ denotes the irreducible $\GL(m)$-module
corresponding to the partition $\mu$.

Let $V_m:=\mathbb{C}^m$ with the standard basis $\{v_1, \dots, v_m\}$.
Define the elements
\beqn\label{e6}
\overline{v}_{o} := v^{*}_{1}\otimes\cdots\otimes
v^{*}_{m} \in \otimes^{m}(V^*_{m});\,\,\, v_{o} :=
\frac{1}{m!}\sum_{\sigma \in
  \mathfrak{S}_{m}}\sigma\cdot \overline{v}_{o}\in S^{m}(V^*_{m}),
  \eeqn
and
\beqn\label{e100}\mathfrak{v}_o:=\frac{1}{m!}\sum_{\sigma \in
  \mathfrak{S}_{m}}\,\sigma\cdot (v_1\otimes \dots \otimes v_m)\in S^m(V_m).
  \eeqn
From \eqref{e5}, by a classical result due to Kostant [Ko]  (which asserts that
for any irreducible $\SL(V_m)$-module $V_m(\lambda)$ corresponding to the partition
$\lambda$ with $|\lambda|=m$, its zero weight space is an irreducible
representation $W_\lambda$ of $\mathfrak{S}_m$ corresponding to the partition
$\lambda$), we get
\begin{align*}
&
  \mathcal{P}^{m}(M(m,i))^{-(\epsilon_{1}+\cdots+\epsilon_{m}),\mathfrak{S}_{m}}
  \simeq \left(S^{m}(V_{m})^{\epsilon_{1}+\cdots+\epsilon_{m},\mathfrak{S}_{m}}
  \right)^{*}\otimes
  S^{m}(V_{i}), \,\,\,\text{and}\\
& S^{m}(V_{m})^{\epsilon_{1}+\cdots+\epsilon_{m},\mathfrak{S}_{m}}\simeq
  \mathbb{C} \mathfrak{v}_o.
\end{align*}
Thus,
\begin{equation}\label{e7}
\mathcal{P}^{m}(M(m,i))^{-(\epsilon_{1}+\cdots+\epsilon_{m}),\mathfrak{S}_{m}}\simeq
S^{m}(V_{i}),
\end{equation}
as $\GL(i)$-modules. It is easy to see that $\{a_{\mathfrak{d}}\}_{\mathfrak{d}=(d_{1},\dots,d_{i})}$ with
$|\mathfrak{d}|=m$ are linearly independent (by taking , e.g., $a^j_p=a_1^j$ for all
$1\leq p \leq m$).
Hence, $\{a_{\mathfrak{d}}\}_{|\mathfrak{d}|=m}$
provides a basis of
$S^{m}(V_{i})$ under the above identification \eqref{e7}. The $\GL(i)$-module
 isomorphism
\eqref{e7} induces a $\GL(i)$-algebra homomorphism:
\[
\varphi : S^{\bullet}(S^{m}(V_{i}))
 \to
\mathcal{P}^{m\bullet}(M(m,i))^{-\bullet(\epsilon_{1}+\cdots+\epsilon_{m}),
 \mathfrak{S}_m}\simeq \oplus \left(V_{m}(\mu)^{\bullet(\epsilon_{1}+\cdots+
 \epsilon_{m}),
 \mathfrak{S}_m}\right)^*\otimes V_{i}(\mu),
\]
where the above sum  runs over $\mu:\mu_{1}\geq \dots\geq \mu_{i}\geq 0$,
$|\mu|=m\bullet$; the last isomorphism follows by the identity \eqref{e5}.

We now give an alternative description of the map
$$
\varphi: S^{\bullet}(S^{m}(V_{i}))\to
\mathcal{P}^{m\bullet}(M(m,i))^{-\bullet(\epsilon_{1}+\cdots+\epsilon_{m}), \mathfrak{S}_m}.
$$
First of all, as $\GL(m) \times \GL(i)$-modules,
\beqn\label{e8}
\mathcal{P}^{mj}(M(m,i)) \simeq \mathcal{P}^{mj}(V_{m}\otimes
  V^{*}_{i})
\simeq S^{mj}(V^{*}_{m}\otimes V_{i}),
\eeqn
where the last identification is obtained from the isomorphism $\xi^{-1}$ of Definition
\ref{def1} followed by the identification \eqref{e4}.
Define the map
\[
\overline{\varphi}:\otimes^{m}V_{i}\to
\otimes^{m}(V^*_{m}\otimes V_{i})=
  (\otimes^{m}V^*_{m})\otimes (\otimes^{m}V_{i}), \,\,
  \overline{\varphi}(v)  = v_{o}\otimes v.
\]
Clearly, the map $\overline{\varphi}$ is a $\GL(V_{i})$-module map.
Moreover, it restricts to the map $\overline{\varphi}_{1}$:
\[
\xymatrix{
S^{m}(V_{i})\ar[r]^-{\overline{\varphi}_{1}}\ar[d] &
S^{m}(V^*_{m}\otimes V_{i})\ar[d]\\
\otimes^{m}(V_{i})\ar[r]^-{\overline{\varphi}} & \otimes^{m}(V^*_{m}\otimes
V_{i}),
}
\]
where the vertical maps are the canonical inclusions.

It is easy to see that $\overline{\varphi_1}$ is a $\GL(V_{i})$-module map
with image inside $S^{m}(V^*_{m}\otimes
V_{i})^{-(\epsilon_{1}+\cdots+\epsilon_{m}), \mathfrak{S}_{m}}$.
Thus, from the irreducibility of $S^{m}V_{i}$ as $\GL(V_i)$-module, applying
the Schur's lemma, we can choose the identification  \eqref{e7} so that
$\overline{\varphi}_{1}$ coincides
with the map $\varphi_{|S^{m}(V_{i})}$ under the identification  \eqref{e8}.

The map $\overline{\varphi}_{1}:S^{m}(V_{i})\to
S^{m}(V^*_{m}\otimes V_{i})$ extends to an algebra homomorphism (still denoted by)
$$\overline{\varphi}_{1}:S^\bullet(S^{m}(V_{i}))\to
S^\bullet(S^{m}(V^*_{m}\otimes V_{i})).$$
 The isomorphism \eqref{e8} for $j=1$:
\begin{equation}\label{e9}
\mathcal{P}^{m}(M(m,i)) \simeq
 S^{m}(V^{*}_{m}\otimes V_{i})
\end{equation}
induces an algebra homomorphism $\beta: S^\bullet(S^{m}(V^*_{m}\otimes V_{i}))
\to \mathcal{P}^{m\bullet}(M(m,i)).$ Let $\overline{\varphi}'_{1}:
S^\bullet(S^{m}(V_{i}))\to \mathcal{P}^{m\bullet}(M(m,i))$ be the $\GL(V_i)$-algebra
homomorphism which is the composite $\beta\circ \overline{\varphi}_{1}$.

Since $\overline{\varphi}'_{1}$ coincides with $\varphi$ on
$S^{m}(V_{i})$, and both $\varphi$ and $\overline{\varphi}'_{1}$ are
algebra homomorphisms, we get that
\beqn\label{e+}
\overline{\varphi}'_{1}=\varphi.
\eeqn

Consider the function (for $i\leq m$)
$$
\theta: M(m,i) \to \mathcal{P}^m(E_i)\simeq S^{m}(E_{i})^*,\,\,\,
A \mapsto (\mathscr{D}\odot A)_{|{E_{i}}}.
$$
Explicitly,
$$
\theta(A)\left(\sum^{i}_{j=1}\lambda_{j}e_{j}\right)=\prod^{m}_{p=1}
\left(\sum^{i}_{j=1}\lambda_{j}a_{p}^j\right),\quad\text{for}\quad
A= (a_{p}^j)_{1\leq p\leq m, 1\leq j\leq i}.
$$
Clearly, $\theta$ is a polynomial function of homogeneous degree $m$. Moreover, it
is $\GL(E_{i})$-equivariant:
\begin{align*}
\theta(A\cdot g^{-1}) &= (\mathscr{D}\odot (Ag^{-1}))_{|{E_{i}}}\\
&= g\cdot ((\mathscr{D}\odot A)_{|{E_{i}}})\\
&= g\cdot\theta(A).
\end{align*}

Of course, $\theta$ gives rise to a $\GL(E_{i})$-algebra
homomorphism
$$
\theta^{*}: {S}^{\bullet}(S^{m}(E_{i}))\to
{\mathcal{P}}^{m \bullet }(M(m,i)).
 $$

\begin{lemma}\label{lemma3.1}
$\text{\rm Im\,}\left(\theta^{*}_{|{S^{m}(E_{i})}}\right)\subset
  \mathcal{P}^{m}(M(m,i))^{-(\epsilon_1+\cdots+\epsilon_{m}), \mathfrak{S}_m}$.
\end{lemma}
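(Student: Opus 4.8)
The plan is to read everything off the explicit formula
\[
\theta(A)\Bigl(\sum_{j=1}^{i}\lambda_{j}e_{j}\Bigr)=\prod_{p=1}^{m}\Bigl(\sum_{j=1}^{i}\lambda_{j}a_{p}^{j}\Bigr),\qquad A=(a_{p}^{j})_{1\le p\le m,\,1\le j\le i},
\]
recorded just before the lemma. I will first establish two transformation rules for the polynomial map $\theta$ under the subgroups of $\GL(m)$ that act on $M(m,i)$ by left multiplication (this is the copy of $\GL(m)$ entering \eqref{e5}, acting on the row index $p$), and then transfer them to $\theta^{*}$ by functoriality.

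First, the symmetric group $\mathfrak{S}_{m}$, sitting inside $\GL(m)$ as permutation matrices, sends $A$ to the matrix with $(p,j)$-entry $a_{\sigma^{-1}(p)}^{j}$; since the product $\prod_{p=1}^{m}$ in the displayed formula runs over all rows, it is unchanged by this reindexing, so $\theta(\sigma\cdot A)=\theta(A)$ for every $\sigma\in\mathfrak{S}_{m}$. Second, the diagonal torus $t=\mathrm{diag}(t_{1},\dots,t_{m})$ of $\GL(m)$ sends $A$ to the matrix with $(p,j)$-entry $t_{p}a_{p}^{j}$; pulling the scalar $t_{p}$ out of the $p$-th factor of the product yields $\theta(t\cdot A)=(t_{1}\cdots t_{m})\,\theta(A)$. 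Both facts are compatible with, and independent of, the $\GL(E_{i})=\GL(i)$-action, which operates on the target $\mathcal{P}^{m}(E_{i})\simeq S^{m}(E_{i})^{*}$ and on the column index $j$, not on the rows.

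Now $\theta$ is homogeneous of degree $m$, so $\theta^{*}$ carries the degree-one part $S^{m}(E_{i})$ of $S^{\bullet}(S^{m}(E_{i}))$ into $\mathcal{P}^{m}(M(m,i))$, and for $\ell\in S^{m}(E_{i})$, regarded as a linear form on the target $\mathcal{P}^{m}(E_{i})\simeq S^{m}(E_{i})^{*}$ of $\theta$, one has $\theta^{*}(\ell)=\ell\circ\theta$. Combining this with the two rules above and with the standard action $(g\cdot f)(A)=f(g^{-1}\cdot A)$ of $\GL(m)$ on $\mathcal{P}^{m}(M(m,i))$ gives $(\sigma\cdot\theta^{*}(\ell))(A)=\theta^{*}(\ell)(\sigma^{-1}\cdot A)=\theta^{*}(\ell)(A)$ and $(t\cdot\theta^{*}(\ell))(A)=\theta^{*}(\ell)(t^{-1}\cdot A)=(t_{1}\cdots t_{m})^{-1}\theta^{*}(\ell)(A)$. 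Hence $\theta^{*}(\ell)$ is $\mathfrak{S}_{m}$-invariant and has $\GL(m)$-weight $-(\epsilon_{1}+\cdots+\epsilon_{m})$, which is exactly the assertion.

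There is no real obstacle here; the argument is bookkeeping. The one point that needs care is the weight convention: one must check that $\theta$ transforming through the character $t\mapsto t_{1}\cdots t_{m}$ yields, after the contravariant passage to functions on $M(m,i)$, the weight $-(\epsilon_{1}+\cdots+\epsilon_{m})$ rather than $+(\epsilon_{1}+\cdots+\epsilon_{m})$, and that the relevant $\GL(m)$ is the left-multiplication copy from \eqref{e5} rather than $\GL(E_{i})$, under which $\theta$ is instead equivariant. This is consistent with the explicit computation preceding the lemma, where the image of $f_{|E_{i}}$ was expressed through the permanents $a_{\mathfrak{d}}$, which already lie in $\mathcal{P}^{m}(M(m,i))^{-(\epsilon_{1}+\cdots+\epsilon_{m}),\,\mathfrak{S}_{m}}$.
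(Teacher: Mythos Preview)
Your proof is correct and follows essentially the same approach as the paper. The paper phrases the two transformation rules in terms of $\mathscr{D}\odot A$ rather than the explicit product $\prod_{p}\bigl(\sum_{j}\lambda_{j}a_{p}^{j}\bigr)$, but since that product is nothing but the determinant of the diagonal matrix $Ae$, the two computations are identical; your version just makes the row-symmetry and row-homogeneity visible directly.
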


\begin{proof}
Let $\bf{t}$ be the diagonal matrix $(t_{1}, \dots, t_m)\in \GL(m).$
  For any $f\in S^{m}(E_{i})$,
\begin{align*}
(\theta^{*}f)({\bf t}^{-1}A) &= f\left((\mathscr{D}\odot
  {\bf t}^{-1}A)_{|{E_{i}}}\right)\\
&= t^{-1}_{1}\dots t^{-1}_{m}f\left((\mathscr{D}\odot A)_{|{E_{i}}}\right).
\end{align*}
This shows that
$$
\text{Im\,}\left(\theta^{*}_{|S^{m}(E_{i})}\right)\subset
\mathcal{P}^{m}(M(m,i))^{-(\epsilon_{1}+\cdots+\epsilon_{m})}.
$$
We next show that for any $f\in S^{m}(E_{i})$, $\theta^{*}f$ is
$\mathfrak{S}_{m}$-invariant: Take $\sigma\in \mathfrak{S}_{m}$ (considered as a
permutation matrix), then
\begin{align*}
(\theta^{*}f)(\sigma A) &= f\left((\mathscr{D}\odot \sigma
  A)_{|E_{i}}\right)\\
&= f((\mathscr{D}\odot A)_{|E_i}).
\end{align*}
This proves the lemma.
\end{proof}

Since the function $\theta$ is clearly nonzero, we see that
$\theta^{*}$ coincides (up to a nonzero scalar multiple in any degree) with the function
$\varphi:{S}^{\bullet}(S^{m}(E_{i}))\to \mathcal{P}^{m\bullet}
(M(m,i))^{-\bullet(\epsilon_{1}+\cdots+\epsilon_{m}), \mathfrak{S}_m}$ defined
earlier.
Now, $S^{i}(S^{m}(E_{i}))$ has a unique (up to a scalar multiple) $\SL(E_{i})$-invariant (by
Proposition \ref{prop2.3}). We want to determine if
$\theta^{*}_{|[S^{i}(S^{m}(E_{i}))]^{\SL(E_{i})}}\neq 0$?

By the definition, $S^{j}(S^{m}(V_{i}))=
[\otimes^{j}(\otimes^{m}V_{i})]^{H_j},$ where $H_j\subset \mathfrak{S}_{mj}$ is
the subgroup
$\mathfrak{S}^{\times j}_{m}\rtimes \mathfrak{S}_{j}$ acting as
\begin{align*}
&
  (({\sigma}_{1},\dots,\sigma_{j}),\mu)\cdot\left((v^{1}_{1}\otimes\cdots\otimes
  v^{1}_{m})\otimes\cdots\otimes (v^{j}_{1}\otimes\cdots\otimes
  v^{j}_{m})\right)\\
&= \left(v^{\mu(1)}_{\sigma_{1}(1)}\otimes\cdots\otimes
  v^{\mu(1)}_{\sigma_{1}(m)}\right)\otimes\cdots\otimes
  \left(v^{\mu(j)}_{\sigma_{j}(1)}\otimes\cdots\otimes
  v^{\mu(j)}_{\sigma_{j}(m)}\right),
\end{align*}
for $\sigma_p\in \mathfrak{S}_{m}$ and $\mu \in \mathfrak{S}_{j}$.
\begin{proposition}\label{prop8}
The map $\varphi_{|[S^{i}(S^{m}(E_{i}))]^{\SL(E_{i})}}\neq 0$ if and only if
the $\GL(V_{m})$-submodule $U_i$ generated by $v_{o}^{\otimes i}\in
S^{i}(S^{m}(V^{*}_{m}))= [\otimes^{i}(\otimes^{m}V^{*}_{m})]^{H_i}$ intersects
the  isotypic
component $\mathcal{I}_{m\delta_i}$ of
$S^i(S^{m}(V^{*}_{m}))$ corresponding to the irreducible $\GL(V_{m})$-module
$V_m(m\delta_i)^*$ nontrivially.
\end{proposition}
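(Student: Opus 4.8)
The plan is to run everything through the factorization $\varphi=\beta\circ\overline{\varphi}_{1}$ of \eqref{e+}, to follow the (essentially unique, by Lemma~\ref{lem2.6}) invariant $\gamma\in[S^{i}(S^{m}(E_{i}))]^{\SL(E_{i})}=[S^{i}(S^{m}(V_{i}))]^{\SL(V_{i})}$ through $\overline{\varphi}_{1}$, and to recognize the obstruction to $\beta$ annihilating it in terms of $v_{o}^{\otimes i}$. Since $\overline{\varphi}_{1}$ is an algebra homomorphism carrying the degree-one generating subspace $S^{m}(V_{i})$ injectively into $S^{m}(V^{*}_{m}\otimes V_{i})$ (as $v_{o}\neq 0$, one recovers $v$ from $v_{o}\otimes v$ by contracting the $V^{*}_{m}$-slots against a generic $w\in V_{m}$ with $v_{o}(w)\neq 0$), it is injective; hence $\varphi(\gamma)\neq 0$ if and only if $\overline{\varphi}_{1}(\gamma)\notin\ker\beta$.

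First I would make $\overline{\varphi}_{1}(\gamma)$ explicit. Let $\Phi\colon S^{i}(S^{m}(V^{*}_{m}))\otimes S^{i}(S^{m}(V_{i}))\to S^{i}(S^{m}(V^{*}_{m}\otimes V_{i}))$ be the natural $\GL(V_{m})\times\GL(V_{i})$-equivariant map obtained from the canonical map $S^{i}(X)\otimes S^{i}(Y)\to S^{i}(X\otimes Y)$ (with $X=S^{m}(V^{*}_{m})$, $Y=S^{m}(V_{i})$) followed by $S^{i}$ of the Cauchy inclusion $S^{m}(V^{*}_{m})\otimes S^{m}(V_{i})\hookrightarrow S^{m}(V^{*}_{m}\otimes V_{i})$. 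Because the multiplication in $S^{i}(-)$ is commutative, one checks directly that $\overline{\varphi}_{1}(v^{(1)}\cdots v^{(i)})=(v_{o}\otimes v^{(1)})\cdots(v_{o}\otimes v^{(i)})=\Phi(v_{o}^{\otimes i}\otimes(v^{(1)}\cdots v^{(i)}))$ for $v^{(k)}\in S^{m}(V_{i})$, so $\overline{\varphi}_{1}(\gamma)=\Phi(v_{o}^{\otimes i}\otimes\gamma)$. As $v_{o}^{\otimes i}$ has trivial $\GL(V_{i})$-action and $\gamma$ is $\SL(V_{i})$-invariant of $\GL(V_{i})$-weight $m\delta_{i}$, the element $\Phi(x\otimes\gamma)$, for \emph{any} $x\in S^{i}(S^{m}(V^{*}_{m}))$, lies in the $\SL(V_{i})$-invariant part of $S^{i}(S^{m}(V^{*}_{m}\otimes V_{i}))$.

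Next I would identify the target. By the Cauchy decomposition \eqref{e5}, $\mathcal{P}^{mi}(M(m,i))\simeq\bigoplus_{\lambda}V_{m}(\lambda)^{*}\otimes V_{i}(\lambda)$ over partitions $\lambda$ of $mi$ with at most $i$ rows, and $V_{i}(\lambda)$ carries a nonzero $\SL(V_{i})$-invariant only for $\lambda=m\delta_{i}$ (the rectangle $(m^{i})$); hence the $\SL(V_{i})$-invariant part of $\mathcal{P}^{mi}(M(m,i))$ is the single irreducible $\GL(V_{m})$-module $V_{m}(m\delta_{i})^{*}$, the factor $V_{i}(m\delta_{i})$ being one-dimensional. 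Consequently $\Theta:=\beta\circ\Phi(-\otimes\gamma)\colon S^{i}(S^{m}(V^{*}_{m}))\to\mathcal{P}^{mi}(M(m,i))$ is a $\GL(V_{m})$-equivariant map with image in $V_{m}(m\delta_{i})^{*}$, and by \eqref{e+} it satisfies $\Theta(v_{o}^{\otimes i})=\beta(\overline{\varphi}_{1}(\gamma))=\varphi(\gamma)$. By Corollary~\ref{coro2.4}, $V_{m}(m\delta_{i})^{*}$ occurs in $S^{i}(S^{m}(V^{*}_{m}))$ with multiplicity exactly one, its isotypic component being $\mathcal{I}_{m\delta_{i}}$. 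Granting that $\Theta\neq 0$ (see below), Schur's lemma forces $\Theta$ to be a nonzero scalar multiple of the canonical projection $S^{i}(S^{m}(V^{*}_{m}))\twoheadrightarrow\mathcal{I}_{m\delta_{i}}$; therefore $\varphi(\gamma)=\Theta(v_{o}^{\otimes i})\neq 0$ exactly when $v_{o}^{\otimes i}$ has nonzero component in $\mathcal{I}_{m\delta_{i}}$, which — using again that $\mathcal{I}_{m\delta_{i}}$ is irreducible of multiplicity one and that $v_{o}^{\otimes i}$ generates $U_{i}$ — is equivalent to $U_{i}\cap\mathcal{I}_{m\delta_{i}}\neq(0)$. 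This chain of equivalences is the assertion of the proposition.

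The one substantive point is the non-vanishing $\Theta\neq 0$, i.e. that $\beta$ does not annihilate the distinguished copy of $V_{m}(m\delta_{i})^{*}$ carved out by $\Phi(-\otimes\gamma)$. This is not formal: the multiplicity of $V_{m}(m\delta_{i})^{*}$ inside the $\SL(V_{i})$-invariant part of $S^{i}(S^{m}(V^{*}_{m}\otimes V_{i}))$ is in general $>1$, so $\beta$ really does collapse several copies onto the unique one present in $\mathcal{P}^{mi}(M(m,i))$, and one must check the copy produced by $\Phi(-\otimes\gamma)$ is not among those killed. I expect to settle this by an explicit computation: using the description $\theta(A)=(\mathscr{D}\odot A)_{|E_{i}}=L_{1}\cdots L_{m}$ as a product of linear forms on $E_{i}$, together with the formula $\gamma_{m,i}=\widehat{\det\nolimits^{\otimes m'}}\circ(\cdots)$ of Definition~\ref{def1}, one produces an element $x_{0}\in S^{i}(S^{m}(V^{*}_{m}))$ and a matrix $A_{0}\in M(m,i)$ for which $\Theta(x_{0})(A_{0})$ reduces to an explicit combinatorial constant that one evaluates to be nonzero. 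In contrast with the individual coefficients of $\varphi(\gamma)=\theta^{*}(\gamma)$ — which encode Alon--Tarsi type alternating sums and are the subject of the later sections — this constant carries no Latin-square data, so the step is unconditional; the routine bookkeeping in carrying it out is what I would defer.
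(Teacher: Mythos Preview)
Your reformulation via $\Theta=\beta\circ\Phi(-\otimes\gamma)$ and Schur's lemma is correct and is essentially a reorganization of the paper's argument. The paper computes $\varphi(v)$ directly as the full symmetrization $\frac{1}{(im)!}\sum_{\sigma\in\mathfrak{S}_{mi}}\sigma\cdot(v_{o}^{\otimes i}\otimes v)$, decomposes $v_{o}^{\otimes i}=\sum_{\mu}v_{\mu}$ along the Schur--Weyl splitting $S^{i}(S^{m}(V_{m}^{*}))\simeq\bigoplus_{\mu}[W_{\mu}]^{H_{i}}\otimes V_{m}(\mu)^{*}$, kills the cross terms $\sum_{\sigma}\sigma\cdot(v_{\mu}\otimes v)$ for $\mu\neq m\delta_{i}$ by self-duality of $W_{\mu}$, and is left with the $\mu=m\delta_{i}$ term. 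Your route via Cauchy on the target and Schur's lemma on $\Theta$ reaches the same reduction from the other side; the two are equivalent once one unwinds $\beta\circ\Phi$ on Schur--Weyl blocks.

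The genuine gap is your handling of $\Theta\neq 0$. You propose to exhibit an explicit $x_{0}\in S^{i}(S^{m}(V_{m}^{*}))$ and $A_{0}\in M(m,i)$ with $\Theta(x_{0})(A_{0})\neq 0$, but $\Theta$ factors through the projection onto $\mathcal{I}_{m\delta_{i}}$, so any workable $x_{0}$ must have \emph{visibly} nonzero $\mathcal{I}_{m\delta_{i}}$-component. For the natural candidates (e.g.\ $v_{o}^{\otimes i}$ itself) this is precisely the question at hand, and writing down an explicit highest weight vector of $\mathcal{I}_{m\delta_{i}}$ and then evaluating $\Theta$ on it leads straight back into the same representation theory rather than to a clean combinatorial constant. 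The paper's resolution is a two-line abstract argument which I recommend in place of your last paragraph: on the $\mu=m\delta_{i}$ block, $\beta\circ\Phi$ is the $\mathfrak{S}_{mi}$-invariant pairing $[W_{m\delta_{i}}]^{H_{i}}\otimes[W_{m\delta_{i}}]^{H_{i}}\to\mathbb{C}$ tensored with the identity on $V_{m}(m\delta_{i})^{*}\otimes V_{i}(m\delta_{i})$; since $[W_{m\delta_{i}}]^{H_{i}}$ is one-dimensional (equation~\eqref{e10}) and an $\mathfrak{S}_{mi}$-invariant nondegenerate form on the irreducible self-dual module $W_{m\delta_{i}}$ restricts nondegenerately to $H_{i}$-invariants, this pairing is nonzero --- hence $\Theta\neq 0$, unconditionally.
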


\begin{proof}
Take $0\neq v\in
[S^{i}(S^{m}(E_{i}))]^{\SL(E_{i})}=[\otimes^{i}(\otimes^{m}E_{i})]^{H_i\times \SL(E_i)}$.

Recall that for any partition $\lambda: \lambda_1 \geq \dots\geq \lambda_d > 0$, $d$
is called the {\it height} $ht\, \lambda$ of $\lambda$. We set
$|\lambda|:=\sum \lambda_j$. Let $W_\lambda$ be the corresponding
irreducible $\mathfrak{S}_{|\lambda|}$-module  and let $V_i(\lambda)$ be the
corresponding irreducible
$\GL(i)$-module for any $i\geq d$. By the Schur-Weyl duality (cf.
[GW, Thoerem 9.1.2]),
$$
S^{i}(S^{m}(E_{i}))\simeq \bigoplus_{\substack{ht\,\lambda\leq
    i\\ |\lambda|=mi}}[W_{\lambda}]^{H_i}\otimes V_{i}(\lambda).
$$
Thus, we get
\begin{equation}\label{e10}
[S^{i}(S^{m}(E_{i}))]^{\SL(E_{i})}\simeq [W_{m\delta_{i}}]^{H_i}
\otimes V_i(m\delta_i).
\end{equation}
In particular,
 $[W_{m\delta_{i}}]^{H_i}$ is one dimensional.
Also, consider the analogous decomposition:
\begin{equation}\label{e11}
S^{i}(S^{m}(V^{*}_{m}))\simeq \bigoplus_{\substack{ht\, \mu\leq
    m\\ |\mu|=mi}}[W_{\mu}]^{H_i}\otimes V_{m}(\mu)^{*},
\end{equation}
and write
\begin{equation}\label{e11}
v_{o}^{\otimes i}=\sum_{\mu}v_{\mu},
\end{equation}
 under the above decomposition.

Let $M\subset [\otimes^{i}(\otimes^{m}E_{i})]^{\SL(E_i)}$ be the $\mathfrak{S}_{mi}$-submodule
generated by $v$ and, for any $\mu$ with $ht\,\mu \leq
    m$ and $|\mu|=mi$, let $M_\mu \subset \otimes^{i}(\otimes^{m} V_m^*)$ be the
$\mathfrak{S}_{mi}$-submodule
generated by $v_\mu$. Then, by the Schur-Weyl duality, $M\simeq W_{m\delta_i}$ and
$M_\mu$ is isotypic of type $W_\mu$. By the definition,
\begin{align}\label{e12}
\bar{\varphi}'_1 (v)&= \frac{1}{im!}\sum_{\sigma\in \mathfrak{S}_{mi}}\sigma\cdot
(v_{o}^{\otimes i}\otimes v)\in S^{mi}(V_m^*\otimes E_i)\subset
(\otimes^i(\otimes^m V_m^*))\otimes (\otimes^i(\otimes^m E_i))\notag\\
&= \frac{1}{im!}\sum_{\substack{ht\,\mu\leq
    m\, ,|\mu|=mi}}\,\sum_{\sigma\in \mathfrak{S}_{mi}}\sigma\cdot (v_\mu \otimes v).
\end{align}
Now, $W_\mu$ being self dual, we get that for $\mu \neq m\delta_i$,
\begin{equation}\label{e13}
\sum_{\sigma\in \mathfrak{S}_{mi}}\sigma\cdot (v_\mu \otimes v) =0.
\end{equation}
Moreover, if $v_{m\delta_i}\neq 0$, we claim that
\begin{equation}\label{e14}
\sum_{\sigma\in \mathfrak{S}_{mi}}\sigma\cdot (v_{m\delta_i}\otimes v) \neq 0:
\end{equation}
By projecting to an ireducible component, we can assume that
$M_{m\delta_i} \simeq W_{m\delta_i}.$ Now, take a $\mathfrak{S}_{mi}$-invariant
nondegenerate bilinear form $\alpha: M_{m\delta_i} \otimes M\to \mathbb{C}$. Since
$\alpha$ is $\mathfrak{S}_{mi}$-invariant, $\alpha_{|M_{m\delta_i}^{H_i} \otimes
M^{H_i}}$ remains nondegenerate. Since both of
$v$ and $v_{m\delta_i}$ are $H_i$-invariant, and $[W_{m\delta_i}]^{H_i}$ is one
dimensional, we get $\alpha (v_{m\delta_i} \otimes v)\neq 0.$ Thus,
\begin{align*}
\alpha\bigl(\sum_{\sigma \in \mathfrak{S}_{mi}}\,\sigma\cdot (v_{m\delta_i}
\otimes v)\bigr)&= \sum_{\sigma \in \mathfrak{S}_{mi}}\alpha (v_{m\delta_i} \otimes v)\\
&\neq 0.
\end{align*}
This proves \eqref{e14}. Now, as it is easy to see, $v_{m\delta_i} \neq 0$ if and only if $U_i$ intersects
$\mathcal{I}_{m\delta_i}$ nontrivially. Hence the prposition is proved by
combining the identities \eqref{e12}- \eqref{e14} since $\varphi=\bar{\varphi}'_1
$ (by the identity \eqref{e+}).
\end{proof}

\section{Latin  Squares}

\begin{definition}
Let $1\leq i\leq m$. By a {\em Latin $(i,m)$-rectangle} $A$, one means a
 $i\times m$ matrix
$$
A=\left(a_{p}^q\right)_{\substack{1\leq p\leq i\\ 1\leq q\leq m}}
$$
such that each row $A_{p}:=\{a_{p}^1,\,\ldots,\,a_{p}^m\}$ is a permutation
 $\sigma_{p}$ of $[m]$ (i.e., $\sigma_{p}(q)=a_{p}^q)$
 and each column $A^{q}:=\{a_{1}^q,\,\ldots,\,a_{i}^q\}$ consists of distinct
 numbers. We define the {\it sign} $\epsilon (A^{q})$ of $A^{q}$ as follows:
$$
\epsilon(A^{q}):=\text{~ sign of~}\prod_{1\leq p<p'\leq i}\left(a_{p'}^q-a_{p}^q\right).
$$
Call a Latin rectangle $A$ {\em column-even} if $\epsilon_{c}(A):=\prod\limits^{m}_{q=1}\epsilon(A^{q})$ is $+1$ and {\em column-odd} otherwise.
\end{definition}

Let $\mathcal{A}^{q}$  denote the set $A^{q}$ without regard to the
order. Then, we call the $m$-tuple
$\mathcal{A}=\left(\mathcal{A}^{1},\,\ldots,\,\mathcal{A}^{m}\right)$ the
{\em pattern} of $A$.
Let $L_{\mathcal{A}}$ denote the set of Latin $(i,m)$-rectangles $A$
with pattern $\mathcal{A}$. Let $S(i,m)$ be the set of all patterns of
size $(i,m)$, where by a {\it pattern $\mathcal{A}$ of size $(i,m)$} (or an
$(i,m)$-{\it pattern}) we mean a $m$-tuple
$\mathcal{A}=(\mathcal{A}^1,\,\ldots,\,\mathcal{A}^{m})$ of subsets of $[m]$, each of
cardinality exactly $i$ such that any integer $q\in[m]$ occurs in
exactly $i$-many sets $\mathcal{A}^\bullet$.

For $\mathcal{A}\in S(i,m)$, let $L^+_{\mathcal{A}}$
(resp. $L^{-}_{\mathcal{A}}$) denote the subset of $L_{\mathcal{A}}$
consisting of column even (resp. odd) Latin rectangles.

We have the following simple lemma.

\begin{lemma}\label{lem15}
Fix any $1\leq i\leq m$. Assume that there exists a pattern
$\mathcal{A}$ of size $(i,m)$ such that
$$
\sharp L^{+}_{\mathcal{A}}\neq \sharp L^{-}_{\mathcal{A}}.
$$

Then, for any $1\leq i'\leq i$, there exists a pattern $\mathcal{B}$ of
size $(i',m)$ such that
$$
\sharp L^{+}_{\mathcal{B}}\neq \sharp L^{-}_{\mathcal{B}}.
$$
\end{lemma}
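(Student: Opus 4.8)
The statement asserts that a sign discrepancy $\sharp L^{+}_{\mathcal{A}}\neq\sharp L^{-}_{\mathcal{A}}$ for some $(i,m)$-pattern propagates downward to every $(i',m)$ with $i'\leq i$. By an obvious induction it suffices to treat the single step $i'=i-1$; then iterating gives all $i'\leq i$. So the plan is to fix a pattern $\mathcal{A}=(\mathcal{A}^1,\dots,\mathcal{A}^m)$ with $\sharp L^{+}_{\mathcal{A}}\neq\sharp L^{-}_{\mathcal{A}}$ and produce from it an $(i-1,m)$-pattern $\mathcal{B}$ with $\sharp L^{+}_{\mathcal{B}}\neq\sharp L^{-}_{\mathcal{B}}$.

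The idea is to delete one row. Given a Latin $(i,m)$-rectangle $A$ with pattern $\mathcal{A}$, let $A'$ be the $(i-1)\times m$ matrix obtained by deleting the last row $A_i=\sigma_i$. Then $A'$ is a Latin $(i-1,m)$-rectangle, and its pattern is $\mathcal{B}:=(\mathcal{A}^1\setminus\{\sigma_i(1)\},\dots,\mathcal{A}^m\setminus\{\sigma_i(m)\})$ — which depends on the deleted row $\sigma_i$, not just on $\mathcal{A}$. First I would check that $\mathcal{B}$ is a genuine $(i-1,m)$-pattern: each column set has size $i-1$, and each value $q\in[m]$ still occurs in exactly $i-1$ column sets, because deleting the permutation $\sigma_i$ removes $q$ from exactly one column set (the column $\sigma_i^{-1}(q)$). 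Next, the key sign computation: for a fixed column $q$, writing $A^q=\{a_1^q,\dots,a_i^q\}$ with $a_i^q=\sigma_i(q)$, one has
\[
\epsilon(A^q)=\epsilon\bigl((A')^q\bigr)\cdot(-1)^{\#\{p<i:\,a_p^q>\sigma_i(q)\}},
\]
i.e. removing the last entry changes the column sign by the parity of the number of remaining entries that exceed it. Multiplying over $q$ gives $\epsilon_c(A)=\epsilon_c(A')\cdot\epsilon(\sigma_i,\mathcal{A})$ where $\epsilon(\sigma_i,\mathcal{A}):=\prod_{q=1}^m(-1)^{\#\{p<i:\,a_p^q>\sigma_i(q)\}}$ is a sign depending only on $\sigma_i$ and $\mathcal{A}$ — crucially \emph{not} on the rest of $A'$, since once the pattern $\mathcal{A}$ and the row $\sigma_i$ are fixed, for each $q$ the multiset of entries of $(A')^q$ is fixed (it is $\mathcal{A}^q\setminus\{\sigma_i(q)\}$), hence the count $\#\{p<i: a_p^q>\sigma_i(q)\}$ is determined.

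With that identity in hand the argument is a signed double count. Partition $L_{\mathcal{A}}$ according to the last row $\sigma\in\mathfrak{S}_m$; for each $\sigma$, the fibre of $A\mapsto A'$ is a bijection onto $L_{\mathcal{B}(\sigma)}$ where $\mathcal{B}(\sigma)$ is the induced $(i-1,m)$-pattern, provided $\sigma$ is ``compatible'' with $\mathcal{A}$ (i.e. $\sigma(q)\in\mathcal{A}^q$ for all $q$), and is empty otherwise. Then
\[
\sharp L^{+}_{\mathcal{A}}-\sharp L^{-}_{\mathcal{A}}
=\sum_{\sigma\ \mathrm{compatible}}\epsilon(\sigma,\mathcal{A})\bigl(\sharp L^{+}_{\mathcal{B}(\sigma)}-\sharp L^{-}_{\mathcal{B}(\sigma)}\bigr).
\]
Since the left side is nonzero by hypothesis, at least one summand on the right is nonzero, so $\sharp L^{+}_{\mathcal{B}(\sigma)}-\sharp L^{-}_{\mathcal{B}(\sigma)}\neq 0$ for that $\sigma$; take $\mathcal{B}=\mathcal{B}(\sigma)$. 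This settles the step $i'=i-1$, and induction finishes the proof. The main obstacle — and the only place needing care — is verifying that the correction sign $\epsilon(\sigma,\mathcal{A})$ genuinely factors out of the fibre sum, i.e. that it is constant on each fibre $\{A:A_i=\sigma\}$; this is exactly the observation that, with $\mathcal{A}$ and $\sigma$ fixed, each reduced column $(A')^q$ has a predetermined underlying set, so the ``number of surviving entries above $\sigma(q)$'' does not vary. Everything else (that row/column Latin conditions are preserved, that the fibre map is a bijection onto $L_{\mathcal{B}(\sigma)}$, and the telescoping of signs over columns) is routine.
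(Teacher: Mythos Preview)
Your proof is correct and follows essentially the same approach as the paper: both reduce to $i'=i-1$, delete the last row, and hinge on the observation that the column-sign correction depends only on $\mathcal{A}$ and the deleted row (equivalently, only on the resulting pattern $\mathcal{B}$), so that $\sharp L^{+}_{\mathcal{A}}-\sharp L^{-}_{\mathcal{A}}$ decomposes as a signed sum of the $\sharp L^{+}_{\mathcal{B}}-\sharp L^{-}_{\mathcal{B}}$. The only cosmetic difference is that the paper indexes the fibres by the induced pattern $\mathcal{B}$ rather than by the deleted permutation $\sigma$ (these are in bijection once $\mathcal{A}$ is fixed) and phrases the conclusion as a contrapositive.
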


\begin{proof}
It suffices to prove the lemma for $i'=i-1$.
Define the map $\varphi:L_{\mathcal{A}}\to
\bigsqcup\limits_{\mathcal{B}\in S(i-1,m)}L_{\mathcal{B}}$, by removing
the last row of any Latin rectangle $A$ in $L_{\mathcal{A}}$.
The map $\varphi$ is clearly injective. Moreover, the image of
$\varphi$ consists exactly of the union $\bigsqcup\limits_{\mathcal{B}\in
    S_{\mathcal{A}}(i-1,m)}\,L_{\mathcal{B}},$ where
\[
 S_{\mathcal{A}}(i-1,m):=
\left\{\begin{array}{p{8cm}}
(i-1,m){\rm -patterns $\mathcal{B}$ such that there exists $A\in
  L_{\mathcal{A}}$ with its top $(i-1)$-rows having pattern $\mathcal{B}$}
\end{array}\right\}.
\]
By the definition of $L^{\pm}_{\mathcal{A}}$, it is clear that for any
$\mathcal{B}\in S_{\mathcal{A}}(i-1,m)$, there exists a sign
$\epsilon(\mathcal{B})\in \{\pm 1\}$ such that

\beqn \label{e15} \varphi^{-1}\left(L^{\pm}_{\mathcal{B}}\right)\subset L^{\pm
  \epsilon(\mathcal{B})}_{\mathcal{A}}.
  \eeqn

Assume, if possible, that the lemma is false, i.e.,

\beqn \label{e161} \sharp L^{+}_{\mathcal{B}}=\sharp L^{-}_{\mathcal{B}}\,,\,\,\,
\text{for every $(i-1,m)$-pattern}\,\, \mathcal{B};
\eeqn
 in particular, for any
$\mathcal{B}\in S_{\mathcal{A}}(i-1,m)$.

Combining \eqref{e15} and \eqref{e161}, we get (since $\varphi$ is a bijection) $\sharp
L^{+}_{\mathcal{A}}=\sharp L^{-}_{\mathcal{A}}$.
This contradicts the assumption and hence proves the lemma.
\end{proof}

We recall the following celebrated {\em column Latin $(m,m)$-square conjecture}
 due to Huang-Rota [HR, Conjecture 3].

\begin{conjecture}\label{conj16}
{\it For any positive even integer $m$,
$$\sharp CELS(m)\neq \sharp COLS(m),$$
where $CELS(m)$ (resp. $COLS(m)$) denotes the set of
 column-even Latin
$(m,m)$-squares (resp. column-odd Latin
 $(m,m)$-squares).

(Observe that for Latin $(m,m)$-squares, there is a unique pattern:
$([m],\,[m],\,\ldots,\,[m])$.)}
\end{conjecture}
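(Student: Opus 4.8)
This assertion is the Huang--Rota ``column'' formulation of the celebrated Alon--Tarsi conjecture on the parity of Latin squares, which is still open in general; so a realistic plan has three parts: (i) record the equivalent statements, (ii) execute the polynomial/characteristic--$p$ argument that settles the cases in which $m$ lies next to a prime, and (iii) isolate precisely what is missing for arbitrary even $m$.

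For (i): the integer $\sharp CELS(m)-\sharp COLS(m)$ equals, up to sign, the Alon--Tarsi number $AT(m)=\#\{\text{even Latin squares of order }m\}-\#\{\text{odd Latin squares of order }m\}$, and it also equals an algebraic quantity obtained by coefficient extraction from a graph polynomial --- equivalently from a power of the Vandermonde $\prod_{1\le r<s\le m}(x_{s}-x_{r})=\sum_{\sigma\in\mathfrak{S}_m}\sgn(\sigma)\prod_{r}x_r^{\sigma(r)-1}$, the relevant coefficient singling out exactly the column sequences of Latin $(m,m)$--squares weighted by $\prod_q\epsilon(A^q)$. A further equivalent, internal to the present paper, is that the component of $v_{o}^{\otimes m}$ in the $V_m(m\delta_m)^*$--isotypic block $\mathcal{I}_{m\delta_m}$ of $S^{m}(S^{m}(V^{*}_{m}))$ is nonzero, i.e. $U_m\cap\mathcal{I}_{m\delta_m}\neq 0$; this is Proposition~\ref{prop8} in the case $i=m$, together with the equivalence announced in the Introduction. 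Finally, Lemma~\ref{lem15} shows that once the $(m,m)$ statement holds, a column--imbalanced pattern of size $(i,m)$ exists for every $1\le i\le m$, which is what the applications need.

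For (ii): fix a prime $p$ with $m=p\pm1$ and reduce $AT(m)$ (or the equivalent algebraic quantity) modulo $p$. Over $\mathbb{F}_p$ the exponent bookkeeping simplifies drastically (via $t^{p}=t$), and a suitable group --- the additive or multiplicative group of $\mathbb{F}_p$, or $\mathrm{PGL}_2(\mathbb{F}_p)$ --- acts on the relevant configurations so that every orbit of size divisible by $p$ contributes $0$ modulo $p$ (signs within such an orbit being either constant or cancelling in pairs); hence $AT(m)$ is congruent mod $p$ to a sum over the comparatively few fixed configurations, which can be enumerated explicitly to see that the residue, and a fortiori $AT(m)$, is nonzero. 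This is Drisko's argument for $m=p+1$ and Glynn's for $m=p-1$; combined with (i) it proves the conjecture, and therefore Theorem~\ref{thm3.1} and its corollaries, for every even $m\le 24$.

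The hard part is (iii): general even $m$. When no prime lies at distance one from $m$ there is no residue field carrying a symmetry transitive enough to force $AT(m)\not\equiv 0$; the defining alternating sum has enormous cancellation, so crude size estimates are useless; and no known closed form for $AT(m)$ is manifestly nonzero. My fallback would be the representation--theoretic equivalent of (i): prove directly that the component of $v_{o}^{\otimes m}$ in $\mathcal{I}_{m\delta_m}$ is nonzero, ideally by exhibiting an explicit highest--weight vector of type $m\delta_m$ in $S^{m}(S^{m}(V^{*}_{m}))$ whose pairing with $v_o^{\otimes m}$ does not vanish (the machinery of Proposition~\ref{prop8} and the map $\overline{\varphi}'_{1}$ reduces the entire problem to exactly this nonvanishing). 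I have no such construction for arbitrary even $m$, and it is precisely here --- establishing the nonvanishing in full generality --- that the argument cannot presently be completed. This is the substance of the conjecture, and the reason Theorem~\ref{thm3.1} is stated conditionally.
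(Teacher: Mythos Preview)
The statement is a \emph{conjecture}, not a theorem, and the paper does not prove it; it is stated as an open problem (due to Huang--Rota, equivalent to Alon--Tarsi) and then used as a standing hypothesis for the main results. Your proposal correctly recognizes this status and your survey of the known cases $m=p\pm 1$ (Glynn, Drisko) matches exactly what the paper records in Remark~\ref{rem18}. One small correction: the internal representation-theoretic equivalence you cite --- that the conjecture for a given even $m$ is equivalent to $U_m\cap\mathcal{I}_{m\delta_m}\neq 0$ --- is established not in Proposition~\ref{prop8} alone but in the last part of Theorem~\ref{thm23}, whose proof uses Proposition~\ref{lem13}(a),(b) to show that the pairing $\langle v_o^{\otimes m}, S(B_o)\cdot v_{B_o}\rangle$ literally equals $\sharp CELS(m)-\sharp COLS(m)$ and that every other vector in the isotypic block pairs with $v_o^{\otimes m}$ by a multiple of this same quantity.
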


Combining the above conjecture with Lemma \ref{lem15}, we get the
following proposition.

\begin{proposition}\label{prop17}
Let $m$ be a positive even integer. Assume that the above column Latin
$(m,m)$-square conjecture \ref{conj16} is true.

Then, for any $1\leq i\leq m$, there exists a pattern $\mathcal{A}$ of size
$(i,m)$ such that
$$
\sharp L^{+}_{\mathcal{A}}\neq \sharp L^{-}_{\mathcal{A}}.
$$
\end{proposition}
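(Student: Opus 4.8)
The plan is to derive Proposition~\ref{prop17} directly from Conjecture~\ref{conj16} and Lemma~\ref{lem15}. First I would invoke Conjecture~\ref{conj16}, which says that for a positive even integer $m$ we have $\sharp CELS(m) \neq \sharp COLS(m)$. As noted in the parenthetical remark after the conjecture, for Latin $(m,m)$-squares there is a unique pattern, namely $\mathcal{A}_0 := ([m],[m],\ldots,[m])$, and with respect to this pattern $L^+_{\mathcal{A}_0} = CELS(m)$ and $L^-_{\mathcal{A}_0} = COLS(m)$. Hence the conjecture is exactly the statement that $\sharp L^+_{\mathcal{A}_0} \neq \sharp L^-_{\mathcal{A}_0}$ for the $(m,m)$-pattern $\mathcal{A}_0$, i.e. the hypothesis of Lemma~\ref{lem15} holds with $i = m$.

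Next I would apply Lemma~\ref{lem15} with $i = m$: it asserts that whenever there is some $(i,m)$-pattern $\mathcal{A}$ with $\sharp L^+_{\mathcal{A}} \neq \sharp L^-_{\mathcal{A}}$, then for every $1 \leq i' \leq i$ there is an $(i',m)$-pattern $\mathcal{B}$ with $\sharp L^+_{\mathcal{B}} \neq \sharp L^-_{\mathcal{B}}$. Taking $i = m$ and letting $i'$ range over $1 \leq i' \leq m$, this is precisely the conclusion of Proposition~\ref{prop17} (after renaming $i'$ back to $i$ and $\mathcal{B}$ back to $\mathcal{A}$). So the proof is essentially a two-line chain: unwind the conjecture into the language of patterns, then quote the lemma.

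The only point requiring any care — and really the ``main obstacle,'' though it is minor — is the bookkeeping identification between the set-theoretic objects $CELS(m)$, $COLS(m)$ appearing in the conjecture and the pattern-indexed sets $L^\pm_{\mathcal{A}}$ used in Lemma~\ref{lem15}; one must check that ``column-even'' in the sense of the conjecture matches $\epsilon_c(A) = +1$ in the definition, which is immediate from the definitions since in both cases $\epsilon_c(A) = \prod_{q=1}^m \epsilon(A^q)$ and the pattern of any Latin $(m,m)$-square is forced to be $([m],\ldots,[m])$. I would write this up as follows.

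\begin{proof}
For Latin $(m,m)$-squares the only possible pattern is $\mathcal{A}_0 := ([m],\,[m],\,\ldots,\,[m])$, so that $L_{\mathcal{A}_0}$ is the set of all Latin $(m,m)$-squares and, by the definition of column-even (resp.\ column-odd), $L^+_{\mathcal{A}_0} = CELS(m)$ and $L^-_{\mathcal{A}_0} = COLS(m)$. By the column Latin $(m,m)$-square conjecture \ref{conj16}, $\sharp CELS(m) \neq \sharp COLS(m)$, i.e.
$$
\sharp L^+_{\mathcal{A}_0} \neq \sharp L^-_{\mathcal{A}_0}.
$$
Thus the hypothesis of Lemma \ref{lem15} is satisfied with $i = m$ and $\mathcal{A} = \mathcal{A}_0$. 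Applying Lemma \ref{lem15} with this choice of $i$, we conclude that for every $1 \leq i' \leq m$ there exists a pattern $\mathcal{B}$ of size $(i',m)$ with $\sharp L^+_{\mathcal{B}} \neq \sharp L^-_{\mathcal{B}}$. Renaming $i'$ as $i$ and $\mathcal{B}$ as $\mathcal{A}$ gives the assertion of the proposition.
\end{proof}
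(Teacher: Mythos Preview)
Your proof is correct and follows exactly the approach indicated in the paper, which simply states that the proposition follows by ``combining the above conjecture with Lemma~\ref{lem15}.'' Your write-up fills in the straightforward identification $L^{\pm}_{\mathcal{A}_0} = CELS(m),\ COLS(m)$ for the unique $(m,m)$-pattern $\mathcal{A}_0$ and then applies the lemma, which is precisely what the paper intends.
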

\begin{remark}\label{rem18}
As proved by Huang-Rota [HR, \S 3], their column Latin
$(m,m)$-square conjecture is equivalent to the (full) Latin
$(m,m)$-square conjecture given by Alon-Tarsi [AT]. Now, the
(full) Latin $(m,m)$-square conjecture is valid in the following
cases:
\begin{itemize}
\item[(a)] $m=p-1$, for any odd prime $p$, due to Glynn [G, Theorem
  3.2],

\item[(b)] $m=p+1$, for any odd prime $p$, due to Drisko [D].
\end{itemize}
\end{remark}

We have the following very simple lemma.

\begin{lemma}\label{lem19}
Let $\mathcal{A}$ (resp.  $\mathcal{B}$) be a pattern of type $(i,m)$
(resp.  $(i,m')$) such that
$$
\sharp L^{+}_{\mathcal{A}}\neq \sharp
L^{-}_{\mathcal{A}}\quad\text{and}\quad \sharp L^{+}_{\mathcal{B}}\neq \sharp
L^{-}_{\mathcal{B}}.
$$

Then,
$$
\sharp L^{+}_{(\mathcal{A},\mathcal{B})}\neq \sharp
L^{-}_{(\mathcal{A},\mathcal{B})},
$$
where each entry in $\mathcal{B}$ is shifted by $m$.
\end{lemma}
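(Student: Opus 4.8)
The plan is to set up a sign‑respecting bijection $L_{(\mathcal{A},\mathcal{B})}\xrightarrow{\sim}L_{\mathcal{A}}\times L_{\mathcal{B}}$ and then read off the counting consequence. Note first that $(\mathcal{A},\mathcal{B})$, with each entry of $\mathcal{B}$ shifted by $m$, is indeed an $(i,m+m')$‑pattern: its first $m$ subsets are $\mathcal{A}^1,\ldots,\mathcal{A}^m\subseteq[m]$ and its last $m'$ subsets are $m+\mathcal{B}^1,\ldots,m+\mathcal{B}^{m'}\subseteq\{m+1,\ldots,m+m'\}$, each of cardinality $i$, and every $q\in[m]$ (resp.\ $q\in\{m+1,\ldots,m+m'\}$) occurs in exactly $i$ of them.

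The key observation is that the block decomposition of a Latin rectangle with this pattern is forced. Let $C=(c_p^q)$ be a Latin $(i,m+m')$‑rectangle with pattern $(\mathcal{A},\mathcal{B})$. Each row $C_p$ is a permutation of $[m+m']$; its first $m$ entries lie in $\mathcal{A}^1,\ldots,\mathcal{A}^m\subseteq[m]$, and being $m$ distinct elements of $[m]$ they form a permutation of $[m]$. Likewise the last $m'$ entries of $C_p$, after subtracting $m$, form a permutation of $[m']$. Hence $C=(A\mid m+B)$, where $A:=(c_p^q)_{1\le q\le m}\in L_{\mathcal{A}}$ and $B:=(c_p^{m+r}-m)_{1\le r\le m'}\in L_{\mathcal{B}}$. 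Conversely, any pair $(A,B)\in L_{\mathcal{A}}\times L_{\mathcal{B}}$ reassembles into such a $C$, and these two operations are mutually inverse; this is the bijection.

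Next, multiplicativity of the column sign. For $1\le q\le m$ the $q$‑th column of $C$ is $A^q$, so $\epsilon(C^q)=\epsilon(A^q)$; for $1\le r\le m'$ the $(m+r)$‑th column of $C$ is $m+B^r$, and adding the constant $m$ to every entry leaves the differences $c_{p'}^{m+r}-c_p^{m+r}=b_{p'}^r-b_p^r$ unchanged, so $\epsilon(C^{m+r})=\epsilon(B^r)$. Therefore
$$\epsilon_c(C)=\prod_{q=1}^{m}\epsilon(A^q)\cdot\prod_{r=1}^{m'}\epsilon(B^r)=\epsilon_c(A)\,\epsilon_c(B),$$
so $C$ is column‑even exactly when $A$ and $B$ have the same column parity. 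Splitting $L_{(\mathcal{A},\mathcal{B})}$, $L_{\mathcal{A}}$, $L_{\mathcal{B}}$ into their $\pm$ parts and using the bijection gives
\begin{align*}
\sharp L^{+}_{(\mathcal{A},\mathcal{B})}-\sharp L^{-}_{(\mathcal{A},\mathcal{B})}
&=\bigl(\sharp L^{+}_{\mathcal{A}}\,\sharp L^{+}_{\mathcal{B}}+\sharp L^{-}_{\mathcal{A}}\,\sharp L^{-}_{\mathcal{B}}\bigr)-\bigl(\sharp L^{+}_{\mathcal{A}}\,\sharp L^{-}_{\mathcal{B}}+\sharp L^{-}_{\mathcal{A}}\,\sharp L^{+}_{\mathcal{B}}\bigr)\\
&=\bigl(\sharp L^{+}_{\mathcal{A}}-\sharp L^{-}_{\mathcal{A}}\bigr)\bigl(\sharp L^{+}_{\mathcal{B}}-\sharp L^{-}_{\mathcal{B}}\bigr).
\end{align*}
Both factors on the right are nonzero by hypothesis, hence so is the left side, which is the assertion. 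The only point requiring care is that the block decomposition $C=(A\mid m+B)$ is genuinely forced by the pattern together with the row‑permutation condition; once that is noted, the remainder is routine bookkeeping.
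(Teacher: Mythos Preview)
Your argument is correct and follows the same approach as the paper: establish the concatenation bijection $L_{\mathcal{A}}\times L_{\mathcal{B}}\xrightarrow{\sim}L_{(\mathcal{A},\mathcal{B})}$, observe that it sends $L^{\epsilon_1}_{\mathcal{A}}\times L^{\epsilon_2}_{\mathcal{B}}$ into $L^{\epsilon_1\epsilon_2}_{(\mathcal{A},\mathcal{B})}$, and conclude. You have simply supplied the details the paper leaves implicit, in particular the verification that the block decomposition is forced and the explicit factorization $\sharp L^{+}_{(\mathcal{A},\mathcal{B})}-\sharp L^{-}_{(\mathcal{A},\mathcal{B})}=(\sharp L^{+}_{\mathcal{A}}-\sharp L^{-}_{\mathcal{A}})(\sharp L^{+}_{\mathcal{B}}-\sharp L^{-}_{\mathcal{B}})$.
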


\begin{proof}
Clearly, under the concatenation
$$
L_{\mathcal{A}}\times L_{\mathcal{B}}\xrightarrow{\sim}
L_{(\mathcal{A},\mathcal{B})}.
$$
Moreover, under the above bijection,
$$
L^{\epsilon_{1}}_{\mathcal{A}}\times L^{\epsilon_{2}}_{\mathcal{B}}\to
 L^{\epsilon_{1}\cdot\epsilon_{2}}_{(\mathcal{A},\mathcal{B})},
$$
where $\epsilon_{i}=\pm 1$.
From this the lemma follows.
\end{proof}

\section{Existence of a certain isotypic component in the module generated by
$v_o^{\otimes i}$}

Recall that $V_m=\mathbb{C}^m$ has standard basis $\{v_1, \dots, v_m\}$.
Recall from the identity \eqref{e100},
$$
\mathfrak{v}_{o}:=\dfrac{1}{m!}\sum_{\sigma_{1}\in \mathfrak{S}_{m}}v_{\sigma_1(1)}\otimes\cdots\otimes
v_{\sigma_1(m)}\in S^{m}(V_{m}),
$$
so that, as elements of $S^{i}(S^{m}(V_{m}))$,
$$
\mathfrak{v}^{\otimes
  i}_{o}=\dfrac{1}{(m!)^{i}}\sum_{\sigma=(\sigma_{1},\,\ldots,\,\sigma_{i})\in
  \mathfrak{S}^{i}_{m}}\left(v_{\sigma_1(1)}\otimes\cdots\otimes
v_{\sigma_{1}(m)}\right)\otimes\cdots\otimes
\left(v_{\sigma_{i}(1)}\otimes\cdots\otimes v_{\sigma_{i}(m)}\right).
$$

Let $\lambda$ be a partition of $k$ into at most $m$ parts and let $A$ be a tableau
of shape $\lambda$. As in [GW, Proposition 9.3.7], define
\begin{align}\label{e17}
\XRow A &= \{\sigma\in \mathfrak{S}_{k}:\sigma\text{~ preserves the
  rows of $A$}\},\notag\\
\XCol A &= \{\mu\in \mathfrak{S}_{k}:\mu\text{~ preserves the
  columns of $A$}\},\notag\\
S(A) &= \left(\sum_{\mu\in \XCol A}\epsilon (\mu)\mu\right)\cdot
\sum_{\sigma\in\XRow A}\sigma \,, \,\,\,\text{and}\notag\\
v_{A}&=v_{i_{1}}\otimes\cdots\otimes v_{i_{k}}\in \otimes^k(V_m),
\end{align}
where $i_{j}=r$
if $j$ occurs in the $r$-th row of $A$. (Here $\epsilon (\mu)$ denotes the
sign of $\mu$.)

\begin{exam*}
\begin{gather*}
A=
\begin{array}{|c|c|c|}
\hline
1 & 5 & 9\\
\hline
2 & 6 & 10\\
\hline
3 & 7 & \multicolumn{1}{c}{}\\
\cline{1-2}
4 & 8 & \multicolumn{1}{c}{}\\
\cline{1-2}
\end{array}\\
v_{A}=v_{1}\otimes v_{2}\otimes v_{3}\otimes v_{4}\otimes v_{1}\otimes
v_{2}\otimes v_{3}\otimes v_{4}\otimes v_{1}\otimes v_{2}.
\end{gather*}
\end{exam*}

Consider the tableau $B_{o}=B_{o}(i,m)$ of shape $m\geq m\geq
\cdots\geq m$ ($i$-factors):

\begin{center}
\begin{tabular}{|c|c|c|c|c|}
\hline
1 & 2 & 3 & $\ldots$ & $m$\\
\hline
$m+1$ & $m+2$ & $m+3$ & $\ldots$ & $2m$\\
\hline
$\vdots$ & $\vdots$ & $\vdots$ &$\vdots$ & $\vdots$\\
\hline
$(i-1)m+1$ & $(i-1)m+2$ & $(i-1)m+3$ & $\ldots$ & $im$\\
\hline
\end{tabular}
\end{center}
\vskip2ex

\begin{proposition}\label{prop20}
For any $1\leq i\leq m$ and even $m$,
$$
\left\langle v^{\otimes i}_{o},\,S(B_{o})\cdot\mathfrak{v}^{\otimes
  i}_{o}\right\rangle =\left(\frac{1}{m!}\right)^i\sum_{\mathcal{A}\in S(i,m)}(\sharp
L^{+}_{A}-\sharp L^{-}_{A})^{2},
$$
where $\langle\,,\,\rangle$ is the standard pairing between
$
\otimes^{i}(\otimes^{m}(V^{*}_{m}))$,
$\otimes^{i}(\otimes^{m}V_{m})$ and $v_o\in S^{m}(V^{*}_{m}) \subset
\otimes^{m}(V^{*}_{m})$ is defined by the identity \eqref{e6}.
\end{proposition}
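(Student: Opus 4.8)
The plan is to expand both sides of the claimed identity explicitly and match terms combinatorially. First I would write $S(B_o)\cdot \mathfrak{v}_o^{\otimes i}$ out: since $\mathfrak{v}_o^{\otimes i}=\frac{1}{(m!)^i}\sum_{\sigma\in\mathfrak{S}_m^i} (v_{\sigma_1(1)}\otimes\cdots)\otimes\cdots\otimes(v_{\sigma_i(1)}\otimes\cdots)$, and $\XRow B_o$ permutes within each of the $i$ rows (so it acts by re-indexing the $\sigma_p$'s and is absorbed, producing a factor), the real work is the action of $\sum_{\mu\in\XCol B_o}\epsilon(\mu)\mu$. An element $\mu\in\XCol B_o$ is a tuple $(\mu_1,\dots,\mu_m)$ with $\mu_q\in\mathfrak{S}_i$ permuting the $q$-th entries of the $i$ rows, and $\epsilon(\mu)=\prod_q\epsilon(\mu_q)$. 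Applying this to a basis tensor $v_A$ (for a tableau-like filling recording which row each index sits in) permutes, within each column, the row-labels.

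Next I would pair with $v_o^{\otimes i}$. Recall $v_o=\frac{1}{m!}\sum_{\tau\in\mathfrak{S}_m}\tau\cdot\bar v_o$ where $\bar v_o=v_1^*\otimes\cdots\otimes v_m^*$, so $\langle v_o^{\otimes i}, w_1\otimes\cdots\otimes w_m\otimes\cdots\rangle$ (with each block of length $m$) is nonzero exactly when each length-$m$ block of the tensor $w$ is a permutation of $v_1\otimes\cdots\otimes v_m$, i.e. uses each basis vector once; and then the pairing contributes $(1/m!)^i$ times a sign-free count. Concretely, a term surviving the pairing corresponds to data: for each of the $i$ blocks a permutation $\sigma_p$ of $[m]$ (from $\mathfrak{v}_o^{\otimes i}$), and a column-permutation $\mu=(\mu_q)_q$, such that after $\mu$ acts, block $p$ has become $(v_{\sigma_{\mu_1(p)}(1)},\dots)$... — at this point the bookkeeping reorganizes into: choose for each column $q$ a bijection $[i]\to$ (a set of $i$ distinct values in $[m]$), which is precisely the data of a Latin $(i,m)$-rectangle together with its pattern $\mathcal{A}$, and the sign $\epsilon(\mu)$ becomes $\prod_q\epsilon(A^q)=\epsilon_c(A)$. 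Summing over the surviving configurations groups them by pattern $\mathcal{A}\in S(i,m)$ and within a fixed pattern factors as $\big(\sum_{A\in L_{\mathcal{A}}}\epsilon_c(A)\big)\big(\sum_{A'\in L_{\mathcal{A}}}\epsilon_c(A')\big)=(\sharp L^+_{\mathcal{A}}-\sharp L^-_{\mathcal{A}})^2$, the square arising because both $v_o^{\otimes i}$ and $\mathfrak{v}_o^{\otimes i}$ are symmetrized (the $\XRow B_o$ symmetrization on one side, the definition of $\mathfrak{v}_o^{\otimes i}$ on the other, contribute the two independent Latin rectangles with the same pattern).

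The main obstacle I expect is the careful normalization of the scalar: tracking exactly how the $\frac{1}{(m!)^i}$ from $\mathfrak{v}_o^{\otimes i}$, the $\frac{1}{(m!)^i}$ from the $i$ blocks of $v_o^{\otimes i}$, and the $\XRow B_o$-sum of size $(m!)^i$ interact, so that the net prefactor is exactly $(1/m!)^i$ and not some other power of $m!$. I would handle this by fixing one canonical representative basis tensor, computing its coefficient in $S(B_o)\cdot\mathfrak{v}_o^{\otimes i}$, and then observing that the $\XRow B_o$-action exactly cancels one copy of $(m!)^i$ against the normalization of $\mathfrak{v}_o^{\otimes i}$, leaving the pairing to supply the remaining $(1/m!)^i$. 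A secondary subtlety is checking that terms where some block is \emph{not} a permutation of $[m]$ (equivalently, where the column-permuted filling repeats a row-label within a block) contribute zero to the pairing with $v_o^{\otimes i}$ — this is immediate from the definition of $v_o$ but should be stated. Once the prefactor and the vanishing are pinned down, the identification of surviving terms with pairs of Latin rectangles of equal pattern, and of the sign with $\epsilon_c$, is a direct unwinding of the definitions in Section 4.
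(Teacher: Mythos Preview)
Your approach is the same as the paper's: expand $S(B_o)\cdot\mathfrak{v}_o^{\otimes i}$ as a signed sum over $(\sigma,\mu)\in\mathfrak{S}_m^i\times\mathfrak{S}_i^m$, pair with $v_o^{\otimes i}$, and interpret surviving terms via Latin rectangles grouped by pattern. Two points in your sketch need correction.

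First, the pairing with $v_o^{\otimes i}$ only enforces that each \emph{row} of the matrix $A(\sigma,\mu)=\bigl(\sigma_{\mu_q(p)}(q)\bigr)_{p,q}$ is a permutation of $[m]$; it does not force the columns to have distinct entries. That step needs a separate cancellation argument: if column $q$ has a repeat, say $\sigma_{\mu_q(p)}(q)=\sigma_{\mu_q(p')}(q)$ with $p\neq p'$, then replacing $\mu_q$ by $\mu_q\circ(p,p')$ produces the same tensor with opposite sign, so such terms cancel in pairs. Only after this do the surviving $(\sigma,\mu)$ yield Latin rectangles.

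Second, the square does not arise from ``$v_o^{\otimes i}$ on one side, $\mathfrak{v}_o^{\otimes i}$ on the other,'' nor from $\XRow B_o$ (which, as you correctly say, is absorbed into a scalar $(m!)^i$). Both factors come from the $(\sigma,\mu)$ data: for $\sigma$ with $A(\sigma)$ Latin of pattern $\widehat\sigma$, the map $\mu\mapsto A(\sigma,\mu)$ is a bijection onto $L_{\widehat\sigma}$, and the sign splits as $\epsilon(\mu)=\epsilon_c(A(\sigma))\cdot\epsilon_c(A(\sigma,\mu))$, not simply $\epsilon_c(A)$. Summing first over $\mu$ and then over $\sigma$ within a fixed pattern $\mathcal{A}$ gives the product $\bigl(\sum_{A\in L_{\mathcal A}}\epsilon_c(A)\bigr)^2$. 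Your normalization bookkeeping is then exactly right: $(m!)^i$ from $\XRow B_o$, $(m!)^{-i}$ from $\mathfrak{v}_o^{\otimes i}$, and $(m!)^{-i}$ from the $i$ copies of $v_o$.
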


\begin{proof}
First of all, by the definition of $S(B_{o})$,
\begin{align} \label{e13}
S(B_{o})\cdot \mathfrak{v}_{o}^{\otimes i}&=(m!)^{i}
\sum_{\mu\in \XCol \,B_o}\epsilon (\mu)\,\mu \cdot \mathfrak{v}^{\otimes i}_{o}\notag\\
&= \sum_{\substack{\sigma=(\sigma_{1},\,\ldots,\,\sigma_{i})\in
      \mathfrak{S}^{i}_{m}\\
      \mu=(\mu_{1},\,\ldots,\,\mu_{m})\in
      \mathfrak{S}^{m}_{i}}}\epsilon (\mu) \left(v_{\sigma_{\mu_{1}(1)}(1)}\otimes\cdots\otimes
  v_{\sigma_{\mu_{m}(1)}(m)}\right)\otimes \notag\\
&\quad \left(v_{\sigma_{\mu_{1}(2)}(1)}\otimes\cdots\otimes
  v_{\sigma_{\mu_{m}(2)}(m)}\right)\otimes \cdots \otimes
  \left(v_{\sigma_{\mu_{1}(i)}(1)}\otimes\cdots\otimes
  v_{\sigma_{\mu_{m}(i)}(m)}
  \right),
\end{align}
where $\epsilon(\mu):=\epsilon(\mu_{1})\ldots\epsilon(\mu_{m})$ and
$\mu_{j}$ is embedded in $\mathfrak{S}_{mi}$ as permuting
$\{j,\,j+m,\,\ldots,\,j+(i-1)m\}$ only.

For any $i\times m$ matrix $A=(a_{p,q})_{\substack{1\leq p\leq
    i\\ 1\leq q\leq m}}$ of integers $a_{p,q}\in[m]$, let
$$
V_{A}:=  \left(v_{a_{1,1}}\otimes v_{a_{1,2}}\otimes\cdots\otimes
v_{a_{1,m}}\right)\otimes\cdots\otimes
 \left(v_{a_{i,1}}\otimes v_{a_{i,2}}\otimes\cdots\otimes
v_{a_{i,m}}\right)\in \otimes^{i}(\otimes^{m}V_{m}).
$$
Thus, we can rewrite the identity \eqref{e13} as
$$
S(B_{o})\cdot \mathfrak{v}_{o}^{\otimes
  i}=\sum_{\substack{\sigma\in \mathfrak{S}^{i}_{m}\\ \mu\in
    \mathfrak{S}^{m}_{i}}}\epsilon (\mu)V_{A(\sigma,\mu)}\,,
$$
where $A(\sigma,\mu)$ is the $i\times m$ matrix
$$
A(\sigma,\mu)=
\begin{pmatrix}
\sigma_{\mu_{1}(1)}(1) & \ldots & \sigma_{\mu_{m}(1)}(m)\\
\vdots & & \vdots\\
\sigma_{\mu_{1}(i)}(1) & \ldots & \sigma_{\mu_{m}(i)}(m)
\end{pmatrix}.
$$

We claim that
\begin{equation}\label{e18}
\left\langle v^{\otimes i}_{o},\, S(B_{o})\cdot \mathfrak{v}_o^{\otimes
  i}\right\rangle = \left\langle v^{\otimes
  i}_{o},\,\sum\limits_{(\sigma,\mu)\in R}\epsilon (\mu)
V_{A(\sigma,\mu)}\right\rangle,
\end{equation}
where the last summation runs over $R$ consisting of those
$\sigma=(\sigma_{1},\,\ldots,\,\sigma_{i})\in \mathfrak{S}^{i}_{m}$ and
$\mu=(\mu_{1},\,\ldots,\,\mu_{m})\in \mathfrak{S}^{m}_{i}$ such that
$A(\sigma,\mu)$ is a Latin $(i,m)$-rectangle:

Since $v_{o}$ is, by
definition, $\sum\limits_{\sigma_{1}\in
  \mathfrak{S}_{m}}v^{*}_{\sigma_1(1)}\otimes\cdots\otimes
v^{*}_{\sigma_{1}(m)}$, unless each row of $A(\sigma,\mu)$ is a
permutation of $[m]$, we have $\langle v^{\otimes
  i}_{o},\,V_{A(\sigma,\mu)}\rangle=0$. Further, assume that the entries in
some column of $A(\sigma,\mu)$ are non distinct, say
$$
\sigma_{\mu_{q}(p)}(q)=\sigma_{\mu_{q}(p')}(q),\quad\text{for
  some}\quad 1\leq q\leq m\,\,\,\text{and some}\,\, 1\leq p\neq p'\leq i.
$$
 Let $\tau\in \mathfrak{S}_{i}$ be the
transposition $(p,p')$. Then,
$$
V_{A(\sigma,\mu)}=V_{A(\sigma,\mu')},
$$
where $\mu':=(\mu_{1},\,\ldots,\,\mu_{q}\circ \tau,\,\ldots,\,\mu_{m})$.

Hence,
$$
\epsilon(\mu)V_{A(\sigma,\mu)}+\epsilon(\mu')V_{A(\sigma,\mu')}=0.
$$

This proves the identity \eqref{e18}.

Let $R'\subset \mathfrak{S}^{i}_{m}$ be the subset consisting of
$\sigma=(\sigma_{1},\,\ldots,\,\sigma_{i})$ such that
$$
A(\sigma)=\begin{pmatrix}
\sigma_{1}(1) & \ldots & \sigma_{1}(m)\\
\vdots & & \vdots\\
\sigma_{i}(1) & \ldots & \sigma_{i}(m)
\end{pmatrix}
$$
is a Latin $(i,m)$-rectangle. For any $\sigma\in R'$, let
$\widehat{\sigma}$ be the pattern
$(\widehat{\sigma}^{1},\,\ldots,\,\widehat{\sigma}^{m})$, where
$$
\widehat{\sigma}^{q}:=\{\sigma_{1}(q),\,\ldots,\,\sigma_{i}(q)\}.
$$

Define an equivalence relation on $R'$ by $\sigma\sim \sigma'$ if the
patterns $\widehat{\sigma}=\widehat{\sigma'}$.
Denote the equivalence class containing $\sigma$ by $[\sigma]$.
Then, the sum $\sum\limits_{(\sigma,\mu)\in R}\epsilon
(\mu)V_{A(\sigma,\mu)}$ can clearly be written  as
\begin{align*}
\sum_{\sigma\in R'}\sum_{\substack{\mu\in
    \mathfrak{S}^{m}_{i}:\\ (\sigma,\mu)\in R}}\epsilon
(\mu)V_{A(\sigma,\mu)}&=\sum\limits_{\sigma\in
  R'}\epsilon_{c}(A(\sigma ))\sum_{B\in
  L_{\widehat{\sigma}}}\epsilon_{c}(B)V_{B}\\
& =\sum_{[\sigma]\in R'/\sim}\,\,\sum_{A\in
  L_{\widehat{\sigma}}}\epsilon_{c}(A)\sum_{B\in
  L_{\widehat{\sigma}}}\epsilon_{c}(B)V_{B}\\
 &= \sum_{\mathcal{A}\in S(i,m)}\sum_{A\in
    L_{\mathcal{A}}}\epsilon_{c}(A)\sum_{B\in
    L_{\mathcal{A}}}\epsilon_{c}(B)V_{B}.
\end{align*}

Thus, by the identify \eqref{e18},
\begin{align*}
\langle v^{\otimes i}_{o},\,S(B_{o})\cdot \mathfrak{v}^{\otimes i}_{o}\rangle
&= \left(\frac{1}{m!}\right)^i\,\sum_{\mathcal{A}\in S(i,m)}\,\sum_{A\in
  L_{\mathcal{A}}}\epsilon_{c}(A)\sum_{B\in
  L_{\mathcal{A}}}\epsilon_{c}(B)\\
&= \left(\frac{1}{m!}\right)^i \,\sum_{\mathcal{A}\in S(i,m)}\,\left(\sum_{A\in
  L_{\mathcal{A}}}\epsilon_{c}(A)\right)^{2}\\
&= \left(\frac{1}{m!}\right)^i\,\sum_{\mathcal{A}\in S(i,m)}\left(\sharp L^{+}_{\mathcal{A}}- \sharp
L^{-}_{\mathcal{A}}\right)^{2}.
\end{align*}
This proves the proposition.
\end{proof}

As an immediate consequence of the above proposition, we get:

\begin{corollary}\label{coro21}
$\langle v^{\otimes i}_{o},\,S(B_{o})\cdot \mathfrak{v}^{\otimes
    i}_{o}\rangle\neq0$ if and only if for some pattern
  $\mathcal{A}\in S(i,m)$, $\sharp L^{+}_{\mathcal{A}}\neq \sharp
  L^{-}_{\mathcal{A}}$.
\end{corollary}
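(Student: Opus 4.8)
The plan is to read the statement straight off Proposition \ref{prop20}. That proposition identifies the pairing $\langle v^{\otimes i}_{o},\,S(B_{o})\cdot\mathfrak{v}^{\otimes i}_{o}\rangle$ with the product of the strictly positive scalar $\left(\tfrac{1}{m!}\right)^{i}$ and the finite sum $\sum_{\mathcal{A}\in S(i,m)}\left(\sharp L^{+}_{\mathcal{A}}-\sharp L^{-}_{\mathcal{A}}\right)^{2}$. Each summand is a perfect square of an integer, hence a nonnegative real number, and the sum is over the finite index set $S(i,m)$ (an $(i,m)$-pattern being an $m$-tuple of $i$-element subsets of $[m]$, of which there are only finitely many). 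Therefore the whole expression is a nonnegative real number, and it vanishes if and only if every summand vanishes.

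So I would argue as follows. If $\sharp L^{+}_{\mathcal{A}}=\sharp L^{-}_{\mathcal{A}}$ for every $\mathcal{A}\in S(i,m)$, then by Proposition \ref{prop20} the pairing is $\left(\tfrac{1}{m!}\right)^{i}\cdot 0 = 0$. Conversely, if $\sharp L^{+}_{\mathcal{A}}\neq \sharp L^{-}_{\mathcal{A}}$ for some pattern $\mathcal{A}_{0}\in S(i,m)$, then the term indexed by $\mathcal{A}_{0}$ contributes a strictly positive quantity, and since all other terms are nonnegative, the total sum is strictly positive; multiplying by $\left(\tfrac{1}{m!}\right)^{i}>0$ shows the pairing is nonzero. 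Taking the contrapositive of the first implication and combining with the second gives precisely the asserted equivalence.

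There is essentially no obstacle: the content is entirely contained in Proposition \ref{prop20}, and the remaining step is just the elementary observation that a finite sum of nonnegative reals is zero iff each term is zero, together with the positivity of the normalizing constant $\left(\tfrac{1}{m!}\right)^{i}$. If anything deserves a word, it is only the finiteness of $S(i,m)$, which legitimizes the term-by-term reasoning.
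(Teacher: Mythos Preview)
Your argument is correct and is exactly the approach of the paper, which simply records the corollary as an immediate consequence of Proposition~\ref{prop20} without further comment. The only content beyond that proposition is the observation that a finite sum of squares of integers, scaled by the positive constant $(1/m!)^{i}$, vanishes iff each term does, which you have spelled out.
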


For any partition $\lambda$ of $k$ into at most $m$ parts, let
$G^{\lambda}$ be the highest weight space in $\otimes^{k}(V_{m})$ for
$GL(V_{m})$ corresponding to the highest weight $\lambda$.
Then, we have the following lemma (cf. [GW, Lemma
    9.3.2]).

\begin{lemma} \label{lemma9.3.2} Let $A$ be a tableau of shape $\lambda$. Then,
$S(A)\cdot v_{A}$ is a nonzero element of $G^{\lambda}$. Thus,
$$G^{\lambda} =\sum_{\tau\in \mathfrak{S}_k}\,\mathbb{C} \tau\cdot (S(A)\cdot v_{A}).
$$
\end{lemma}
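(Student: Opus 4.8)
The plan is to write the Young symmetrizer as $S(A)=c(A)\,r(A)$, where $r(A)=\sum_{\sigma\in\XRow A}\sigma$ and $c(A)=\sum_{\mu\in\XCol A}\epsilon(\mu)\,\mu$, and to establish the three assertions — nonvanishing of $S(A)\cdot v_A$, its membership in $G^{\lambda}$, and the spanning statement — one after the other. For the nonvanishing: every box in the $r$-th row of $A$ contributes the same basis vector $v_r$ to $v_A$, so $v_A$ is fixed by $\XRow A$ and $r(A)\cdot v_A=|\XRow A|\,v_A=(\prod_r\lambda_r!)\,v_A$; hence $S(A)\cdot v_A$ is a nonzero scalar times $c(A)\cdot v_A$. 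Regrouping the $k$ tensor slots of $\otimes^{k}(V_m)$ by the columns of $A$, one has $\XCol A=\prod_c \XCol_c$ with $\XCol_c\cong\mathfrak{S}_{h_c}$ permuting the slots of the $c$-th column, which has height $h_c\le ht\,\lambda\le m$ and carries the distinct basis vectors $v_1,\dots,v_{h_c}$. Thus $c(A)\cdot v_A$ is the tensor product over the columns of the full antisymmetrizations $\sum_{\pi\in\mathfrak{S}_{h_c}}\epsilon(\pi)\,v_{\pi(1)}\otimes\cdots\otimes v_{\pi(h_c)}$, each of which is a nonzero multiple of the image of $v_1\wedge\cdots\wedge v_{h_c}$ in $\otimes^{h_c}(V_m)$; a tensor product of nonzero vectors is nonzero.

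Next I would check $S(A)\cdot v_A\in G^{\lambda}$. The weight is immediate: $v_A$ contains $v_r$ with multiplicity $\lambda_r$, so $v_A$ and all its slot-permutations are torus weight vectors of weight $\lambda$. For the highest-weight condition it suffices, by the first step, to show that every raising operator $E_{r,s}\in\mathfrak{gl}(V_m)$ with $r<s$ annihilates $c(A)\cdot v_A$, and by the Leibniz rule this reduces to a single column factor: if $h_c<s$ that factor involves no $v_s$ and is killed, while if $h_c\ge s$ it is a multiple of the embedded $v_1\wedge\cdots\wedge v_{h_c}$, on which $E_{r,s}$ acts as a derivation replacing $v_s$ by $v_r$; since $r<s\le h_c$ the resulting wedge has a repeated entry and vanishes. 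Hence $S(A)\cdot v_A$ is a highest-weight vector of weight $\lambda$, i.e.\ it lies in $G^{\lambda}$.

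Finally, for the spanning statement: with the standard action of $\mathfrak{S}_k$ on tableaux with entries $[k]$ one has $\tau\cdot v_A=v_{\tau A}$ and $\tau\,S(A)\,\tau^{-1}=S(\tau A)$, so $\tau\cdot(S(A)\cdot v_A)=S(\tau A)\cdot v_{\tau A}$, which by the first two steps applied to the shape-$\lambda$ tableau $\tau A$ is again a nonzero element of $G^{\lambda}$. Therefore $P:=\sum_{\tau\in\mathfrak{S}_k}\mathbb{C}\,\tau\cdot(S(A)\cdot v_A)$ is a nonzero $\mathfrak{S}_k$-submodule of $G^{\lambda}$; since the $\mathfrak{S}_k$- and $GL(V_m)$-actions on $\otimes^{k}(V_m)$ commute, $G^{\lambda}$ is $\mathfrak{S}_k$-stable, and under the Schur--Weyl decomposition $\otimes^{k}(V_m)\simeq\bigoplus_{ht\,\mu\le m}W_\mu\otimes V_m(\mu)$ (cf.\ [GW, Theorem 9.1.2]) the highest-weight-$\lambda$ space $G^{\lambda}$ is isomorphic, as an $\mathfrak{S}_k$-module, to the irreducible Specht module $W_\lambda$. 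An irreducible module has no proper nonzero submodule, so $P=G^{\lambda}$. I expect this last inclusion to be the only real point: it rests on the irreducibility of $G^{\lambda}$ under $\mathfrak{S}_k$ (equivalently, on knowing $\dim G^{\lambda}=\dim W_\lambda$). If one wishes to avoid invoking Schur--Weyl here, the alternative is to show directly that the left ideal $\mathbb{C}[\mathfrak{S}_k]\,S(A)$ is a copy of $W_\lambda$ and that $\dim G^{\lambda}=\dim W_\lambda$, so that the $\mathfrak{S}_k$-surjection $\mathbb{C}[\mathfrak{S}_k]\,S(A)\twoheadrightarrow P$ forces $P=G^{\lambda}$.
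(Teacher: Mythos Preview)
Your argument is correct. Note, however, that the paper does not actually prove this lemma: it is stated with the parenthetical ``(cf.\ [GW, Lemma 9.3.2])'' and no proof is given, so there is no ``paper's own proof'' to compare against. What you have written is essentially the standard proof one finds in Goodman--Wallach (or Fulton--Harris): reduce to $c(A)\cdot v_A$ via the row-stabilizer, factor over columns to see nonvanishing and annihilation by raising operators, then use the $\mathfrak{S}_k$-irreducibility of $G^\lambda$ (from Schur--Weyl) to get the span. Your closing remark about the alternative via $\mathbb{C}[\mathfrak{S}_k]\,S(A)\cong W_\lambda$ is also on point and is in fact closer to how [GW] organizes things.
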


We specialize the above lemma to $k=m^2$ and $\lambda$ the partition:
$$
m\delta_m:\,\,\,\,\,\,\,\underbrace{m\geq m\geq\cdots\geq m}_{m\text{-factors}}.
$$
In this case $V_m(m\delta_m)$ is a one dimensional representation of
$GL(V_{m})$.

Consider the tableau $B_o=B_o(m,m)$ (with $i=m$) given just above
Proposition \ref{prop20}. Then,
\begin{align}\label{e21}
S(B_{o})\cdot
v_{B_{o}}=&(m!)^m \sum_{\mu=(\mu_{1},\,\ldots,\,\mu_{m})\in \mathfrak{S}^m_{i=m}}
\epsilon (\mu) \left(v_{\mu_{1}(1)}\otimes
v_{\mu_{2}(1)}\otimes \cdots\otimes
v_{\mu_{m}(1)}\right)\otimes\notag\\
& \left(v_{\mu_{1}(2)}\otimes v_{\mu_{2}(2)}\otimes
\cdots\otimes v_{\mu_{m}(2)}\right)\otimes
 \cdots\otimes \left(v_{\mu_{1}(m)}\otimes
v_{\mu_{2}(m)}\otimes\cdots\otimes v_{\mu_{m}(m)}\right).
\end{align}

By the above lemma, the isotypic component $G^{\lambda}$ of
$\otimes^m (\otimes^m\,V_m)$ for the partition $\lambda = m\delta_m$ is
the span
of
$$
\left\{\tau\cdot (S(B_{o})\cdot v_{B_{o}}):\tau\in \mathfrak{S}_{m^2}\right\}.
$$
I thank J. Landsberg for the (b)-part of the following proposition.
\begin{proposition}\label{lem13}
(a)  $\langle v^{\otimes m}_{o},\,S(B_{o})\cdot
  v_{B_{o}}\rangle =$
  $$\sharp \,CELS(m) - \sharp \, COLS(m),$$
  where $CELS$ and $COLS$ are defined in Conjecture \ref{conj16}.

 (b)
For any $\tau\in \mathfrak{S}_{m^{2}}$,
 $$\langle v^{\otimes m}_{o},\,\tau\cdot (S(B_{o})\cdot
  v_{B_{o}})\rangle=\alpha\langle v^{\otimes m}_{o},\,S(B_{o})\cdot
  v_{B_{o}}\rangle,\,\,\,\text{for some}\,\,\alpha \in \{0,\pm 1\}.
  $$
\end{proposition}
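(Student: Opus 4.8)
The plan is to prove both parts essentially by unwinding the definitions and exploiting the combinatorial structure already established.

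\medskip

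\noindent\textbf{Part (a).} First I would compute $\langle v_o^{\otimes m},\, S(B_o)\cdot v_{B_o}\rangle$ directly from formula \eqref{e21}. Recall $v_o = \frac{1}{m!}\sum_{\sigma_1\in\mathfrak{S}_m} v^*_{\sigma_1(1)}\otimes\cdots\otimes v^*_{\sigma_1(m)}$, so $v_o^{\otimes m}$ pairs nontrivially with a basis tensor $v_{a_{1}}\otimes\cdots$ exactly when each of the $m$ blocks of length $m$ is a permutation of $[m]$, and in that case the pairing equals $(1/m!)^m$ times the number of ways (which is $1$ per block, i.e. exactly $1$ overall since each block determines its $\sigma_1$). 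Applying this to \eqref{e21}, the term indexed by $\mu=(\mu_1,\ldots,\mu_m)\in\mathfrak{S}_m^m$ is the matrix whose $(p,q)$-entry is $\mu_q(p)$; its $q$-th block is $(v_{\mu_1(q)},\ldots,v_{\mu_m(q)})$, which is a permutation of $[m]$ iff $q\mapsto\mu_q$ restricted appropriately gives distinct values, i.e. iff the matrix $(\mu_q(p))$ is a Latin square. Since there is a unique pattern $([m],\ldots,[m])$ for $(m,m)$-squares, and the $(m!)^m$ prefactor cancels the $m!$-normalizations in $v_o^{\otimes m}$ (there are $m$ blocks, each contributing $1/m!$, and... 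I need to check the bookkeeping: $v_o^{\otimes m}$ carries $(1/m!)^m$; $S(B_o)\cdot v_{B_o}$ carries $(m!)^m$ from \eqref{e21}), we get $\langle v_o^{\otimes m},S(B_o)\cdot v_{B_o}\rangle = \sum_{\mu:\text{Latin}} \epsilon(\mu) = \sharp CELS(m) - \sharp COLS(m)$, where $\epsilon(\mu)=\prod_q\epsilon(\mu_q)$ is exactly the column sign $\epsilon_c$. This is the same computation as in the proof of Proposition \ref{prop20} specialized to $i=m$, where the right-hand side becomes $(\sharp L^+_\mathcal{A} - \sharp L^-_\mathcal{A})^2$ with the unique pattern $\mathcal{A}=([m],\ldots,[m])$ — so consistency with that proposition is a useful sanity check, though here we want the unsquared quantity.

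\medskip

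\noindent\textbf{Part (b).} The key observation is that $\langle v_o^{\otimes m},\,\cdot\,\rangle$, viewed as a linear functional on $\otimes^m(\otimes^m V_m)$, is supported on basis tensors $V_A$ (in the notation of the proof of Proposition \ref{prop20}) where $A$ is an $m\times m$ array each of whose rows is a permutation of $[m]$; on such a tensor the value is $(1/m!)^m$ (independent of $A$, up to that fixed scalar), and it is $0$ otherwise. Now $\tau\cdot(S(B_o)\cdot v_{B_o})$ is again a $\pm1$-linear combination of basis tensors $V_A$ (since $S(B_o)\cdot v_{B_o}$ is, from \eqref{e21}, and $\tau$ permutes basis tensors). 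I would argue that the functional $\langle v_o^{\otimes m},\,\cdot\,\rangle$ vanishes on the whole isotypic component $G^{m\delta_m}$ except possibly on a one-dimensional space of $H$-type invariants, or more directly: since $V_m(m\delta_m)$ is one-dimensional, $G^{m\delta_m}$ is an irreducible $\mathfrak{S}_{m^2}$-module (the sign-ish module $W_{m\delta_m}$), and $\langle v_o^{\otimes m},\,\cdot\,\rangle$ restricted to it is either identically zero or spans the dual line; in either case its value on any $\mathfrak{S}_{m^2}$-translate $\tau\cdot w$ of a fixed vector $w=S(B_o)\cdot v_{B_o}$ is determined up to a scalar by its value on $w$. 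To pin the scalar to $\{0,\pm1\}$, I would use that $v_o^{\otimes m}$ is itself (up to scalar) the image of a distinguished vector and that $\mathfrak{S}_{m^2}$ acts on it through the sign character composed with... more concretely: $\tau\cdot(S(B_o)v_{B_o})$ equals, up to sign, another $S(B)\cdot v_B$ for a tableau $B$ in the same $\mathfrak{S}_{m^2}$-orbit shape, and $\langle v_o^{\otimes m},\,S(B)\cdot v_B\rangle$ is $\pm(\sharp CELS - \sharp COLS)$ by relabeling the Latin-square computation of part (a); combined with the possibility that the support conditions force the inner product to vanish, we get $\alpha\in\{0,\pm1\}$.

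\medskip

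\noindent\textbf{Main obstacle.} The delicate point is part (b): showing the proportionality constant is actually an \emph{integer} in $\{0,\pm1\}$ rather than merely some scalar. Abstract representation theory gives proportionality up to an a priori arbitrary scalar (Schur's lemma on the one-dimensional dual of $G^{m\delta_m}$), but to get $\{0,\pm1\}$ one must exhibit the comparison concretely at the level of basis tensors — i.e. track how $\tau$ moves the array $B_o$ and verify that reindexing the sum over Latin squares only introduces an overall sign $\epsilon_c$ of the column permutation induced by $\tau$, or collapses the sum to zero when $\tau$ destroys the "each row a permutation of $[m]$" support condition. Carrying this out requires being careful about how $\tau\in\mathfrak{S}_{m^2}$ acts on the block structure (it need not preserve blocks), so I would first reduce to $\tau$ in the Young subgroup $\mathfrak{S}_m\wr\mathfrak{S}_m=\XRow B_o\cdot(\text{block permutations})$ — on which the action is transparent — and then handle a general $\tau$ by noting that if $\tau$ does not lie in (a coset of) this subgroup stabilizing the support of $v_o^{\otimes m}$, the pairing is forced to be $0$.
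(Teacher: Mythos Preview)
Your treatment of part (a) is correct and matches the paper's argument: pair \eqref{e21} against $v_o^{\otimes m}$, observe that only $\mu$ for which $B(\mu)$ is a Latin square survive, and read off the signed count.

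For part (b), however, there is a genuine gap. You correctly diagnose that abstract Schur-lemma reasoning gives only proportionality, not the integrality $\alpha\in\{0,\pm1\}$, and you propose to recover this by a coset argument on $H_m=\mathfrak{S}_m\wr\mathfrak{S}_m$ together with a ``support'' argument. But the dichotomy is not governed by $H_m$-cosets alone, and the vanishing case is \emph{not} a support phenomenon. The element $\tau\cdot(S(B_o)v_{B_o})$ is a large signed sum over $\mu\in\mathfrak{S}_m^m$, and for a generic $\tau$ many individual terms will still lie in the support of $v_o^{\otimes m}$; what forces $D_\tau=0$ is a cancellation, not an empty intersection with the support.

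The paper's mechanism uses \emph{both} sides of the pairing. Write the $\tau$-translate with double indices so that block $p$, slot $q$ carries $v_{\mu_{i_q^p}(j_q^p)}$. If some block $p$ has $i_a^p=i_b^p=:q_0$ with $a\neq b$, then the column transposition $\theta'=((j_a^p-1)m+q_0,\,(j_b^p-1)m+q_0)\in\XCol B_o$ satisfies $\theta'\cdot(S(B_o)v_{B_o})=-S(B_o)v_{B_o}$, while $\tau\theta'\tau^{-1}$ lands in the row subgroup of block $p$, hence in $H_m$, which fixes $v_o^{\otimes m}$. This yields $D_\tau=-D_\tau$, so $D_\tau=0$. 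In the complementary case (all $i_q^p$ distinct within each block), $H_m$-invariance normalizes the $i$-indices, and then the substitution $\mu_q\mapsto\mu_q\sigma_q$ reduces to $\epsilon(\sigma)$ times the untwisted pairing. Thus the correct picture is a double-coset one, $H_m\backslash\mathfrak{S}_{m^2}/\XCol B_o$ twisted by the sign character on $\XCol B_o$; your sketch invokes only the $H_m$ side and therefore cannot produce the cancellation that gives $\alpha=0$.
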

\begin{proof}
By the identity \eqref{e21},
$$
\langle v^{\otimes m}_{o},\,S(B_{o})\cdot v_{B_{o}}\rangle
=\sum \epsilon ({\bf \mu}),
$$
where the summation runs over those
${\bf {\mu}}=(\mu_{1},\,\ldots,\,\mu_{m})\in \mathfrak{S}_{m}^m$ such
that
$$
B(\mu):=
\begin{pmatrix}
\mu_{1}(1) & \mu_{2}(1) & \cdots & \mu_{m}(1)\\
\mu_{1}(2) & \mu_{2}(2) & \cdots & \mu_{m}(2)\\
\vdots & \vdots & & \vdots\\
\mu_{1}(m) & \mu_{2}(m) & \cdots& \mu_{m}(m)
\end{pmatrix}
$$
is a Latin square (i.e., each row and each column of the above matrix
is a permutation of $[m]$), and
$$
\epsilon ({\bf \mu}):=\epsilon (\mu_{1})\cdots \epsilon (\mu_{m}).
$$
From this the (a)-part follows.

By the expression of $S(B_{o})\cdot v_{B_{o}}$ as in the identity \eqref{e21}, clearly
\begin{align*}
\tau\cdot (S(B_{o})\cdot
v_{B_{o}})  =(m!)^m\,\sum_{{\bf \mu}=(\mu_{1},\,\ldots,\,\mu_{m})\in
  \mathfrak{S}_{m}^m}& \epsilon
({\bf \mu})\left(v_{\mu_{i_{1}^1}(j^1_{1})}\otimes
\cdots\otimes v_{\mu_{i_{m}^1}(j^1_{m})}\right)\otimes\\
&\quad \cdots\otimes
\left(v_{\mu_{i^{m}_{1}}(j^{m}_{1})}\otimes\cdots\otimes
v_{\mu_{i^{m}_{m}}(j^{m}_{m}})\right),
\end{align*}
for some fixed $i^{p}_{q}$ and $j^{p}_{q}\in [m]$.

We claim that if for any $1\leq p\leq m$, $i^{p}_{a}=i^{p}_{b}=:q$ for
some $a\neq b$, then $D_{\tau}=0$, where $D_{\tau}:=\langle v^{\otimes
  m}_{o},\,\tau\cdot (S(B_{o})\cdot v_{B_{o}})\rangle$. Observe that
  $j^{p}_{a} \neq j^{p}_{b}$ since the element
  $$\left(v_{\mu_{i_{1}^1}(j^1_{1})}\otimes
\cdots\otimes v_{\mu_{i_{m}^1}(j^1_{m})}\right)\otimes
\cdots\otimes
\left(v_{\mu_{i^{m}_{1}}(j^{m}_{1})}\otimes\cdots\otimes
v_{\mu_{i^{m}_{m}}(j^{m}_{m}})\right)$$  is a permutation of
 $$\left(v_{\mu_{1}(1)}\otimes
v_{\mu_{2}(1)}\otimes \cdots\otimes
v_{\mu_{m}(1)}\right)\otimes
\left(v_{\mu_{1}(2)}\otimes v_{\mu_{2}(2)}\otimes
\cdots\otimes v_{\mu_{m}(2)}\right)\otimes
 \cdots\otimes \left(v_{\mu_{1}(m)}\otimes
v_{\mu_{2}(m)}\otimes\cdots\otimes v_{\mu_{m}(m)}\right).$$ Consider the
element $\theta=(j^{p}_{a},\,j^{p}_{b})\in \mathfrak{S}_{m}$. Then, replacing
$\mu_{q}$ by $\mu_{q}\theta$ in the above expression of $\tau\cdot
(S(B_{o})\cdot v_{B_{o}})$, we clearly get
$$
D_{\tau}=\epsilon (\theta) D_{\tau}.
$$
Thus, $D_{\tau}=0$.

So, let us assume that for any $1\leq p\leq m$,
$
i^{p}_{a}\neq i^{p}_{b}\quad\text{for}\quad a\neq b.
$
Since $v^{\otimes m}_{o}$ is $H_{m}$-invariant (where $H_m$ is defined above
Proposition \ref{prop8}), to
calculate $D_{\tau}$, we can assume that
$$
\tau\cdot (S(B_{o})\cdot v_{B_{o}})=(m!)^m\,\sum_{{\bf \mu}\in
  \mathfrak{S}^{m}_{m}}\epsilon
({\bf \mu})\left(v_{\mu_{1}(j^1_{1})}\otimes \cdots\otimes
v_{\mu_{m}(j^1_{m})}\right)\otimes \cdots \otimes
\left(v_{\mu_{1}(j^{m}_{1})}\otimes \cdots\otimes
v_{\mu_{m}(j^{m}_{m})}\right),
$$
where, for any $1\leq q\leq m$, $\{j^1_{q},\,\ldots,\,j^{m}_{q}\}$ is a
permutation $\sigma_{q}$ of $[m]$.
Now, replacing $\mu_{q}$ by $\mu_{q}\circ \sigma_{q}$, we get
(setting ${\bf \sigma}=(\sigma_{1},\,\ldots,\,\sigma_{m})$)
\begin{align*}
\tau\cdot (S(B_{o})\cdot v_{B_{o}}) &= \epsilon
({\bf \sigma})(m!)^m\,\sum_{{\bf \sigma}\in \mathfrak{S}^{m}_{m}}\epsilon
({\bf \mu})\left(v_{\mu_{1}(1)}\otimes\cdots\otimes
v_{\mu_{m}(1)}\right)\otimes \cdots \\
&\otimes \left(v_{\mu_{1}(m)}\otimes
\cdots\otimes v_{\mu_{m}(m)}\right)\\
&= \epsilon ({\bf \sigma})\,S(B_{o})\cdot v_{B_{o}}.
\end{align*}
This proves the proposition.
\end{proof}
\begin{theorem} \label{thm23} Let $m$ be an even positive integer and let
 $1\leq i\leq m$. If there exists a pattern $\mathcal{B}$ of size
 $(i,m)$ such that
\beqn \label{e16}
\sharp L^{+}_{\mathcal{B}}\neq \sharp L^{-}_{\mathcal{B}},
\eeqn
then the $\GL(V_m)$-submodule $U_i$ generated by $v^{\otimes i}_{o}
\in S^i(S^m(V_m^*))= [\otimes^{i}(\otimes^{m}V^{*}_{m})]^{H_i}$
intersects the isotypic component $\mathcal{I}_{m\delta_{i}}$ of
$S^i(S^m(V_m^*))$ corresponding to the irreducible  $\GL(V_m)$-module
$V_m(m\delta_i)^*$ nontrivially, where $H_i$ is defined over Proposition
\ref{prop8}.

In particular, if the column Latin $(m,m)$-square conjecture \ref{conj16} is true, then
$U_i\cap \mathcal{I}_{m\delta_{i}} \neq (0)$, for all $1\leq i\leq m$.

For $i=m$, $U_m\cap \mathcal{I}_{m\delta_{m}} \neq (0)$ if and only if
the column Latin $(m,m)$-square conjecture  is true.
\end{theorem}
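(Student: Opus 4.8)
\textbf{Proof plan for Theorem \ref{thm23}.}

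The plan is to combine Proposition \ref{prop8} with the explicit pairing computations of Proposition \ref{prop20} and Proposition \ref{lem13}. By Proposition \ref{prop8}, the statement $U_i \cap \mathcal{I}_{m\delta_i} \neq (0)$ is equivalent to $v_{m\delta_i} \neq 0$ in the decomposition \eqref{e11} of $v_o^{\otimes i}$, i.e. to the nonvanishing of the projection of $v_o^{\otimes i}$ onto the isotypic component of type $V_m(m\delta_i)^*$. Equivalently (passing to the $\otimes^i(\otimes^m V_m)$ side and using that $W_{m\delta_i}$ is self-dual), this projection is nonzero if and only if $v_o^{\otimes i}$ pairs nontrivially with the isotypic component $G^{m\delta_i}$ of $\otimes^i(\otimes^m V_m)$ for the highest weight $m\delta_i$. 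By Lemma \ref{lemma9.3.2} applied to the tableau $B_o = B_o(i,m)$, this isotypic component is the $\mathbb{C}$-span of $\{\tau\cdot(S(B_o)\cdot v_{B_o}) : \tau \in \mathfrak{S}_{m^2}\}$; but here I instead use that $G^{m\delta_i}$ contains $S(B_o)\cdot \mathfrak{v}_o^{\otimes i}$ (which lies in the image of the symmetrizer, hence already in the relevant subspace $S^i(S^m(V_m))$), so it suffices to show $\langle v_o^{\otimes i}, S(B_o)\cdot \mathfrak{v}_o^{\otimes i}\rangle \neq 0$.

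First I would invoke Proposition \ref{prop20}, which gives
$$
\left\langle v_o^{\otimes i},\, S(B_o)\cdot \mathfrak{v}_o^{\otimes i}\right\rangle = \left(\frac{1}{m!}\right)^i \sum_{\mathcal{A}\in S(i,m)} \left(\sharp L^+_{\mathcal{A}} - \sharp L^-_{\mathcal{A}}\right)^2.
$$
Since every term on the right is a nonnegative square, the sum is strictly positive precisely when some pattern $\mathcal{A} \in S(i,m)$ has $\sharp L^+_{\mathcal{A}} \neq \sharp L^-_{\mathcal{A}}$ (this is exactly Corollary \ref{coro21}). Under the hypothesis \eqref{e16} there is such a pattern $\mathcal{B}$, so the pairing is nonzero; hence the projection of $v_o^{\otimes i}$ onto $\mathcal{I}_{m\delta_i}$ is nonzero, which by Proposition \ref{prop8} (or rather its internal equivalence "$v_{m\delta_i}\neq 0$ iff $U_i$ meets $\mathcal{I}_{m\delta_i}$ nontrivially") gives $U_i \cap \mathcal{I}_{m\delta_i}\neq(0)$. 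The "in particular" clause is then immediate from Proposition \ref{prop17}, which supplies such a pattern for every $1\leq i\leq m$ assuming Conjecture \ref{conj16}.

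For the final equivalence at $i=m$, one direction is the special case just proved, using that $S(m,m)$ consists of the single pattern $([m],\ldots,[m])$, so that nonvanishing of the pairing is literally equivalent to $\sharp CELS(m) \neq \sharp COLS(m)$. For the converse I would argue contrapositively: if Conjecture \ref{conj16} fails, then $\sharp CELS(m) = \sharp COLS(m)$, and I must show $U_m \cap \mathcal{I}_{m\delta_m} = (0)$, i.e. $v_{m\delta_m} = 0$. Here the point is that $V_m(m\delta_m)$ is one-dimensional, so $\mathcal{I}_{m\delta_m}$ (the isotypic component in $S^m(S^m(V_m^*))$ of type $V_m(m\delta_m)^*$) is itself a single copy of that one-dimensional module, and by Lemma \ref{lemma9.3.2} the corresponding space $G^{m\delta_m}$ in $\otimes^m(\otimes^m V_m)$ is spanned by the vectors $\tau\cdot(S(B_o)\cdot v_{B_o})$, $\tau\in\mathfrak{S}_{m^2}$. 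By Proposition \ref{lem13}(b), $\langle v_o^{\otimes m}, \tau\cdot(S(B_o)\cdot v_{B_o})\rangle = \alpha_\tau\, \langle v_o^{\otimes m}, S(B_o)\cdot v_{B_o}\rangle$ with $\alpha_\tau \in \{0,\pm1\}$, and by Proposition \ref{lem13}(a) the right-hand scalar equals $\sharp CELS(m) - \sharp COLS(m) = 0$. Hence $v_o^{\otimes m}$ annihilates all of $G^{m\delta_m}$, which forces $v_{m\delta_m}=0$ (by self-duality of $W_{m\delta_m}$ and nondegeneracy of the pairing between $\mathcal{I}_{m\delta_m}$ and $G^{m\delta_m}$), so $U_m \cap \mathcal{I}_{m\delta_m} = (0)$.

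The main obstacle I anticipate is the bookkeeping in identifying the projection of $v_o^{\otimes i}$ onto $\mathcal{I}_{m\delta_i}$ with the pairing against $S(B_o)\cdot \mathfrak{v}_o^{\otimes i}$ in a way that is valid inside the symmetric-power subspaces (not merely in the full tensor powers), and in making sure the self-duality/nondegeneracy argument of Proposition \ref{prop8} applies verbatim here — in particular that $S(B_o)\cdot \mathfrak{v}_o^{\otimes i}$ really generates (or at least meets nontrivially) the image of $G^{m\delta_i}$ under the relevant symmetrizations. Everything else is a direct assembly of Propositions \ref{prop8}, \ref{prop17}, \ref{prop20}, \ref{lem13} and Corollary \ref{coro21}.
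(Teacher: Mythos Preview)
Your proposal is correct and follows essentially the same route as the paper's proof: reduce to $v_{m\delta_i}\neq 0$ via Proposition~\ref{prop8}, detect this by the pairing of Proposition~\ref{prop20}, invoke Proposition~\ref{prop17} (equivalently Lemma~\ref{lem15}) for the ``in particular'' clause, and for $i=m$ use Lemma~\ref{lemma9.3.2} together with both parts of Proposition~\ref{lem13}. One small correction to your bookkeeping: for $i<m$ the vector $S(B_o)\cdot\mathfrak{v}_o^{\otimes i}$ has weight $i(\epsilon_1+\cdots+\epsilon_m)$, not $m\delta_i$, so it does not lie in the highest weight space $G^{m\delta_i}$; what you need (and what the paper cites [GW, Theorem 9.3.10] for) is that it lies in the $V_m(m\delta_i)$-isotypic component of $\otimes^{mi}V_m$, which is enough to conclude $\langle v_o^{\otimes i}, S(B_o)\cdot\mathfrak{v}_o^{\otimes i}\rangle = \langle v_{m\delta_i}, S(B_o)\cdot\mathfrak{v}_o^{\otimes i}\rangle$.
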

\begin{proof} Let $y_o =v_{m\delta_{i}}$ be the component of
$v^{\otimes i}_{o}$ in  $\mathcal{I}_{m\delta_{i}}$  (cf. the identity \eqref{e11}).
Then, as observed in the proof of Proposition \ref{prop8}, $U_i\cap \mathcal{I}_{m\delta_{i}}\neq 0$ if and only
 if $y_o\neq 0$. By [GW, Theorem 9.3.10], $S(B_{o})\cdot\mathfrak{v}^{\otimes
  i}_{o}$ belongs to an irreducible $\GL(V_m)$-submodule of
  $\otimes^{i}(\otimes^{m}V_{m})$ of highest weight $m\delta_i$. Thus,
  \begin{align*}
  \langle y_o,\,S(B_{o})\cdot\mathfrak{v}^{\otimes
  i}_{o}\rangle &= \langle v_o^{\otimes i},\,S(B_{o})\cdot\mathfrak{v}^{\otimes
  i}_{o}\rangle\\
  &=\left(\frac{1}{m!}\right)^{i}\sum_{\mathcal{A}\in S(i,m)}(\sharp
L^{+}_{A}-\sharp L^{-}_{A})^{2}, \,\, \text{by Proposition \ref{prop20}}\\
&\neq 0\,\,\,\text{by the assumption of the theorem.}
\end{align*}
Thus, $y_o\neq 0$, proving the first part of the theorem.

The second `In particular' part of the thoerem, of course, follows from Lemma
\ref{lem15}.

For the last part, by Lemma \ref{lemma9.3.2},
$S(B_o)\cdot v_{B_o}$ is a nonzero highest weight vector of $\otimes^{m}
(\otimes^{m}V_{m})$ with highest weight $m\delta_{m}$ and the isotypic component of $\otimes^{m}
(\otimes^{m}V_{m})$ corresponding to the  highest weight $m\delta_{m}$ is given by
$\sum_{\tau \in \mathfrak{S}_{m^2}}\,\mathbb{C}\,\tau\cdot
\bigl(S(B_o)\cdot v_{B_o}\bigr)$ (since $V_{m}(\delta_m)$ is a one dimensional
representation).

Thus, $y_o\in  \mathcal{I}_{m\delta_{m}}$ is nonzero if and only if
$$\langle v^{\otimes m}_{o},\,x\rangle=\langle y_o,\,x\rangle\neq 0,$$
for some $x\in \sum_{\tau \in \mathfrak{S}_{m^2}}\,\mathbb{C}\,\tau\cdot
\bigl(S(B_o)\cdot v_{B_o}\bigr)$. The above condition is equivalent to the
nonvanishing of
$\langle v^{\otimes m}_{o},\, S(B_o)\cdot v_{B_o}\rangle$ by Proposition
\ref{lem13} (b); which, in turn, is equivalent to the validity of the
 column Latin $(m,m)$-square conjecture (Conjecture \ref{conj16})  by
 Proposition \ref{lem13} (a).
 This proves the theorem.
 \end{proof}
 \begin{remark} It is quite possible that for any $1\leq i\leq m$,
 $U_i\cap  \mathcal{I}_{m\delta_{i}} \neq 0$ if and only if
 the equation \eqref{e16} is satisfied for some pattern
 $\mathcal{B}$ of size $(i,m)$.
 \end{remark}

\section{Statement of the main theorem and its consequences}\label{sec3}

Let $\mathfrak{v}$ be a complex vector space of dimension $m$ and let
$E:=\mathfrak{v}\otimes \mathfrak{v}^{*}=\End \mathfrak{v}$, $Q:=\mathcal{P}^m (E)
\simeq S^{m}(E)^{*}$ (under the isomorphism $\xi$ of Definition \ref{def1}). Consider
$\mathscr{D}\in Q$, where $\mathscr{D}$ is the function taking
determinant of any $A\in E=\End \mathfrak{v}$. The group $G=GL(E)$ acts
canonically on $Q$. Let $\mathcal{X}$ be the $G$-orbit closure of
$\mathscr{D}$ inside $Q$.

Fix a basis $\{v_{1}, \ldots, v_{m}\}$ of $\mathfrak{v}$ and let
$\{v^{*}_{1}, \ldots, v^{*}_{m}\}$ be the dual basis of $\mathfrak{v}^{*}$. Take
the basis $\{v_{i}\otimes v^{*}_{j}\}_{1\leq i,j\leq m}$ of $E$ and
order them as $\{v_{1},v_{2},\ldots,v_{m^{2}}\}$ satisfying
$$
v_{1}=v_{1}\otimes v^{*}_{1},\quad v_{2}=v_{2}\otimes
v^{*}_{2},\ldots,v_{m}=v_{m}\otimes v^{*}_{m}.
$$

{\em Assume that $m$ is even}. Recall from Corollary \ref{coro2.4}
that for any $1\leq i\leq m^{2}$, the irreducible $GL(E)$-module
$V_{E}(m\delta_{i})$ occurs in $S^{i}(S^{m}(E))$ with multiplicity
one (and $V_{E}(m\delta_{i})$ does not occur in any
$S^{j}(S^{m}(E))$, for $j\neq  i$). Let $P_{i}=\gamma_{m,i}\in
S^{i}(S^{m}(E))$ be the highest weight vector of
$V_{E}(m\delta_{i})\subset S^{i}(S^{m}(E))$ (which is unique up to a
nonzero scalar multiple) with respect to the standard Borel subgroup
$B=B_{E}$ of $G$ consisting of upper triangular invertible matrices,
where $GL(E)$ is identified with $GL(m^{2})$ with respect to the
basis $\{v_{1},\ldots,v_{m^{2}}\}$ of $E$ given above. By Lemma
\ref{lem2.1}, in fact $P_i\in [S^{i}(S^{m}(E_i))]^{\SL(E_i)}$, where
$E_i$ is the subspace of $E$ spanned by $\{e_1, \dots, e_i\}$.

Recall an explicit construction of $P_{i}$ from Lemma \ref{lem2.6}.
Since $P_{i}\in S^{i}(S^{m}(E))$, we can
think of $P_{i}$ as a homogeneous polynomial of degree $i$ on the
vector space $Q=S^{m}(E)^{*}$.

The following is our main result.

\begin{theorem}\label{thm3.1}
Assume, as above, that $m$ is even. Assume further that the column Latin
$(m,m)$-square conjecture \ref{conj16} is true. Then, with the above notation, for
any $1\leq i\leq m$, the polynomial $P_{i}$ does {\em not} vanish
identically on $\mathcal{X}$.

In particular, the irreducible $GL(E)$-module $V_{E}(m\delta_{i})$
occurs with multiplicity one in the affine coordinate ring
$\mathbb{C}[\mathcal{X}]$. Moreover, by Corollary \ref{coro2.4},
$V_{E}(d\delta_{i})$, for any $d<m$ and any $1\leq i\leq m^{2}$, does
not occur in $S^{\bigdot}(S^{m}(E))$; in particular, it does not occur
in $\mathbb{C}[\mathcal{X}]$.
\end{theorem}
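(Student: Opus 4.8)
The plan is to reduce the non-vanishing of $P_i$ on $\mathcal{X}$ to a statement already proved in Sections 3 and 5. The key observation is that $P_i = \gamma_{m,i} \in [S^i(S^m(E_i))]^{\SL(E_i)}$, and the morphism $\theta: M(m,i) \to \mathcal{P}^m(E_i) \simeq S^m(E_i)^*$ sending $A \mapsto (\mathscr{D} \odot A)_{|E_i}$ has image contained in $\mathcal{X}$ (since $\mathscr{D} \odot A$ lies in $\mathscr{D} \odot \End E \subset \mathcal{X}$, and restriction to $E_i$ corresponds to a coordinate restriction compatible with the orbit closure). Therefore, if $P_i$ vanishes identically on $\mathcal{X}$, it certainly vanishes on the image $\theta(M(m,i))$, which means $\theta^*(P_i) = 0$ as an element of $\mathcal{P}^{mi}(M(m,i))$. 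So it suffices to show $\theta^*(P_i) \neq 0$.

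Next I would invoke the identification $\theta^* = \varphi$ (up to nonzero scalars in each degree), established in Section 3 via Lemma \ref{lemma3.1} and the surrounding discussion, together with the chain $\varphi = \overline{\varphi}'_1 = \beta \circ \overline{\varphi}_1$. Since $P_i$ spans the one-dimensional space $[S^i(S^m(E_i))]^{\SL(E_i)}$ (Proposition \ref{prop2.3}, Lemma \ref{lem2.6}), the condition $\theta^*(P_i) \neq 0$ is exactly the condition $\varphi_{|[S^i(S^m(E_i))]^{\SL(E_i)}} \neq 0$. By Proposition \ref{prop8}, this holds if and only if the $\GL(V_m)$-submodule $U_i$ generated by $v_o^{\otimes i} \in S^i(S^m(V_m^*))$ meets the isotypic component $\mathcal{I}_{m\delta_i}$ nontrivially. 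Finally, Theorem \ref{thm23} guarantees exactly this intersection property for all $1 \leq i \leq m$, under the hypothesis that the column Latin $(m,m)$-square conjecture holds (using Proposition \ref{prop17}/Lemma \ref{lem15} to pass from the $(m,m)$ case down to the $(i,m)$ case). This completes the proof that $P_i$ does not vanish identically on $\mathcal{X}$.

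For the ``in particular'' assertions: once $P_i \neq 0$ on $\mathcal{X}$, the restriction of $P_i$ to $\mathcal{X}$ is a nonzero element of $\mathbb{C}[\mathcal{X}]$ of degree $i$, and since $P_i$ is a highest weight vector of weight $m\delta_i$, the irreducible $GL(E)$-module $V_E(m\delta_i)$ occurs in $\mathbb{C}[\mathcal{X}]$ with multiplicity at least one. The multiplicity is at most one because $\mathbb{C}[\mathcal{X}]$ is a $G$-subquotient of $S^\bullet(S^m(E))$ (as $\mathcal{X}$ is a $\mathbb{C}^*$-stable closed subvariety of $Q = S^m(E)^*$), and by Corollary \ref{coro2.4}, $V_E(m\delta_i)$ occurs in $S^\bullet(S^m(E))$ with multiplicity exactly one. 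The last sentence about $V_E(d\delta_i)$ for $d < m$ is immediate from Corollary \ref{coro2.4}, since $\mathbb{C}[\mathcal{X}]$ is such a subquotient.

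I expect the main obstacle to be largely bookkeeping rather than a genuine new difficulty: one must carefully verify that $\theta(M(m,i)) \subseteq \mathcal{X}$ and that the identification $\theta^* = \varphi$ respects the grading and the relevant $\SL(E_i)$-invariant subspaces, so that vanishing of $P_i$ on $\mathcal{X}$ really does force $\varphi_{|[S^i(S^m(E_i))]^{\SL(E_i)}} = 0$. The substantive content — the nontriviality of $U_i \cap \mathcal{I}_{m\delta_i}$ — has already been extracted into Theorem \ref{thm23}, so the present theorem is a synthesis of Proposition \ref{prop8}, Theorem \ref{thm23}, and Corollary \ref{coro2.4}, with the Latin square hypothesis entering only through Theorem \ref{thm23}.
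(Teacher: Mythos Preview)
Your proposal is correct and follows essentially the same route as the paper: reduce to $\varphi_{|[S^i(S^m(E_i))]^{\SL(E_i)}}\neq 0$ via the map $\theta$, then invoke Proposition~\ref{prop8} and Theorem~\ref{thm23}. The only point where the paper is more careful is the bookkeeping you flagged: since $\theta$ lands in $S^m(E_i)^*$ rather than in $Q=S^m(E)^*\supset\mathcal{X}$, the paper introduces the lift $\hat{\theta}:M(m,i)\to Q$, $A\mapsto \mathscr{D}\odot\hat{A}$ (with $\hat{A}$ extended by zero on $e_j$ for $j>i$), so that $\operatorname{Im}\hat{\theta}\subset\mathcal{X}$, and then uses that $P_i$ factors through the restriction $r:\mathcal{P}^m(E)\to\mathcal{P}^m(E_i)$ with $\theta=r\circ\hat{\theta}$.
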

\begin{proof} Recall the definition of the right action of the semigroup
End $E$ on $Q=\mathcal{P}^m(E)$ from the identity \eqref{e-1}. Consider the map
$$\hat{\theta}:M(m,i) \to Q, \,\,\, A\mapsto \mathscr{D}\odot \hat{A},$$
where $\hat{A} \in \End \,E$  is defined by
$$\hat{A} e_j= \sum_{p=1}^m a^j_pe_p,\,\,\,\text{for}\,\, 1\leq j \leq i,
\,\,\text{and}\,\, \hat{A} e_j=0, \,\,\,\text{for}\,\, j> i,$$
where $A=(a^j_p)_{1\leq p\leq m, 1\leq j\leq i}.$ Clearly,
$$\text{Im}\, \hat{\theta} \subset \mathcal{X}.$$
To prove that $P_i\in \mathcal{P}^i(Q)\simeq S^{i}(S^{m}(E))$
restricts to a nonzero function on $\mathcal{X}$, it suffices to show that
$P_i$ restricts to a nonzero function on $M(m,i)$ via the morphism
$\hat{\theta}$. Since
$$P_i\in S^{i}(S^{m}(E_i))\simeq \mathcal{P}^i(\mathcal{P}^m(E_i)),$$
$P_i$ is the pull-back of a function $\bar{P}_i\in
\mathcal{P}^i(\mathcal{P}^m(E_i))$ via the restriction map $r:
\mathcal{P}^m(E)\to \mathcal{P}^m(E_i)$. Thus, it suffices to prove that
$\bar{P}_i$ restricts to a nonzero function on $M(m,i)$ via
$\theta: M(m,i) \to \mathcal{P}^m(E_i)\simeq S^m(E_i)^*$ defined as the composite
$\theta= r\circ \hat{\theta}$. (Observe that this $\theta$ coincides with the
 map $\theta$ defined just before Lemma \ref{lemma3.1}.) Now, as observed just
 before Proposition \ref{prop8}, the induced map
 $$\theta^*:  S^\bullet(S^m(E_i))\to \mathcal{P}^{m\bullet}(M(m,i))$$
 coincides with the map $\varphi$. Since $\bar{P}_i$ is the unique
 (up to a nonzero multiple) nonzero element of $[S^{i}(S^{m}(E_i))]^{\SL(E_i)}$,
 it suffices to show that $\varphi_{|[S^{i}(S^{m}(E_i))]^{\SL(E_i)}} \neq 0$. This
 follows from Proposition \ref{prop8} and Theorem \ref{thm23}, and hence the
theorem is proved.
\end{proof}

\begin{corollary}\label{coro3.2}
With the notation and assumptions as in the last theorem (in particular,
assuming the validity of the column Latin
$(m,m)$-square conjecture), for any
dominant integral weight $\lambda$ for $GL(E)$ of the form
$\lambda=\sum^{m}_{i=1} n_i\delta_{i}$, $n_{i}\in \mathbb{Z}_{+}$, the
irreducible $GL(E)$-module $V_{E}(m\lambda)$ occurs in
$\mathbb{C}[\mathcal{X}]$ with nonzero multiplicity.
\end{corollary}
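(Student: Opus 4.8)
The plan is to deduce the corollary from Theorem \ref{thm3.1} by multiplying together the highest weight vectors $P_i$ that were shown there not to vanish on $\mathcal{X}$. Concretely, given $\lambda = \sum_{i=1}^m n_i \delta_i$ with $n_i \in \mathbb{Z}_+$, I would consider the polynomial
$$
P_\lambda := \prod_{i=1}^m P_i^{\,n_i} \in S^{\bullet}(S^m(E)),
$$
viewed as a homogeneous polynomial function on $Q = S^m(E)^*$. Each $P_i$ is a $B$-eigenvector of weight $m\delta_i$ (a highest weight vector for $V_E(m\delta_i)$), so the product $P_\lambda$ is again a $B$-eigenvector, now of weight $m\sum_i n_i \delta_i = m\lambda$. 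Hence $\mathbb{C}\cdot P_\lambda$ spans a copy of the highest weight line of $V_E(m\lambda)$ inside $S^{\bullet}(S^m(E))$, and $P_\lambda$ generates a $GL(E)$-submodule isomorphic to $V_E(m\lambda)$.

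The key point is then to show $P_\lambda$ does not vanish identically on $\mathcal{X}$. Since $\mathcal{X}$ is irreducible (it is the closure of a single $G$-orbit) and $\mathbb{C}[\mathcal{X}]$ is therefore an integral domain, a product of functions vanishes identically on $\mathcal{X}$ if and only if one of the factors does. By Theorem \ref{thm3.1}, none of the $P_i$ (for $1 \le i \le m$) vanishes identically on $\mathcal{X}$, so neither does $P_\lambda$. This means the restriction $P_\lambda|_{\mathcal{X}}$ is a nonzero element of $\mathbb{C}[\mathcal{X}]$; since $\mathbb{C}[\mathcal{X}]$ is a graded $G$-algebra and the restriction map $S^{\bullet}(S^m(E)) = \mathbb{C}[Q] \to \mathbb{C}[\mathcal{X}]$ is $G$-equivariant and surjective, the image of the submodule $V_E(m\lambda) = \langle G \cdot P_\lambda\rangle$ is a nonzero $G$-submodule of $\mathbb{C}[\mathcal{X}]$ isomorphic to a quotient of $V_E(m\lambda)$, hence (by irreducibility) isomorphic to $V_E(m\lambda)$ itself. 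Therefore $V_E(m\lambda)$ occurs in $\mathbb{C}[\mathcal{X}]$ with nonzero multiplicity.

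The only genuinely delicate point is the multiplicativity argument: I need to know that the product $P_\lambda$ is actually nonzero as an element of $S^{\bullet}(S^m(E))$ (so that it spans a genuine highest weight line), and that "restricts to nonzero on $\mathcal{X}$" is preserved under products, which is exactly where irreducibility of $\mathcal{X}$ enters. Both are easy once stated: nonvanishing of $P_\lambda$ on $Q$ itself follows because each $P_i$ is nonzero on $Q$ (indeed the Lemma preceding Lemma \ref{lem2.6} exhibits an explicit point where $\gamma_{m,i}$ is nonzero) and $\mathbb{C}[Q]$ is a domain; and the passage to $\mathcal{X}$ uses that $\mathbb{C}[\mathcal{X}]$ is a domain. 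I expect no serious obstacle beyond bookkeeping; the substance of the corollary is entirely contained in Theorem \ref{thm3.1}, and this is just the standard device of multiplying highest weight vectors for distinct fundamental weights to reach an arbitrary dominant weight in the sub-semigroup generated by $m\delta_1, \dots, m\delta_m$.
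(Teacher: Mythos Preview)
Your proposal is correct and follows essentially the same argument as the paper: multiply the $B$-eigenvectors $P_i$ (or their restrictions $\tilde P_i$ to $\mathcal{X}$) together, and use that $\mathbb{C}[\mathcal{X}]$ is an integral domain (since $\mathcal{X}$ is irreducible) to conclude the product is a nonzero $B$-eigenvector of weight $m\lambda$. The paper's proof is slightly terser, working directly with the restrictions $\tilde P_i \in \mathbb{C}[\mathcal{X}]$ rather than forming the product in $S^\bullet(S^m(E))$ first, but the content is identical.
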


\begin{proof}
First of all, $\mathcal{X}$ being an irreducible variety,
$\mathbb{C}[\mathcal{X}]$ is an integral domain. Take a
$B_{E}$-eigenvector $\tilde{P}_{i}\in \mathbb{C}[\mathcal{X}]$ of
weight $m\delta_{i}$ for any $1\leq i\leq m$; which exists by the last
theorem (assuming the validity of the column Latin
$(m,m)$-square conjecture). Now, consider the function
$$
\tilde{P}_{\lambda}=\prod^{m}_{i=1}\tilde{P}^{n_{i}}_{i}\in
\mathbb{C}[\mathcal{X}].
$$

Clearly, $\tilde{P}_{\lambda}$ is a nonzero $B_{E}$-eigenvector of
weight $m\lambda$. This proves the Corollary.
\end{proof}

Let $\mathcal{X}^{o}$ be the $G$-orbit $G\cdot \mathscr{D}\subset
Q$. Then, by a classical result due to Frobenius
(cf. \cite[Proposition 2.1 and Corollary 2.3]{K}), the
isotropy subgroup $G_{\mathscr{D}}$ of $\mathscr{D}$ is a reductive
subgroup. In particular, by a result of Matsushima, $\mathcal{X}^{o}$
is an affine variety. Moreover, by Frobenius reciprocity, we get the
following.

\begin{proposition}\label{prop3.3}
$\mathbb{C}[\mathcal{X}^{o}]\simeq
\bigoplus_{\lambda}V_{E}(\lambda)\otimes
  [V_{E}(\lambda)^{*}]^{G_{\mathscr{D}}}$ as $G$-modules, where
  the  summation runs over all the dominant integral weights
  $\lambda$ of $G$ (i.e., $\lambda$ runs over $\sum^{m^{2}}_{i=1}
  n_i\delta_{i}$, $n_{i}\in \mathbb{Z}_{+}$ for all $1\leq i<m^{2}$ and
  $n_{m^{2}}\in \mathbb{Z}$) and
  $[V_{E}(\lambda)^{*}]^{G_{\mathscr{D}}}$ denotes the subspace of
  $G_{\mathscr{D}}$-invariants in the dual space
  $V_{E}(\lambda)^{*}$. The action of $G$ on the right side is via its
  standard action on the first factor and it acts trivially on the
  second factor.

In particular, the multiplicity of $V_{E}(\lambda)$ in
$\mathbb{C}[\mathcal{X}^{o}]$ is the dimension of the invariant space
$[V_{E}(\lambda)^{*}]^{G_{\mathscr{D}}}$.
\end{proposition}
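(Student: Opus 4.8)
The plan is to realize $\mathcal{X}^{o}$ as an affine homogeneous space for $G$ and then read off its coordinate ring by Frobenius reciprocity. First I would identify $\mathcal{X}^{o}=G\cdot\mathscr{D}$ with $G/G_{\mathscr{D}}$: the orbit map $g\mapsto g\cdot\mathscr{D}$ factors as a $G$-equivariant bijective morphism $G/G_{\mathscr{D}}\to\mathcal{X}^{o}$, and since we work over $\mathbb{C}$ and $G/G_{\mathscr{D}}$ is smooth, this bijective morphism onto the (smooth, locally closed) orbit is an isomorphism of varieties. Because $G_{\mathscr{D}}$ is reductive by the Frobenius result quoted above, Matsushima's criterion shows that $G/G_{\mathscr{D}}$, hence $\mathcal{X}^{o}$, is affine, and consequently $\mathbb{C}[\mathcal{X}^{o}]=\mathbb{C}[G/G_{\mathscr{D}}]=\mathbb{C}[G]^{G_{\mathscr{D}}}$, the ring of functions on $G$ invariant under right translation by $G_{\mathscr{D}}$, with $G$ acting by left translation.

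Next I would invoke the algebraic Peter--Weyl theorem: as a $(G\times G)$-module for the two-sided regular action,
\[
\mathbb{C}[G]\ \simeq\ \bigoplus_{\lambda}\,V_{E}(\lambda)\otimes V_{E}(\lambda)^{*},
\]
the sum over all dominant integral weights $\lambda$ of $G=GL(E)$ (so $\lambda=\sum_{i=1}^{m^{2}}n_{i}\delta_{i}$ with $n_{i}\in\mathbb{Z}_{+}$ for $i<m^{2}$ and $n_{m^{2}}\in\mathbb{Z}$, as $\delta_{m^{2}}$ is the invertible determinant character), with the left (resp. right) regular action acting through the first (resp. second) tensor factor. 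Taking $G_{\mathscr{D}}$-invariants for the right action commutes with the direct sum---an element of a direct sum is invariant precisely when each of its finitely many components is---and affects only the second factor, giving
\[
\mathbb{C}[\mathcal{X}^{o}]\ \simeq\ \bigoplus_{\lambda}\,V_{E}(\lambda)\otimes\bigl[V_{E}(\lambda)^{*}\bigr]^{G_{\mathscr{D}}}
\]
as $G$-modules, with $G$ acting by its standard action on $V_{E}(\lambda)$ and trivially on $\bigl[V_{E}(\lambda)^{*}\bigr]^{G_{\mathscr{D}}}$. Extracting isotypic components then yields the asserted multiplicity, namely $\dim\bigl[V_{E}(\lambda)^{*}\bigr]^{G_{\mathscr{D}}}$.

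Nearly every step here is formal. The one point requiring genuine care is the passage from ``bijective $G$-morphism $G/G_{\mathscr{D}}\to\mathcal{X}^{o}$'' to ``isomorphism'': this uses that the base field is $\mathbb{C}$ and, more essentially, that $G_{\mathscr{D}}$ is reductive so that $G/G_{\mathscr{D}}$ is affine, since for a non-reductive isotropy group the orbit need not even be affine and the Frobenius-reciprocity description would break down. The remaining ingredients---the algebraic Peter--Weyl decomposition of $\mathbb{C}[G]$ and the exactness of invariants under a reductive group---are standard.
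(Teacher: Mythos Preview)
Your argument is correct and is exactly the Frobenius reciprocity argument the paper invokes (the paper gives no details beyond the sentence ``by Frobenius reciprocity, we get the following''): identify $\mathcal{X}^{o}\simeq G/G_{\mathscr{D}}$, use Matsushima to see it is affine, and apply the algebraic Peter--Weyl decomposition of $\mathbb{C}[G]$ together with right $G_{\mathscr{D}}$-invariants. One small clarification on your closing commentary: the passage from ``bijective $G$-morphism'' to ``isomorphism'' needs only characteristic~$0$ (orbit maps are separable, hence smooth onto their images); the reductivity of $G_{\mathscr{D}}$ is a separate ingredient, needed not for that isomorphism but for the affineness of $\mathcal{X}^{o}$ and hence for the coordinate ring to be computable via $\mathbb{C}[G]^{G_{\mathscr{D}}}$.
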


Considering the action of the centre of $G$, it is easy to see that if
$V_{E}(\lambda)$ occurs in $\mathbb{C}[\mathcal{X}^{o}]$, then
$|\lambda|:=\sum^{m^{2}}_{i=1}i
\,n_i\in m\mathbb{Z}$, where (as earlier)
$\lambda=\sum^{m^{2}}_{i=1}n_{i}\delta_{i}$.

Applying [BLMW, Proposition 5.2.1], we get that for any polynomial representation
$V_{E}(\lambda)$ (i.e., $\lambda=\sum^{m^{2}}_{i=1}n_{i}\delta_{i}$
with all $n_{i}\in \mathbb{Z}_{+}$) with $|\lambda|=md$, $d\in
\mathbb{Z}_{+}$,
\begin{equation}\label{e65}
\dim \left[V_{E}(\lambda)^{*}\right]^{G_{\mathscr{D}}}=
sk_{\overline{\lambda}, d{\delta}_{m},d{\delta}_{m}},
\end{equation}
where ${\delta}_{m}$ (as earlier) is the partition
${\delta}_{m}: (1\geq 1\geq \cdots \geq 1)$ ($m$ factors),
$\overline{\lambda}$ is the partition $(n_{1}+\cdots+n_{m^{2}}\geq
n_{2}+\cdots+n_{m^{2}}\geq n_{3}+\cdots+n_{m^{2}}\geq\cdots \geq
n_{m^{2}}\geq 0)$ and
$sk_{\overline{\lambda}, d{\delta}_{m},d{\delta}_{m}}$
is the symmetric Kronecker coefficient (i.e., the multiplicity of the irreducible
$\mathfrak{S}_{dm}$-module $W_{\overline{\lambda}}$ in the symmetric  product
$S^2(W_{d{\delta}_{m}})$, where, as earlier,
$W_{\overline{\lambda}}$ denotes the irreducible $\mathfrak{S}_{dm}$-module
corresponding to the partition $\overline{\lambda}$).

As a corollary of the equation \eqref{e65}, and Proposition \ref{prop3.3}, we
get the following (since $\mathbb{C}[\mathcal{X}]\hookrightarrow
\mathbb{C}[\mathcal{X}^{o}]$ is a $G$-module).

\begin{corollary}\label{coro3.4}
For any irreducible polynomial representation $V_{E}(\lambda)$ of $G$,
such that $|\lambda|=dm$, for $d\in \mathbb{Z}_{+}$, the multiplicity
$\mu(\lambda)$ of $V_{E}(\lambda)$ in $\mathbb{C}[\mathcal{X}]$ is
bounded by:
$$
\mu(\lambda)\leq sk_{\overline{\lambda}, d{\delta}_{m},
d{\delta}_{m}}.
$$
\end{corollary}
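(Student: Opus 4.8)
The goal is a routine consequence of Proposition \ref{prop3.3} together with the identity \eqref{e65}, so the plan is to chain these two results and then observe that $\mathbb{C}[\mathcal{X}]$ embeds inside $\mathbb{C}[\mathcal{X}^o]$ as a $G$-submodule. First I would recall that $\mathcal{X} = \overline{G\cdot \mathscr{D}}$ contains the open orbit $\mathcal{X}^o = G\cdot\mathscr{D}$ as a dense (hence non-empty Zariski-open) subset, so restriction of regular functions gives an injective $G$-equivariant algebra homomorphism $\mathbb{C}[\mathcal{X}]\hookrightarrow \mathbb{C}[\mathcal{X}^o]$. Consequently, for every irreducible $G$-module $V_E(\lambda)$ the multiplicity $\mu(\lambda)$ of $V_E(\lambda)$ in $\mathbb{C}[\mathcal{X}]$ is at most its multiplicity in $\mathbb{C}[\mathcal{X}^o]$.

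Next I would invoke Proposition \ref{prop3.3}, which by Frobenius reciprocity identifies the multiplicity of $V_E(\lambda)$ in $\mathbb{C}[\mathcal{X}^o]$ with $\dim\big[V_E(\lambda)^*\big]^{G_{\mathscr{D}}}$. Then, for a polynomial representation $V_E(\lambda)$ with $|\lambda| = dm$, the identity \eqref{e65} (quoted from [BLMW, Proposition 5.2.1]) evaluates this dimension as the symmetric Kronecker coefficient $sk_{\overline{\lambda},\,d\delta_m,\,d\delta_m}$, where $\overline{\lambda}$ is the partition obtained from $\lambda = \sum_i n_i\delta_i$ by partial summation of the $n_i$'s as described above the corollary. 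Combining the inequality $\mu(\lambda) \leq (\text{mult.\ in }\mathbb{C}[\mathcal{X}^o])$ with these two equalities yields exactly $\mu(\lambda) \leq sk_{\overline{\lambda},\,d\delta_m,\,d\delta_m}$, which is the assertion.

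There is essentially no obstacle here: every ingredient has already been established or cited in the excerpt, and the only genuinely new observation is the (standard) fact that restriction from an irreducible affine variety to a dense open subvariety is an injection of coordinate rings, which is $G$-equivariant when the open set is $G$-stable. One minor point worth stating explicitly is that the hypothesis $|\lambda| = dm$ is needed both to land in the range where \eqref{e65} applies and, as noted in the paragraph before the corollary via the action of the centre of $G$, to ensure $V_E(\lambda)$ can occur in $\mathbb{C}[\mathcal{X}^o]$ at all; outside that congruence the multiplicity is zero and the bound is vacuous. So the write-up is just: cite the embedding, cite Proposition \ref{prop3.3}, cite \eqref{e65}, conclude.
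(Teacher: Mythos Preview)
Your proposal is correct and matches the paper's own argument essentially verbatim: the paper derives the corollary in one line from equation \eqref{e65} and Proposition \ref{prop3.3}, together with the $G$-equivariant injection $\mathbb{C}[\mathcal{X}]\hookrightarrow \mathbb{C}[\mathcal{X}^{o}]$ coming from restriction to the open orbit. Your additional remarks about why restriction is injective and about the role of the hypothesis $|\lambda|=dm$ are fine elaborations but not strictly needed.
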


Observe that unless $V_{E}(\lambda)$ is a polynomial representation of
$G$ and $|\lambda|\in m\mathbb{Z}_{+}$, we have $\mu(\lambda)=0$.

As an immediate consequence of Corollaries \ref{coro3.2} and
\ref{coro3.4}, we get the following.

\begin{corollary}\label{coro3.5}
Let $m$ be any positive even integer. Assume that the column Latin
$(m,m)$-square conjecture is true. Then, for any partition
$\overline{\lambda}: \left(\overline{\lambda}_{1}\geq
\overline{\lambda}_{2}\geq \cdots\geq \overline{\lambda}_{m}\geq
0\right) $ (with at most $m$ parts) of $d$
(i.e., $|\overline{\lambda}|=d)$, the symmetric Kronecker coefficient
$$
sk_{m\overline{\lambda}, d{\delta}_{m},d{\delta}_{m}} > 0.
$$
\end{corollary}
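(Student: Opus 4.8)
The plan is to combine Corollary \ref{coro3.2} (existence of $V_E(m\lambda)$ in $\mathbb{C}[\mathcal{X}]$) with Corollary \ref{coro3.4} (upper bound on multiplicities by symmetric Kronecker coefficients); the whole argument is then just bookkeeping about the dictionary between partitions and $GL(E)$-weights.

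First I would write the given partition $\overline{\lambda}: \overline{\lambda}_1 \geq \dots \geq \overline{\lambda}_m \geq 0$ in the fundamental-weight basis: setting $\overline{\lambda}_{m+1} := 0$ and $n_i := \overline{\lambda}_i - \overline{\lambda}_{i+1} \in \mathbb{Z}_+$ for $1 \leq i \leq m$, one has $\overline{\lambda} = \sum_{i=1}^m n_i\delta_i$ as a dominant weight of $GL(E)$ supported on the first $m$ fundamental weights, and hence $m\overline{\lambda} = \sum_{i=1}^m (m n_i)\delta_i$. This is exactly the shape of weight to which Corollary \ref{coro3.2} applies (with $\lambda$ there taken to be $\overline{\lambda}$). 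So, assuming the column Latin $(m,m)$-square conjecture, Corollary \ref{coro3.2} gives that $V_E(m\overline{\lambda})$ occurs in $\mathbb{C}[\mathcal{X}]$ with multiplicity $\mu(m\overline{\lambda}) \geq 1$.

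Next I would feed $V_E(m\overline{\lambda})$ into Corollary \ref{coro3.4}. Since $|m\overline{\lambda}| = m\,|\overline{\lambda}| = md$, the hypothesis $|\lambda| = dm$ of that corollary is satisfied with this same $d$, yielding $\mu(m\overline{\lambda}) \leq sk_{\overline{(m\overline{\lambda})},\, d\delta_m,\, d\delta_m}$, where $\overline{(\cdot)}$ denotes the partition attached to a weight as in Proposition \ref{prop3.3}. It then remains only to identify $\overline{(m\overline{\lambda})}$ with $m\overline{\lambda}$: writing $m\overline{\lambda} = \sum_{i=1}^{m^2} n'_i\delta_i$ with $n'_i = m n_i$ for $i \leq m$ and $n'_i = 0$ otherwise, the $j$-th part of the associated partition is $\sum_{i\geq j} n'_i = m\sum_{i=j}^m(\overline{\lambda}_i - \overline{\lambda}_{i+1}) = m\overline{\lambda}_j$, as claimed.

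Combining the two inequalities gives $0 < \mu(m\overline{\lambda}) \leq sk_{m\overline{\lambda},\, d\delta_m,\, d\delta_m}$, hence $sk_{m\overline{\lambda},\, d\delta_m,\, d\delta_m} > 0$, which is the assertion. There is no genuine obstacle here beyond checking the partition/weight correspondence above and the divisibility $|m\overline{\lambda}| \in m\mathbb{Z}_+$ required to invoke Corollary \ref{coro3.4}; all the substance is already carried by Corollaries \ref{coro3.2} and \ref{coro3.4}, and ultimately by Theorem \ref{thm3.1}.
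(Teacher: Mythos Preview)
Your proposal is correct and follows exactly the route the paper takes: the paper simply states that Corollary \ref{coro3.5} is ``an immediate consequence of Corollaries \ref{coro3.2} and \ref{coro3.4}'', and your argument is precisely that, with the partition/weight bookkeeping spelled out correctly.
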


\begin{remark}\label{rem5.4}
(a) Compare the above corollary with \cite[Theorem 1, \S\ 3]{BCI}.

(b) The
following  generalization of Theorem \ref{thm3.1} holds by exactly
the same proof.

   Let $\mathscr{F} \in Q=S^m(E^*)$ be any (homogeneous) polynomial such that
   writing $\mathscr{F}$ as a sum of monomials (in a basis of $E^*$), some
   monomial with no repeated factors occurs with nonzero
   coefficient. Assume further that the column Latin
$(m,m)$-square conjecture \ref{conj16} is true. Then, for even
 $m$ and any $1\leq i\leq m$,
   the polynomial
  $P_{i}$ does not vanish identically on the orbit $GL(E)\cdot
  \mathscr{F}$.

  In particular, this remark applies to $\mathscr{F}=\mathfrak{P}$,
  where $\mathfrak{P}$ is the function $E\to \mathbb{C}$
  taking any matrix $A\in E:=\End \mathfrak{v}$ to its permanent.

Thus, the irreducible $GL(E)$-module $V_{E}(m\delta_{i})$
occurs with multiplicity one in
$\mathbb{C}[\overline{GL(E)\cdot\mathfrak{P}}]$ for any $1\leq i\leq
m$ (assuming the validity of the column Latin
$(m,m)$-square conjecture). Moreover, $V_{E}(d\delta_{i})$, for any $d<m$ and $1\leq i\leq
m^{2}$ does not occur in $\mathbb{C}[\overline{GL(E)\cdot \mathfrak{P}}]$
(cf. Corollary \ref{coro2.4}).
\end{remark}

\vskip3ex
\noindent
S.K.: Department of Mathematics, University of North Carolina,
Chapel Hill, NC 27599-3250, USA (email: shrawan$@$email.unc.edu)


\begin{thebibliography}{}



\bibitem[AT]{AT} N. Alon and M. Tarsi: Colorings and orientations of
  graphs, {\it Combinatorica} {\bf 12} (1992), 125-134.

\bibitem[BCI]{BCI} P. \ B\"urgisser, M. Christandl and C. Ikenmeyer:
  Nonvanishing of Kronecker coefficients for rectangular
  shapes, Preprint (2010) (arXiv: 0011145).

  \bibitem[BLMW]{BLMW} P. \ B\"urgisser, J.M. Landsberg, L. Manivel and
  J. Weyman: An overview of mathematical issues arising in the geometric
  complexity theory approach to $VP\neq VNP$, {\it SIAM J. Comput.} {\bf 40} (2011),
  1179-1209.

\bibitem[D]{D} A.A. Drisko: On the number of even and odd Latin squares
  of order $p+1$, {\it Adv. Math.} {\bf 128} (1997), 20-35.

\bibitem[FH]{FH} W. \ Fulton and \ J. \ Harris:  ``Representation Theory'',
  Graduate Texts in Mathematics, Vol. 129, Springer-Verlag (1991).


\bibitem[G]{G} D. Glynn: The conjecture of Alon-Tarsi and Rota in
  dimension prime minus one, {\it SIAM J. Discrete Math.} {\bf 24} (2010),
  394-399.

 \bibitem[GW]{GW} R. Goodman
    and N. Wallach: ``Symmetry, Representations and Invariants'', Graduate Texts
    in Mathematics, Vol.
    255, Springer (2009).

\bibitem[H]{H} R. \ Howe: $(GL_{n}, GL_{m})$-duality and symmetric
  plethysm, {\it Proc. Indian Acad. Sci (Math. Sci)} {\bf 97} (1987),
  85-109.

\bibitem[HR]{HR} R. Huang and G-C. Rota: On the relations of various
  conjectures on Latin squares and straightening coefficients,
  {\it Discrete Math.} {\bf 128} (1994), 225-236.

\bibitem[Ko]{Ko} K. Kostant: On Macdonald's $\eta$-function formula, the Laplacian
and generalized exponents, {\it Adv. Math.} {\bf 20} (1975), 257-285.

\bibitem[K]{K} S.\  Kumar: Geometry of orbits of permanents and
  determinants. Preprint (2010)(arXiv: 1007-1695), (to appear in {\it Comment.
  Math. Helv.}).

\bibitem[MS1]{MS1} K. \ Mulmuley and M. \ Sohoni: Geometric complexity
  theory. I. An approach to the P vs. NP and related problems, {\it SIAM
  J. Comput.} {\bf 31} (2001), 496-526.

\bibitem[MS2]{MS2} K.\  Mulmuley and M. \ Sohoni: Geometric complexity theory
  II : Towards explicit obstructions for embeddings among class
  varieties, {\it SIAM J. Comput.} {\bf 38} (2008), 1175-1206.

\bibitem[V]{V} L.G.\ Valiant: Completeness classes in algebra, {\it Conference
  Record of the Eleventh Annual ACM Symposium on Theory of Computing}
  (Atlanta, Ga., 1979), ACM, New York, 1979, 249-261.

\end{thebibliography}
\end{document}